\documentclass{article}

\usepackage{graphicx} % Required for inserting images
\usepackage{amssymb,amsmath,amsfonts,amsthm,lineno}
\usepackage[mathscr]{euscript}
\usepackage{latexsym}
\usepackage{graphics}
\usepackage{indentfirst}
\usepackage[colorlinks=true,allcolors=blue]{hyperref}
\usepackage[capitalise, noabbrev, nameinlink]{cleveref}
\usepackage{comment}
\usepackage[shortlabels]{enumitem}
\usepackage[english]{babel}
\usepackage{tikz}
\usetikzlibrary{fit,arrows.meta,backgrounds}
\usetikzlibrary{shapes,decorations,graphs,graphs.standard,quotes,backgrounds}
\usetikzlibrary{decorations.markings}
\usepackage{esdiff}
\usepackage{thmtools}
\usepackage{thm-restate}
\usepackage{enumitem}
\makeatletter
\renewcommand{\@fnsymbol}[1]{\number#1}
\makeatother

\setlist[enumerate,1]{label=(\roman*)}

\setlist[enumerate,2]{leftmargin=3em}
\definecolor{azure(colorwheel)}{rgb}{0.0, 0.5, 1.0}
\definecolor{Pink}{RGB}{255, 105, 180}
\definecolor{LSUP}{RGB}{70,29,124}
\definecolor{LSUG}{RGB}{253,208,35}
\definecolor{AUBO}{RGB}{221,85,12}
\definecolor{AUBN}{RGB}{3,36,77}
\definecolor{BGBlue}{RGB}{0,147,210}
\definecolor{NTYTeal}{RGB}{0,165,151}
\definecolor{CampusGreen}{RGB}{78,128,32}
\definecolor{SunkissedYellow}{RGB}{255,192,68}
\definecolor{purple}{RGB}{138
,43,226}
\definecolor{USABlue}{RGB}{0,32,91}
\definecolor{USARed}{RGB}{191,13,62}
\definecolor{USALightBlue}{RGB}{0,169,224}
\definecolor{USAPink}{RGB}{255,128,139}
\definecolor{USATeal}{RGB}{110,206,178}
\definecolor{USAOrange}{RGB}{255,175,2}
\definecolor{USALightGreen}{RGB}{206,213,104}
\definecolor{USADarkGreen}{RGB}{94,103,56}
\definecolor{VandyGold}{HTML}{D8AB4C}
\newtheorem*{theorem*}{Theorem}
\newtheorem{theorem}{Theorem}
\newtheorem{lemma}[theorem]{Lemma}
\newtheorem{pro}[theorem]{Proposition}
\newtheorem{obs}[theorem]{Observation}

\newtheorem{conj}[theorem]{Conjecture}

\crefname{thm}{Theorem}{Theorems}
\crefname{lemma}{Lemma}{Lemmas}

\newcommand{\N}{\mathbb{N}}

\allowdisplaybreaks

\begin{document}

\title{Cones of Theta Graphs are Enumeratively Chromatic-Choosable}
\author{ 
Kingston Barnes \thanks{School of Computing, University of South Alabama, Mobile, AL, USA (kb2436@jagmail.southalabama.edu)}
\and
Vincent Fesenmeier \thanks{McGill-Toolen Catholic High School, Mobile, AL, USA (vincentfesenmeier11@gmail.com)}
\and
Braxton Gaither \thanks{Department of Mathematics and Statistics, University of South Alabama, Mobile, AL, USA (bhg2421@jagmail.southalabama.edu )}
\and
Michael Guthrie \thanks{Department of Mathematics and Statistics, University of South Alabama, Mobile, AL, USA (mag2324@jagmail.southalabama.edu )}
\and
Leonardo Marciaga \thanks{Department of Applied Mathematics, Illinois Institute of Technology, Chicago, IL, USA (lmarciaga@hawk.illinoistech.edu)}
\and
Lucas Miller \thanks{Alabama School of Math and Science, Mobile, AL, USA (lucaskmiller12007@gmail.com)}
\and
Fennec Morrissette \thanks{Department of Biomedical Sciences, University of South Alabama, Mobile, AL, USA (ajm2228@jagmail.southalabama.edu )}
\and
Jeffrey A. Mudrock \thanks{Department of Mathematics and Statistics, University of South Alabama, Mobile, AL, USA (mudrock@southalabama.edu)}
\and
Gavin Nguyen  \thanks{Alabama School of Math and Science, Mobile, AL, USA (Gavin.Nguyen@asms.net)}
\and
Rose Zellner \thanks{Alabama School of Math and Science, Mobile, AL, USA (Sueann.Zellner@asms.net)}
}

\maketitle

\begin{abstract}
Chromatic choosability is a fundamental notion in list coloring. A graph $G$ is \emph{chromatic-choosable} if its chromatic number $\chi(G)$ equals its list chromatic number $\chi_{\ell}(G)$. In 1990, Kostochka and Sidorenko introduced the list color function of a graph $G$, denoted by $P_{\ell}(G,m)$, which is the list-coloring analogue of its chromatic polynomial $P(G,m)$. A graph $G$ is \emph{enumeratively chromatic-choosable} if $P_{\ell}(G,m)=P(G,m)$ for every $m \in \N$. In 2002, Ohba proved that for every graph $G$, there exists an $N \in \N$ such that $K_p \vee G$ is chromatic-choosable whenever $p \geq N$. Allred and Mudrock recently conjectured that an analogous statement holds for enumerative chromatic choosability. In this paper, we prove that $K_1 \vee G$ is enumeratively chromatic-choosable whenever $G$ is a theta graph. Our proof combines classical inequalities, the polynomial method, and ideas from DP (or correspondence) coloring.
 
\medskip

\noindent {\bf Keywords.} list coloring, chromatic choosability, list color function, theta graphs, enumerative chromatic choosability

\noindent \textbf{Mathematics Subject Classification.} 05C15, 05C30

\end{abstract}

\section{Introduction} \label{intro}
In this paper, all graphs or digraphs are nonempty, finite, and simple. Generally speaking, we follow West~\cite{W01} for terminology and notation. The set of natural numbers is $\N=\{1,2,3,\ldots\}$. For $m\in\N$, we write $[m]=\{1,\ldots,m\}$, and we take $[0]$ to be the empty set. When $u$ and $v$ are adjacent in a graph $G$, we write $uv$ (or $vu$) for the edge with endpoints $u$ and $v$. For vertex-disjoint graphs $G$ and $H$, we write $G\vee H$ for the join of $G$ and $H$. The \emph{cone of a graph $G$} is the graph $K_1\vee G$.

\subsection{List Coloring and Chromatic Choosability} \label{basic}

For classical vertex coloring of graphs, we wish to color the vertices of a graph $G$ with up to $m$ colors from $[m]$ so that adjacent vertices receive different colors, a so-called \emph{proper $m$-coloring}.  The \emph{chromatic number} of a graph $G$, denoted by $\chi(G)$, is the smallest $m$ such that $G$ has a proper $m$-coloring.  List coloring is a well-known variation on classical vertex coloring that was introduced independently by Vizing~\cite{V76} and Erd\H{o}s, Rubin, and Taylor~\cite{ET79} in the 1970s.  For list coloring, we associate a \emph{list assignment} $L$ with a graph $G$ such that each vertex $v \in V(G)$ is assigned a list of available colors $L(v)$.  We say that $G$ is \emph{$L$-colorable} if there is a proper coloring $f$ of $G$ such that $f(v) \in L(v)$ for each $v \in V(G)$. We refer to $f$ as a \emph{proper $L$-coloring} of $G$.  A list assignment $L$ for $G$ is called a \emph{$k$-assignment} if $|L(v)|=k$ for each $v \in V(G)$.  A graph $G$ is said to be \emph{$k$-choosable} if it is $L$-colorable whenever $L$ is a $k$-assignment for $G$.  The \emph{list chromatic number} of a graph $G$, denoted by $\chi_\ell(G)$, is the smallest $m$ for which $G$ is $m$-choosable.  For any graph $G$, $\chi(G) \leq \chi_\ell(G)$.
   
A graph $G$ is \emph{chromatic-choosable} if $\chi(G)=\chi_\ell(G)$~\cite{O02}. Determining whether a graph is chromatic-choosable is difficult in general. One of the central conjectures in list coloring, the Edge List Coloring Conjecture, asserts that every line graph of a loopless multigraph is chromatic-choosable  (see~\cite{HC92}). The following result of Ohba~\cite{O02} is important for this paper.

\begin{theorem}[\cite{O02}]
\label{thm: ohbajoin}
For any graph $G$, there exists an $N\in\N$ such that $K_p\vee G$ is chromatic-choosable whenever $p\geq N$.
\end{theorem}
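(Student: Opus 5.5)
The plan is to take $N=|V(G)|^2$, or any bound of that quadratic order; in fact $N=n^2$ with $n=|V(G)|$ suffices. The first step is to reduce to a statement about complete multipartite graphs. A graph $H$ with $\chi(H)=k$ is a spanning subgraph of a complete $k$-partite graph $H'$ on the same vertex set: take the colour classes of an optimal colouring as parts. Hence $\chi_\ell(H)\le\chi_\ell(H')$, and so it suffices to show $\chi_\ell(H')\le k$ for a suitable $H'$. Apply this to $H=K_p\vee G$. We have $\chi(H)=p+\chi(G)$, and an optimal colouring uses $p$ singleton classes for the $K_p$ vertices together with $\chi(G)$ classes partitioning $V(G)$. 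So $H'$ is complete multipartite with $p$ singleton parts and at most $n$ further parts, each of size at most $n$.

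The core step is a list-colouring lemma for such graphs. Let $H'$ be complete $k$-partite with $k=p+q$, where $p$ parts are singletons and the remaining $q\le n$ parts have total size at most $n$, and suppose $p$ is large. The claim is that $H'$ is $k$-choosable. I would prove this with Hall's theorem, in the style of Ohba's argument for graphs with few vertices relative to $\chi$. Let $L$ be a $k$-assignment and $U$ the set of non-singleton vertices. For each part $P$, choose a single colour to use on all of it, whenever the lists on $P$ share a common colour; otherwise colour the vertices of $P$ with distinct colours. This has to be done greedily so that the chosen colours are distinct across parts. We then finish by colouring the singletons, which reduces to a system of distinct representatives. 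Specifically, the singleton vertices $v$ need distinct colours avoiding the set $C$ of colours used on $U$. The remaining list of such a $v$ has size at least $k-|C\cap L(v)|$, and we need Hall's condition to hold for these lists.

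I expect the main obstacle to be arranging the colouring of $U$ so that Hall's condition holds for the singletons. The natural approach is to count. Suppose the colouring of $U$ uses $|C|=q+t$ colours, where $t$ is the excess caused by parts that could not be monochromatic. For any set $S$ of singletons, the union of their remaining lists has size at least $k-|C|\ge p-t$. This is not enough on its own, since we need $|S|$ up to $p$. So the argument must exploit the interaction between the lists instead. If the union of $L(v)$ over $v\in S$ is large (at least $|S|+q+t$), Hall's condition holds. If it is small, the lists in $S$ overlap heavily. I would handle that case by recolouring $U$ so that $U$ itself uses colours from outside that union wherever possible, or at least reuses colours already in $C$. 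The standard fix is to treat $U\cup\{\text{singletons}\}$ jointly as a bipartite matching problem. Put parts (or single vertices of $U$) on one side and colours on the other. Choose, among all assignments of colours to $U$, one maximizing the number of parts coloured monochromatically, and then show that any Hall violator for the singletons yields an augmenting modification. Each vertex of $U$ has $k\ge p$ colours while $|U|\le n$. So once $p\ge n^2$, a Hall violator $S$ (where the union $X$ of the lists on $S$ satisfies $|X|<|S|+|C\cap X|$) forces many vertices of $U$ to have lists meeting $X$ in at least $k-n$ colours. Pigeonhole across the at most $n$ vertices of a part then gives a common colour in $X$ or outside it, which allows the exchange.

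Finally, I would check that the quadratic threshold makes all the pigeonhole counts work. For example, two vertices in the same part share at least $2k-|X\cup\cdots|$ colours, and having $n$ vertices each missing at most $n$ colours of a $k$-set guarantees a common colour once $k>n^2$. Combined with $\chi_\ell\ge\chi$, this gives $\chi_\ell(K_p\vee G)=\chi(K_p\vee G)$ for all $p\ge N$.
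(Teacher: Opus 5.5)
Your reduction is correct and standard: $K_p\vee G$ is a spanning subgraph of a complete multipartite $H'$ with $\chi(H')=k=p+\chi(G)$, consisting of $p$ singleton parts and further parts of total size $n$. (The paper gives no proof of this theorem; it only cites Ohba.) The gap is in the core lemma, which carries the whole content of the theorem, and the mechanism you sketch does not work as stated. Take a Hall violator $S$ for the singletons, with $X=\bigcup_{s\in S}L(s)$. It tells you only that $|X|<|S|+|C\cap X|\le p+n$, and that more than $q$ colours of $X$ were spent on $U$. This constrains the singleton lists, since each misses fewer than $n$ colours of $X$, but says nothing about the lists on $U$.

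For instance, let $q=1$, let every singleton have list $X$ with $|X|=k$, and let the unique part be $\{u,v\}$ with $L(u)\cap L(v)=\emptyset$, where $L(u)\cap X$ and $L(v)\cap X$ split $X$ into halves. Your rule (no common colour, so use distinct colours) may colour both $u$ and $v$ inside $X$. This produces a violator, although neither list meets $X$ in $k-n$ colours. So the pigeonhole step cannot be applied to the vertices of a part. Moreover, the ``augmenting modification'' is never defined. Moving a vertex of $U$ to a colour outside $X$ can clash with another part, or create a new violator $S'$ with a different union. Your potential (the number of monochromatic parts) is unchanged by such moves, so nothing guarantees progress.

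The missing idea is to apply Hall's theorem to all of $V(H')$, not just the singletons, and to induct on $|V|$ in a form with slack. Suppose the $q$ non-singleton parts have total size $n$, and $|L(v)|\ge k$ for all $v$, where $k\ge\chi(H')$ and $k>n^2$.
\begin{itemize}
\item If $q=0$, colour greedily.
\item If a non-singleton part $P$ has a common colour $c$, colour $P$ with $c$, delete $c$ from all other lists, and induct. This is valid since $k-1>(n-|P|)^2$.
\item Otherwise let $Y=\bigcup_v L(v)$. If $|Y|<|V|$, every list misses at most $|Y|-k\le n-q-1$ colours of $Y$. Then a part of size $r\le n$ has at least $k-(r-1)(n-q-1)>0$ common colours, a contradiction.
\item If $|Y|\ge|V|$, take a maximum matching between vertices and colours. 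If it leaves a vertex unmatched, let $A$ be the set of vertices reachable from unmatched vertices by alternating paths. Then $|L(A)|<|A|$, so $A\neq V$, and $V\setminus A$ is matched into distinct colours outside $L(A)$. An $L$-colouring of $H'[A]$ obtained by induction uses only colours of $L(A)$, so it combines with the matching to give an $L$-colouring of $H'$.
\end{itemize}
Here the deficient set contains the relevant vertices of $U$, whose lists lie inside the small union. That is exactly what makes your pigeonhole count legitimate, and it yields $N=n^2$.
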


\subsection{Enumeratively Chromatic-Choosable Graphs}

In 1912, Birkhoff~\cite{B12} introduced chromatic polynomials in an effort to make progress on the Four Color Problem.  For $m \in \N$, the \emph{chromatic polynomial} of a graph $G$, denoted by $P(G,m)$, is the number of proper $m$-colorings of $G$.  It can be shown that $P(G,m)$ is a polynomial in $m$ of degree $|V(G)|$ (see~\cite{B12}). For example, for $n \in \N$, $P(K_n,m) = \prod_{i=0}^{n-1} (m-i)$ and $P(C_n,m) = (m-1)^n + (-1)^n (m-1)$.  Moreover, if $T$ is a tree on $n$ vertices, then $P(T,m) = m(m-1)^{n-1}$ (see~\cite{B94, W01}).

The notion of the chromatic polynomial was extended to list coloring in the early 1990s by Kostochka and Sidorenko~\cite{AS90}.  If $L$ is a list assignment for $G$, let $P(G,L)$ denote the number of proper $L$-colorings of $G$.  For $m \in \N$, the \emph{list color function} of $G$, denoted by $P_\ell(G,m)$, is the minimum of $P(G,L)$ over all $m$-assignments $L$ for $G$.  Since an $m$-assignment may assign the same $m$ colors to every vertex of $G$, it follows that $P_\ell(G,m) \leq P(G,m)$ for each $m \in \N$.  In general, the list color function of a graph can differ significantly from its chromatic polynomial for small values of $m$.  For example, for any $n \geq 2$, $P_{\ell}(K_{n,n^n},m) = 0$ and $P(K_{n,n^n},m) > 1$ whenever $m \in \{2, \ldots, n\}$.  On the other hand, in 2023, Dong and Zhang~\cite{DZ22}, improving on earlier results (see~\cite{D92, T09, WQ17}), showed that for any graph $G$, $P_\ell(G,m) = P(G,m)$ whenever $m \geq |E(G)|-1$.

With this and chromatic choosability in mind, it is natural to try to determine which graphs $G$ satisfy $P_\ell(G,m) = P(G,m)$ for every $m \in \N$.  Such graphs are said to be \emph{enumeratively chromatic-choosable}.  Enumerative chromatic choosability was first formally defined in~\cite{KK23}, although it has been studied since the introduction of the list color function in the early 1990s~\cite{AS90}.  Very recently, Dong and Zhang~\cite{DZ26} proved that there are graphs $G$ for which there exists an $m \geq \chi(G)$ such that $P_{\ell}(G,m)=P(G,m)$ and $P_{\ell}(G,m+1) < P(G,m+1)$.  Thus, showing that $P_{\ell}(G,\chi(G)) = P(G,\chi(G))$ does not imply that $G$ is enumeratively chromatic-choosable.  Relatively little is known about which graphs are enumeratively chromatic-choosable.  Our next theorem gives some examples.  For the theorem, recall that a graph is \emph{chordal} if it contains no induced cycle of length greater than 3.

\begin{theorem} \label{thm:examples} \cite{KK23, KM18, KN16, AS90}
The following statements hold.
\begin{enumerate}
    \item\label{examplesa} Chordal graphs are enumeratively chromatic-choosable.
    \item\label{examplesb} Cycles are enumeratively chromatic-choosable.
    \item\label{examplesc} $K_{2,3}$ is enumeratively chromatic-choosable.
    \item\label{examplese} For any graph $G$ and $m \in \N$, if $P_{\ell}(G,m) = P(G,m)$, then $P_{\ell}(K_1 \vee G,m+1) = P(K_1 \vee G,m+1)$.  Consequently, if $G$ is enumeratively chromatic-choosable, then $K_n \vee G$ is enumeratively chromatic-choosable for any $n \in \N$. 
\end{enumerate}
\end{theorem}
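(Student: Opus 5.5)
The statement collects four cited facts, so I will prove each separately. Item~\ref{examplese} and item~\ref{examplesa} are short counting arguments. Items~\ref{examplesb} and~\ref{examplesc} need a real lower bound on $P(G,L)$ for an arbitrary $m$-assignment $L$, and that is where the work lies. Throughout, the upper bound $P_\ell(G,m)\le P(G,m)$ is free, since $L$ may assign $[m]$ to every vertex. So in each case I only need $P(G,L)\ge P(G,m)$ for every $m$-assignment $L$.

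\emph{Item~\ref{examplese}.} Let $w$ be the apex of $K_1\vee G$ and let $L$ be an $(m+1)$-assignment. Partition the proper $L$-colorings by the colour $c\in L(w)$ given to $w$. For each such $c$, the remaining colourings are exactly the proper $L_c$-colorings of $G$, where $L_c(v)=L(v)\setminus\{c\}$. Each $|L_c(v)|\ge m$, so I can shrink every $L_c(v)$ to an $m$-subset $L'_c(v)$. Shrinking lists can only decrease the count, hence $P(G,L_c)\ge P(G,L'_c)\ge P_\ell(G,m)=P(G,m)$. Summing over the $m+1$ choices of $c$ gives
\[P(K_1\vee G,L)\ge (m+1)P(G,m)=P(K_1\vee G,m+1).\]
For the consequence, note that $P(K_1\vee G,1)=0=P_\ell(K_1\vee G,1)$ because $G$ is nonempty. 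So enumerative chromatic choosability passes from $G$ to $K_1\vee G$, and induction on $n$ yields $K_n\vee G$.

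\emph{Item~\ref{examplesa}.} Take a perfect elimination ordering $v_1,\dots,v_n$. Each $v_i$ has a set of $k_i$ earlier neighbours, and these form a clique, so the earlier neighbours use $k_i$ distinct colours. Colouring greedily in this order, $v_i$ therefore always has at least $m-k_i$ admissible colours in $L(v_i)$, whatever $L$ is. Hence $P(G,L)\ge\prod_i \max(m-k_i,0)$. The same product with equality computes $P(G,m)$. If some $m-k_i\le 0$, the clique on $v_i$ and its earlier neighbours has more than $m$ vertices, so $P(G,m)=0$ and the inequality is trivial.

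\emph{Items~\ref{examplesb} and~\ref{examplesc}, the main obstacle.} For a cycle $v_1\cdots v_n$ I would induct on $n$, starting from the chordal case $n=3$, and use list deletion–contraction on an edge $e=v_1v_n$:
\[P(C_n,L)=P(P_n,L)-P(C_{n-1},L^*),\]
where the contracted vertex receives $L(v_1)\cap L(v_n)$.

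The difficulty is that the subtracted term needs an \emph{upper} bound, and induction only supplies lower bounds. So I would instead argue directly:
\begin{itemize}[label={}]
\item If all lists are equal, the count is exactly $P(C_n,m)$ and there is nothing to prove.
\item Otherwise choose adjacent $u,v$ with some $c\in L(u)\setminus L(v)$. Split the count into colourings with $f(u)=c$ and colourings with $f(u)\ne c$. In the first part, $v$ is never blocked by $u$, so this part is at least the corresponding path count. I then need to show that this gain compensates for the second part.
\end{itemize}

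For $K_{2,3}$ with parts $\{u_1,u_2\}$ and $\{w_1,w_2,w_3\}$ I would write
\[P(K_{2,3},L)=\sum_{a\in L(u_1),\,b\in L(u_2),\,a\ne\text{?}}\ \prod_{i=1}^3 |L(w_i)\setminus\{a,b\}|,\]
where $u_1,u_2$ are nonadjacent, so all pairs $(a,b)$ are allowed. The target is $m(m-1)^3+m(m-1)(m-2)^3$. I expect to split into cases on $|L(u_1)\cap L(u_2)|$ and on how the $L(w_i)$ meet $L(u_1)\cup L(u_2)$, and then to minimise the sum using convexity of $x\mapsto x^3$ (a classical power-mean inequality). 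Balancing these cases is where I expect most of the effort.
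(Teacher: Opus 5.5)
Your arguments for items (i) and (iv) are correct and complete; they are the standard proofs. The paper gives no proof of its own and simply quotes the theorem from the literature. The gap is that items (ii) and (iii), the only substantive parts, are not proved.

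For cycles, you rightly observe that deletion--contraction would need an upper bound on $P(C_{n-1},L^*)$. You then replace it by the step ``show that this gain compensates for the second part.'' That step is the entire content of the claim. The colourings with $f(u)\neq c$ are exactly the proper colourings in which $u$ effectively has only $m-1$ colours. Nothing in your setup (no inductive hypothesis, no comparison object) bounds them from below relative to $P(C_n,m)$.

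One mechanism that does work is the DP comparison of Lemma~\ref{lem: DPconnection}:
\begin{itemize}
\item For odd $n$, $P_{DP}(C_n,m)=(m-1)^n-(m-1)=P(C_n,m)$, so there is nothing to prove.
\item For even $n$, $P_{DP}(C_n,m)=(m-1)^n-1$ falls short of $P(C_n,m)$ by exactly $m$. A nonconstant $L$ gives an edge $uv$ with $d\ge 1$, and greedy colouring around the cycle gives $C\ge (m-1)^{n-3}(m-2)$. This covers the deficit unless $m\le 2$ or $(n,m)=(4,3)$, and those residual cases still have to be settled directly.
\end{itemize}

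For $K_{2,3}$ you only write down the counting identity and announce a case analysis that is never carried out. The index set still carries a stray ``$a\ne ?$''; since $u_1u_2\notin E(K_{2,3})$, the sum runs over all of $L(u_1)\times L(u_2)$.

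Convexity applied to the sum as a whole cannot be the engine. Already for the constant assignment, $f(a,b)=|[m]\setminus\{a,b\}|$ equals $m-1$ on the $m$ diagonal pairs and $m-2$ on the other $m(m-1)$ pairs. The power-mean bound then only gives $\sum f^3\ge m^2\bar f^{\,3}=(m-1)^6/m$. For $m\ge 2$ this is strictly smaller than $P(K_{2,3},m)=m(m-1)^3+m(m-1)(m-2)^3$.

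The DP shortcut that works for cycles is also far too weak here: $P_{DP}(K_{2,3},3)=18$, against $P(K_{2,3},3)=30$. So you need finer information: term-by-term lower bounds, together with control on how many terms can be small. Until that analysis is done for every $m$, including $m=2$, item (iii) remains unproved.
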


A theorem and conjecture served as the motivation for this paper.  First, the enumeratively chromatic-choosable theta graphs were recently characterized.  For $l_1,l_2,l_3 \in \N$, a \emph{theta graph}, denoted $\Theta(l_1,l_2,l_3)$, consists of a pair of end vertices joined by three internally disjoint paths of lengths $l_1$, $l_2$, and $l_3$.

\begin{theorem} \label{thm: characterize} \cite{CL26}
Suppose $G=\Theta(l_1,l_2,l_3)$, where $\min\{l_1,l_2,l_3\} = l_1$, $l_2 \geq 2$, and $l_3 \geq 2$.  Then $G$ is not enumeratively chromatic-choosable if and only if $l_1$, $l_2$, and $l_3$ all have the same parity and $\{l_1,l_2,l_3\} \neq \{2\}$.
\end{theorem}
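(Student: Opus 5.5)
The statement has two directions, and I would handle them separately. Throughout, note that $G=\Theta(l_1,l_2,l_3)$ is bipartite exactly when $l_1,l_2,l_3$ all have the same parity. Otherwise $G$ contains an odd cycle and $\chi(G)=3$.

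\emph{Same parity, $\{l_1,l_2,l_3\}\neq\{2\}$.} Here $G$ is connected and bipartite, so $P(G,2)=2$. It therefore suffices to exhibit a $2$-assignment $L$ with $P(G,L)\le 1$.
\begin{itemize}
\item Suppose $G$ is not of the form $\Theta(2,2,2k)$. This covers the all-odd case, including $l_1=1$. Since $G$ has minimum degree at least $2$, it is its own core, and it is neither $K_1$, an even cycle, nor $\Theta(2,2,2k)$. Rubin's characterization of $2$-choosable graphs (Erd\H{o}s--Rubin--Taylor~\cite{ET79}) then gives $P_\ell(G,2)=0<2$.
\item The remaining case is $G=\Theta(2,2,2k)$ with $k\ge 2$. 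Here $G$ is $2$-choosable, so I would construct an explicit $2$-assignment admitting exactly one proper $L$-coloring. The lists on the long path would be chosen so that the color of one end vertex is forced, and that choice then propagates to force everything else. I expect a short case check on the colors of the two end vertices to do this.
\end{itemize}
The graph $\Theta(2,2,2)=K_{2,3}$ is excluded, consistent with Theorem~\ref{thm:examples}\ref{examplesc}.

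\emph{Mixed parity.} I must show that $P(G,L)\ge P(G,m)$ for every $m$-assignment $L$ and every $m\ge 3$; for $m\le 2$ both sides are $0$. I would view $G$ as a cycle $C$ formed by two of the paths, together with an ear $Q$, the third path, joining the end vertices $u,v$. Because the parities are mixed, I can choose the ear so that its parity is favorable in the sense described below.
\begin{itemize}
\item For a path of length $l$ whose ends are precolored, with $m$ colors available on the interior, let $a_l$ and $b_l$ be the numbers of extensions in the uniform case when the end colors are equal and when they differ, respectively. These satisfy $a_l-b_l=(-1)^l$ up to a positive factor. So which of $a_l,b_l$ is the minimum depends only on the parity of $l$.
\item Write $P(G,L)=\sum_{c} N_Q(c)$, where $c$ ranges over proper $L$-colorings of $C$ and $N_Q(c)$ counts extensions along $Q$.
\item I would prove a lemma: for arbitrary lists of size $m$, the number of extensions along $Q$ is at least $a_l$ or $b_l$ according to whether $c(u)=c(v)$ (or an appropriate convex combination of the two). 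I expect the polynomial method or a greedy count along the path to work here.
\item I would also need a lower bound, in terms of $L$, on the numbers of $L$-colorings of $C$ with $c(u)=c(v)$ and with $c(u)\neq c(v)$. These would come from the cycle result in Theorem~\ref{thm:examples}\ref{examplesb} together with a refinement of it for the two path pieces of $C$.
\end{itemize}
Combining these bounds via a rearrangement or Chebyshev-type inequality should give $P(G,L)\ge P(G,m)$.

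The main obstacle is the correlation between the two types of colorings of $C$ (equal versus different colors on $u,v$) and the way $L$ is arranged along $Q$. For a non-uniform $L$, the count of "equal-end" colorings of $C$ can drop below the uniform value while the corresponding extension counts do not compensate. My first attempt would use DP-coloring: model the interaction at $u,v$ by a correspondence cover, and show that the extremal configuration is the one in which all lists are identical. Choosing the ear parity, which is possible exactly because the parities are mixed, is what should make the inequality point in the right direction.
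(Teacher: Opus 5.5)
The paper only quotes this theorem from \cite{CL26}, so there is no in-paper proof to compare against; I am judging the proposal on its own.

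\textbf{Same-parity direction.} This half is sound. Rubin's theorem handles everything except $\Theta(2,2,2k)$ with $k\ge 2$, and the construction you left open there is easy to supply. Take $L(x)=\{1,2\}$ and $L(y)=\{1,3\}$. Give the middle vertices of the two paths of length $2$ the lists $\{1,3\}$ and $\{2,3\}$. Along the long path $x=p_0,p_1,\dots,p_{2k}=y$, use $\{1,2\}$ on $p_1,\dots,p_{2k-3}$, $\{2,3\}$ on $p_{2k-2}$, and $\{1,3\}$ on $p_{2k-1}$. The end pairs $(1,3)$ and $(2,3)$ are killed by a path of length $2$. The pair $(1,1)$ forces $p_{2k-3}=2$ and $p_{2k-2}=3$, leaving no color for $p_{2k-1}$. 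The pair $(2,1)$ extends in exactly one way. Hence $P(G,L)=1<2$.

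\textbf{Mixed-parity direction: the key lemma is false.} This is the substantive half, and it is not proved. The one concrete lemma you propose for it is false. A path of length $l$ need not have at least $a_l$ extensions when $c(u)=c(v)$ and at least $b_l$ when $c(u)\neq c(v)$.
\begin{itemize}
\item For $l=3$, take interior lists $I\cup\{\alpha\}$ and $I\cup\{\beta\}$ with $|I|=m-1$. The pair $(\alpha,\beta)$, $\alpha\neq\beta$, then has only $(m-1)(m-2)=b_3-1$ extensions; this is the situation of Lemma~\ref{lem: p4}.
\item For $l=4$ and $m=3$, take interior lists $\{0,1,2\},\{1,2,3\},\{0,1,3\}$. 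The pair $(0,0)$ then has only $5=b_4<a_4=6$ extensions.
\end{itemize}
What is true is Lemma~\ref{lem: even path simple}. It gives $N\ge\min\{a_l,b_l\}$ everywhere, with $N\ge a_l$ on some set $A$ of exactly $m$ end-pairs and $N\ge b_l$ off $A$. But $A$ need not be the diagonal. That is exactly the correlation problem you identify and then leave unresolved.

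\textbf{What the correct lemma buys.} Write $C=S_i\cup S_j$ and let $l$ be the length of the ear $Q$. If the two paths in $C$ are odd and $Q$ is the even one, then $N_Q\ge b_l+\mathbf{1}_A$. Also $N_C=N_iN_j\ge a_{l_i}a_{l_j}$ pointwise, since $\min\{a,b\}=a$ for odd lengths. Hence
\[
P(G,L)\ge b_l\,P(C,L_C)+m\,a_{l_i}a_{l_j}\ge b_l\,P(C_{l_i+l_j},m)+m\,a_{l_i}a_{l_j}=P(G,m),
\]
by Theorem~\ref{thm:examples}\ref{examplesb}.

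\textbf{Where it fails.} If two paths are even and one is odd, neither choice of ear closes the argument.
\begin{itemize}
\item With the odd path as $Q$, you get $P(G,L)\ge b_lP(C,L_C)-\sum_{A}N_C$, and nothing bounds $\sum_A N_C$ from above.
\item With an even path as $Q$, $C$ is an odd cycle. The pointwise bounds give only $\sum_A N_C\ge m\,b_{l_i}a_{l_j}$ (with $l_i$ even, $l_j$ odd). This is short of the required $m\,a_{l_i}a_{l_j}$ by $m\,a_{l_j}$.
\end{itemize}
That deficit has to be covered by a genuine surplus. One candidate is $b_l\bigl(P(C,L_C)-P(C,m)\bigr)$, obtained from Lemma~\ref{lem: DPconnection} when $L_C$ is nonconstant, using that odd cycles satisfy $P_{DP}=P$; the constant case needs a separate argument. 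Even this is delicate. For $\Theta(2,2,3)$ with $m=3$ and $d=1$, the straightforward estimate yields a surplus of only $b_2\,b_4=5$, against the required $m\,a_3=6$.

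Your proposal stops at the hope that the extremal configuration is the one with all lists identical. That restates the goal rather than proving it, so this direction remains open.
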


Second, motivated by Theorem~\ref{thm: ohbajoin}, Allred and Mudrock recently put forth the following conjecture.

\begin{conj} \label{conj:join} \cite{AM25}
For any graph $G$, there is a $p \in \N$ such that $K_p \vee G$ is enumeratively chromatic-choosable.    
\end{conj}

Note that by Theorem~\ref{thm:examples}, if $G$ is an enumeratively chromatic-choosable theta graph, then $K_p \vee G$ is enumeratively chromatic-choosable for each $p \in \N$.  So it is natural to wonder what happens when $G$ is not an enumeratively chromatic-choosable theta graph.  Specifically, is $K_p \vee G$ enumeratively chromatic-choosable for each $p \in \N$ whenever $G$ is a theta graph?  We show that the answer is yes by proving the following theorem.

\begin{theorem} \label{thm: cone}
If $G$ is a theta graph, then $K_1 \vee G$ is enumeratively chromatic-choosable.
\end{theorem}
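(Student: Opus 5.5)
We reduce first to the hard case. If $G=\Theta(l_1,l_2,l_3)$ is enumeratively chromatic-choosable, Theorem~\ref{thm:examples}\ref{examplese} already gives the result. A theta graph with two paths of length $1$ is impossible, since the graph is simple. So by Theorem~\ref{thm: characterize} we may assume $l_1\le l_2\le l_3$, $l_2\ge 2$, all $l_i$ of the same parity, and $\{l_1,l_2,l_3\}\neq\{2\}$. Let $w$ be the cone vertex and $n=|V(G)|$.

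Fix an $m$-assignment $L$ for $K_1\vee G$. Each color $c\in L(w)$ gives the induced list assignment $L_c$ on $G$, where $L_c(v)=L(v)\setminus\{c\}$. Then $P(K_1\vee G,L)=\sum_{c\in L(w)}P(G,L_c)$, while $P(K_1\vee G,m)=mP(G,m-1)$. The task is to show that the average of $P(G,L_c)$ over $c\in L(w)$ is at least $P(G,m-1)$.

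Small and large $m$ are handled separately. For $m\le 2$ the claim is trivial, and for $m=3$ with $G$ bipartite we need $P_\ell(G,2)=P(G,2)$. For large $m$ the Dong--Zhang bound $m\ge |E|-1$ applies.

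The main step is to understand how $P(G,L')$ can drop below $P(G,k)$ for lists $L'$ of size between $k$ and $k+1$, where $k=m-1$. Vertices whose list avoided $c$ keep $m=k+1$ colors. I would write $P(G,L_c)$ via a deletion/DP-style decomposition. Fix the two end vertices $x,y$ and count colorings of the three paths given $(f(x),f(y))$. For a path with lists $L'$, the number of extensions is computed by the standard transfer argument. It equals the number of walks, which is close to $\frac{(k-1)^{l}+(-1)^{l}(k-1)}{k}$ or the like, depending on whether $f(x)=f(y)$, with error terms determined by which list entries "match" along the path.

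The known deficiency for same-parity theta graphs comes from correlated lists along the paths, in the spirit of DP-coloring, and it is of lower order. Precisely, I expect $P(G,L')\ge P(G,k)-\varepsilon(L')$, where $\varepsilon$ is supported on configurations in which many consecutive lists are essentially equal. Each vertex $v$ with $c\notin L(v)$ has $|L_c(v)|=k+1$, and the polynomial-method / monotonicity estimate gives a gain of roughly $P(G-v,k)$-order terms from the extra color.

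The key inequality is a double-counting across $c\in L(w)$: $\sum_c|\{v: c\notin L(v)\}|=\sum_v|L(w)\setminus L(v)|$. If the lists $L(v)$ are all equal to $L(w)$, the count equals $mP(G,m-1)$ exactly. Otherwise, each color $c$ on which $L(v)$ differs from $L(w)$ yields a vertex with an extra color in $L_c$, contributing a gain. The deficit $\varepsilon(L_c)$ can only occur when the lists are far from constant, and then there are many gains.

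The main obstacle is quantifying this trade-off: showing that the total gain $\sum_c(\text{extra-color gains})$ dominates $\sum_c\varepsilon(L_c)$ for all $3\le m<|E|-1$. I would first try a crude bound. Each extra color at $v$ gains at least $(k-1)^{n-2}$-ish colorings, by extending a coloring of $G-v$ greedily. The deficit $\varepsilon$ is at most about $(k-1)^{n-l_3-1}$ per problematic structure, via classical inequalities such as AM-GM on products of path counts. I would then check the remaining small cases, small $k$ and short paths, by direct polynomial-method arguments.
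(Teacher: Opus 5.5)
Your reduction to the same-parity case and the identity $P(K_1\vee G,L)=\sum_{c\in L(w)}P(G,L_c)$ are fine, but the proposal has two genuine gaps.

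\textbf{The case $m=3$.} You say this case needs $P_\ell(G,2)=P(G,2)$, but that is false for every graph left after your reduction. Unless $G=\Theta(2,2,2k)$, a same-parity theta graph is its own core and is not one of the Erd\H{o}s--Rubin--Taylor $2$-choosable graphs, so $P_\ell(G,2)=0<2=P(G,2)$. For $\Theta(2,2,2k)$ with $k\ge 2$, the failure of enumerative chromatic choosability also occurs exactly at $m=2$. So at $m=3$ some $L_c$ can have no colorings at all, and part (iv) of Theorem~\ref{thm:examples} cannot be used. Your averaging identity is the right starting point, but you must show that the other two colors of $L(w)$ compensate, and nothing in the proposal does this. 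The paper uses $h(i)=|\{v\in V(G): i\notin L(v)\}|$:
\begin{itemize}
\item If $\min_i h(i)\ge 2$, each $L_i$ has at least two lists of size $3$. Proposition~\ref{pro: one3list} (Alon--Tarsi when the $3$-list is at an end vertex, $2$-choosability of even cycles otherwise) shows the graph polynomial does not vanish on the list grid. The Alon--F\"uredi count, Theorem~\ref{thm: AandF}, then upgrades this to $P(G,L_i)\ge 2$.
\item If $\min_i h(i)\le 1$, it counts colorings directly. It colors all of $X$ or all of $Y$ with one color, so that what remains is a star.
\end{itemize}

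\textbf{The range $4\le m<|E(K_1\vee G)|-1=2(l_1+l_2+l_3)-2$.} This range is unbounded as $G$ grows, so it cannot be closed by checking finitely many small cases. Your deficit-versus-gain trade-off is never quantified: the gains of order $(k-1)^{n-2}$ and deficits of order $(k-1)^{n-l_3-1}$ are heuristics, and no inequality between $\sum_c(\text{gains})$ and $\sum_c\varepsilon(L_c)$ is proved.

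More importantly, you miss that no trade-off is needed here, because for $k\ge 3$ the deficit is zero. The paper proves $P_\ell(G,k)=P(G,k)$ for every $k\ge 3$, and then part (iv) of Theorem~\ref{thm:examples} settles all $m\ge 4$ at once. That proof works as follows:
\begin{itemize}
\item It applies Lemma~\ref{lem: DPconnection}, $P(G,L)\ge P_{DP}(G,k)+Cd$, with the explicit DP color function of same-parity theta graphs and $C\ge (k-1)^{l_1+l_2+l_3-5}(k-2)^2$. This handles $l_1+l_2\ge 8$ and most of $l_1+l_2=6$.
\item The leftover families $\Theta(2,2,2k)$, $\Theta(1,3,l_3)$, $\Theta(1,5,5)$, $\Theta(2,4,4)$, $\Theta(3,3,3)$ are treated by AM-GM and Karamata applied to $P(G,L)=\sum_{(c,d)}\prod_i N_i(c,d)$. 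These use the $A/B$ partition of Lemma~\ref{lem: even path simple}, and the structural Lemma~\ref{lem: p4} for $\Theta(1,3,l_3)$.
\end{itemize}
Without a result of this strength, or an actual bound on your trade-off, the middle range remains open.
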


Our strategy for proving Theorem~\ref{thm: cone} is as follows.  First, suppose $G=\Theta(l_1,l_2,l_3)$, where $l_1 \leq l_2 \leq l_3$ and $l_2 \geq 2$.  If $G$ is enumeratively chromatic-choosable, then the desired result follows from Theorem~\ref{thm:examples}.  Thus, by Theorem~\ref{thm: characterize}, we may assume that $G$ is not enumeratively chromatic-choosable; that is, $l_1$, $l_2$, and $l_3$ all have the same parity and $\{l_1,l_2,l_3\} \neq \{2\}$.  It follows that $G$ is bipartite and $\chi(K_1 \vee G)=3$.  Therefore, it suffices to show that $P_{\ell}(K_1 \vee G,m)=P(K_1 \vee G,m)$ whenever $m \geq 3$.  In Section~\ref{almost}, we show that $P_{\ell}(G,m)=P(G,m)$ whenever $m\geq3$. Our arguments in this section combine bounds involving the DP color function with classical inequalities, including the AM-GM and Karamata inequalities. Together with Theorem~\ref{thm:examples}, this implies that $P_{\ell}(K_1\vee G,m)=P(K_1\vee G,m)$ whenever $m\geq4$.  Finally, in Section~\ref{finish}, we complete the proof of Theorem~\ref{thm: cone} by showing that $P_{\ell}(K_1\vee G,3)=P(K_1\vee G,3)$ using the Alon-Tarsi Theorem and an enumerative form of the Combinatorial Nullstellensatz.

\section{Almost Enumeratively Chromatic-Choosable} \label{almost}

For the remainder of this paper, whenever $G=\Theta(l_1,l_2,l_3)$, we will suppose $l_1 \leq l_2 \leq l_3$ and $l_2 \geq 2$.  In this section we show that when  $l_1$, $l_2$, and $l_3$ all have the same parity and $\{l_1,l_2,l_3\} \neq \{2\}$, $P_{\ell}(G,m)=P(G,m)$ whenever $m \geq 3$.  Intuitively, this shows that such graphs are not enumeratively chromatic-choosable only because $P_{\ell}(G,2) < P(G,2)$.

We begin by using the DP color function.  DP-coloring, introduced by Dvo\v{r}\'{a}k and Postle under the name correspondence coloring~\cite{DP15}, is a generalization of list coloring.  Since we will only use the DP color function, we omit the full definition of DP-coloring; see~\cite{DP15,KM19} for definitions and further background.  For a graph $G$ and $m \in \N$, let $P_{DP}(G,m)$ denote the minimum number of proper DP-colorings over all $m$-fold covers of $G$.  Since every $m$-assignment of $G$ gives rise to an $m$-fold cover of $G$ with the same number of proper colorings, for every graph $G$ and $m \in \N$, $P_{DP}(G,m) \leq P_\ell(G,m) \leq P(G,m)$.  The following lemma involving the DP color function will play an important role in the first part of this section.

\begin{lemma} \cite{AM25} \label{lem: DPconnection}
Suppose $G$ is a graph and $L$ is an $m$-assignment for $G$.  Suppose $uv \in E(G)$, $|L(u)-L(v)| = d \geq 1$, and for any $x \in L(u)$ and $y \in L(v)$ with $x \neq y$, there are at least $C$ proper $L$-colorings of $G$ that color $u$ with $x$ and $v$ with $y$.  Then,
\[P(G,L) \geq P_{DP}(G,m) + Cd.\]
\end{lemma}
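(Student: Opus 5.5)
The plan is to reduce the general $L$ to a DP cover in which the edge $uv$ is "uncrossed" on the $d$ colors of $L(u)-L(v)$. We then count the difference between the two covers.

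First build the $m$-fold cover $\mathcal{H}_L$ of $G$ associated with $L$. For each vertex $w$ its fiber is $\{w\}\times L(w)$, and $(w,c)$ is joined to $(w',c)$ whenever $ww'\in E(G)$ and $c\in L(w)\cap L(w')$. The proper $\mathcal{H}_L$-colorings correspond bijectively to proper $L$-colorings, so $P(G,L)$ is their number.

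Next modify only the matching between the fibers of $u$ and $v$. Put $A=L(u)-L(v)$ and $B=L(v)-L(u)$; then $|A|=|B|=d$. Fix a bijection $\sigma:A\to B$ and add the $d$ edges $(u,x)(v,\sigma(x))$ for $x\in A$, leaving every other edge of the cover unchanged. The matching between the $u$- and $v$-fibers becomes a perfect matching, so the result $\mathcal{H}'$ is a legitimate $m$-fold cover of $G$. Hence $\mathcal{H}'$ has at least $P_{DP}(G,m)$ proper colorings.

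Now compare the two colorings sets. Every edge of $\mathcal{H}_L$ is an edge of $\mathcal{H}'$, so every proper $\mathcal{H}'$-coloring is a proper $\mathcal{H}_L$-coloring. An $\mathcal{H}_L$-coloring fails to be an $\mathcal{H}'$-coloring exactly when it uses one of the new edges, that is, when it colors $u$ with some $x\in A$ and $v$ with $\sigma(x)$. For each $x\in A$ we have $x\neq\sigma(x)$, because $\sigma(x)\notin L(u)$. The hypothesis therefore gives at least $C$ proper $L$-colorings with $u\mapsto x$ and $v\mapsto\sigma(x)$. These $d$ families are pairwise disjoint, since they differ in the color of $u$. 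Hence
\[P(G,L)=P(G,\mathcal{H}_L)\ \ge\ P(G,\mathcal{H}')+Cd\ \ge\ P_{DP}(G,m)+Cd.\]

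The step most in need of care is verifying that $\mathcal{H}'$ is a valid cover: the $u$–$v$ matching must remain a matching. Vertices $(u,x)$ with $x\in L(u)\cap L(v)$ are already matched to $(v,x)$. The vertices $(u,x)$ with $x\in A$ and $(v,y)$ with $y\in B$ are unmatched, so a bijection between these unmatched parts extends the matching without conflict. The remaining points are bookkeeping: use the standard convention that the list-coloring cover has a matching, not necessarily perfect, on each edge, and check that the added edges create no other changes.
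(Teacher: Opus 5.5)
Your proof is correct: completing the matching between the fibers of $u$ and $v$ in the cover induced by $L$ gives a valid $m$-fold cover $\mathcal{H}'$, whose colorings are exactly the proper $L$-colorings avoiding the $d$ pairs $(x,\sigma(x))$; each of these $d$ pairwise disjoint families has at least $C$ members, since $\sigma(x)\notin L(u)$ forces $x\neq\sigma(x)$. The paper gives no proof of this lemma (it is imported from \cite{AM25}), and your cover-completion argument is the natural one for it.
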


Explicit formulas for both the chromatic polynomial and the DP color function are known for theta graphs.  Indeed, when $G=\Theta(l_1,l_2,l_3)$, its chromatic polynomial is given by~\cite{BK23}:
\begin{align*}
&P(G,m)=\\
& \frac{((m-1)^{l_1+1}+(-1)^{l_1+1}(m-1))((m-1)^{l_2+1}+(-1)^{l_2+1}(m-1))((m-1)^{l_3+1}+(-1)^{l_3+1}(m-1))}{(m(m-1))^2} \\
&+ \frac{((m-1)^{l_1}+(-1)^{l_1}(m-1))((m-1)^{l_2}+(-1)^{l_2}(m-1))((m-1)^{l_3}+(-1)^{l_3}(m-1))}{m^2}.
\end{align*}
For the DP color function, we use the following result.

\begin{theorem} \cite{BK23} \label{thm: DPformulas}
Let $G=\Theta(l_1,l_2,l_3)$, where $l_1=\min_{i \in [3]} l_i \geq 1$ and $l_i \geq 2$ for each $i \in \{2,3\}$.  If $l_1$, $l_2$, and $l_3$ all have the same parity, then for $m \geq 3$,
\[
P_{DP}(G,m)=\frac{1}{m}\bigg[(m-1)^{l_1+l_2+l_3}-(m-1)^{l_1}-(m-1)^{l_2}-(m-1)^{l_3}+2(-1)^{l_1+l_2+l_3}\bigg].
\]
\end{theorem}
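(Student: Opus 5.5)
The plan is to prove the formula by computing $P(G,\mathcal{H})$ exactly for an arbitrary $m$-fold cover $\mathcal{H}$, expressed through a few combinatorial parameters of the cover, and then minimizing over those parameters. Let $u,v$ be the end vertices and $Q_1,Q_2,Q_3$ the three $u$--$v$ paths, of lengths $l_1,l_2,l_3$.

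First I reduce to \emph{full} covers, in which every cross edge set between adjacent vertices is a perfect matching. Adding edges to the cover can only destroy proper colorings, and any partial matching extends to a perfect one, so the minimum $P_{DP}(G,m)$ is attained at a full cover.

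In a full cover, composing the matchings along $Q_i$ gives a bijection $\sigma_i\colon L(u)\to L(v)$. A standard transfer-matrix computation (equivalently, the count for a cycle $C_{l+1}$ with a twist) should give the following. The number of proper colorings of the interior of $Q_i$ extending $u\mapsto x$, $v\mapsto y$ is
\[
a_i=\frac{(m-1)^{l_i}+(-1)^{l_i}(m-1)}{m} \ \text{ if } y=\sigma_i(x), \qquad b_i=\frac{(m-1)^{l_i}-(-1)^{l_i}}{m} \ \text{ otherwise}.
\]
This also covers $l_1=1$, where $a_1=0$ and $b_1=1$ encode the edge $uv$. Note that $a_i-b_i=(-1)^{l_i}$ and $b_i\ge 1$.

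The three paths are internally disjoint, so
\[
P(G,\mathcal H)=\sum_{x,y}\prod_{i=1}^3\bigl(b_i+(-1)^{l_i}[y=\sigma_i(x)]\bigr).
\]
Expanding the product gives four groups of terms:
\begin{itemize}
\item the constant term $m^2b_1b_2b_3$;
\item the single-indicator terms $m\sum_i(-1)^{l_i}b_jb_k$, where $\{i,j,k\}=[3]$;
\item the pair terms $\sum_{i<j}(-1)^{l_i+l_j}\,b_k\,F_{ij}$, where $F_{ij}=|\{x:\sigma_i(x)=\sigma_j(x)\}|$;
\item the triple term $(-1)^{l_1+l_2+l_3}\,T$, where $T=|\{x:\sigma_1(x)=\sigma_2(x)=\sigma_3(x)\}|$.
\end{itemize}
Under the equal-parity hypothesis every $(-1)^{l_i+l_j}=+1$, so the pair terms are nonnegative. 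Since $T\le F_{ij}$ for each pair and each $b_k\ge1$, the pair terms are at least $3T\ge T$. Hence the last two groups together contribute at least $0$, whatever the sign of the triple term.

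The value $0$ is achieved when $m\ge3$. Take $\sigma_1,\sigma_2,\sigma_3$ to be three rows of a Latin square (for example cyclic shifts). Then they pairwise disagree everywhere, so $F_{ij}=T=0$, and such matchings can be realized as a cover by placing each $\sigma_i$ on one edge of $Q_i$ and identities elsewhere. This is exactly where $m\ge 3$ is needed.

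Therefore
\[
P_{DP}(G,m)=m^2b_1b_2b_3+m\sum_i(-1)^{l_i}b_jb_k.
\]
It remains to check algebraically that this equals the stated expression. Write $(-1)^{l_i}=\varepsilon$, the common sign, and $q=m-1$, and expand; I expect the cross terms to collapse to $\tfrac1m\bigl[q^{l_1+l_2+l_3}-\sum_i q^{l_i}+2\varepsilon^3\bigr]$.

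The main obstacle I anticipate is the path-extension count $a_i,b_i$ in a full cover, together with the justification that the composite bijection $\sigma_i$ captures everything. I would prove it by induction on $l_i$, using the recurrences $a_{l+1}=(m-1)b_l$ and $b_{l+1}=a_l+(m-2)b_l$. The final algebraic identity is routine but should be verified carefully.
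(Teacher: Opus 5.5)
The paper gives no proof of this statement; it is quoted from \cite{BK23}. So there is no in-paper argument to compare against, and your proposal stands on its own as a correct, self-contained proof.

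Each step checks out.
\begin{itemize}
\item The reduction to full covers is valid.
\item The two-valued path counts $a_i,b_i$ are right. Relabel the lists along $Q_i$ so that every matching becomes the identity, with the twist absorbed into $\sigma_i$; the count then reduces to walks in $K_m$.
\item The recurrences are right.
\item The expansion into constant, single, pair and triple terms is right.
\item The bound $\sum_{i<j}b_kF_{ij}+\varepsilon T\ge 3T-T\ge 0$ holds.
\item The cyclic-shift cover attaining $F_{ij}=T=0$ is a valid construction.
\end{itemize}

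The one step you left open collapses as you expected. Put $A=q^{l_1}$, $B=q^{l_2}$, $C=q^{l_3}$, with common sign $\varepsilon$. Then
\[
(A-\varepsilon)(B-\varepsilon)(C-\varepsilon)+\varepsilon\bigl[(B-\varepsilon)(C-\varepsilon)+(A-\varepsilon)(C-\varepsilon)+(A-\varepsilon)(B-\varepsilon)\bigr]=ABC-(A+B+C)+2\varepsilon .
\]
Under equal parity, $\varepsilon=\varepsilon^3=(-1)^{l_1+l_2+l_3}$, so dividing by $m$ gives exactly the stated formula.

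Your route also shows where each hypothesis enters:
\begin{itemize}
\item Equal parity makes every pair coefficient $(-1)^{l_i+l_j}$ equal to $+1$, so coincidences among the composite bijections can only add colorings.
\item $m\ge 3$ is needed only to have three pointwise-distinct bijections. In the all-odd case, $b_k\ge 1$ already holds for $m=2$; in the all-even case no bound on $b_k$ is needed.
\end{itemize}

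Finally, your argument is the DP counterpart of the decomposition the paper uses for list assignments: Lemma~\ref{lem: sum} together with the $A$/$B$ dichotomy of Lemma~\ref{lem: even path simple}. In a full cover, that inequality dichotomy becomes an exact two-valued formula controlled by $\sigma_i$.
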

Finally, the following elementary lemma provides a lower bound on the value of $C$ when applying Lemma~\ref{lem: DPconnection} to $G$.
\begin{lemma} \label{thm: countMath} \cite{CL26}
Suppose $G = \Theta(l_1, l_2, l_3)$ and $L$ is an $m$-assignment for $G$ with $m \geq 3$. Suppose $q$ and $s$ are adjacent vertices in $G$, and suppose $x \in L(q)$, $y \in L(s)$, and $x \neq y$. Then, there are at least
$$(m-1)^{l_1+l_2+l_3-5} (m-2)^2$$
proper $L$-colorings of $G$ that color $q$ with $x$ and $s$ with $y$.
\end{lemma}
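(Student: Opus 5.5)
We fix the adjacent vertices $q,s$ and colors $x\in L(q)$, $y\in L(s)$ with $x\neq y$, and count proper $L$-colorings extending $q\mapsto x$, $s\mapsto y$ by a greedy ordering argument. The lower bound $(m-1)^{l_1+l_2+l_3-5}(m-2)^2$ has exponents summing to $l_1+l_2+l_3-3$, which is exactly $|V(G)|-2$, since $|V(G)|=l_1+l_2+l_3-1$. So the plan is to order the remaining $|V(G)|-2$ vertices so that each of them, at the moment it is colored, has at most one already-colored neighbor, except for at most two vertices that have two. A vertex with one colored neighbor has at least $m-1$ choices, and a vertex with two colored neighbors has at least $m-2\geq 1$ choices (this uses $m\ge 3$). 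Multiplying the numbers of choices gives the bound, because distinct choice sequences yield distinct colorings.

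To build the ordering, note that the theta graph has cyclomatic number $2$. Once $q$ and $s$ are colored, the uncolored vertices induce a graph obtained by deleting the edge $qs$. If we grow the colored set so that the subgraph induced by the colored vertices stays connected, then each newly colored vertex has at least one colored neighbor. Summing over all new vertices, the number of edges between a new vertex and earlier vertices totals $|E(G)|-1$ (all edges except $qs$). Here $|E(G)|-1=l_1+l_2+l_3-1=(|V(G)|-2)+2$. Hence, over the whole process, exactly two extra back-edges occur beyond one per vertex. So with any connected ordering (for instance BFS from the edge $qs$), the total excess equals $2$.

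The only issue is to make sure no single vertex receives three colored neighbors; otherwise one factor would drop to $m-3$, which could be $0$ when $m=3$. Vertices of degree $2$ can have at most two colored neighbors. The only degree-$3$ vertices are the two end vertices $u,w$, so we must avoid coloring either of them last among its three neighbors. To do this, we color the paths in order, walking along each path from already-colored vertices. When an end vertex, say $u$, is still uncolored, we reach it via one path before finishing the others. Concretely, we start from the edge $qs$, which lies on some path $P_i$. We extend along $P_i$ in both directions until we reach $u$ and $w$, where each end vertex gets at most one or two colored neighbors depending on whether the other paths are still uncolored. Then we color the remaining two paths, each from one end inward, so that the last interior vertex of each path closes a cycle and has exactly two colored neighbors. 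This puts the two excess neighbors on interior degree-$2$ vertices, giving exactly two factors of $m-2$.

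The one subtlety, and the step needing care, is the degenerate case where a path has length $1$ (i.e., $l_1=1$, so $uw$ is an edge) or where $qs$ is itself that edge or is incident to $u$ or $w$. In those cases the "closing" vertex of a path may be an end vertex rather than an interior vertex. I would handle this by checking that, with the path containing $qs$ colored first and then the path of length $1$ (if it is not the one already colored), the closing edges still land at vertices with exactly two colored neighbors. For example, when $l_1=1$ and $qs$ is not $uw$, coloring $P_i$ first makes $u$ and $w$ both colored, so the edge $uw$ is a back-edge inside the colored set. That edge contributes its excess without any vertex receiving a third colored neighbor, and it only improves the count to at most one factor of $m-2$. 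In all cases every vertex ends up with at most two colored neighbors at its turn, and the excess totals exactly $2$. Therefore the product of choices is at least $(m-1)^{l_1+l_2+l_3-5}(m-2)^2$, where we use $m-1\ge m-2$ to absorb any case with fewer excess edges.
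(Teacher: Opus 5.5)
Your greedy-ordering argument is correct. Coloring the path through $qs$ first and then filling the other two paths inward from one end ensures no vertex ever has three colored neighbors at its turn, so the total excess of two back-edges yields exactly two factors of $m-2$. The paper quotes this lemma from the literature without proof, calling it elementary, and this greedy count is the natural proof, in the same spirit as the greedy completions the paper itself uses for $C_6$ and for $S_1$.

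One small slip: when $l_1=1$ and $qs\neq uw$ there are still exactly two factors of $m-2$, not ``at most one.'' They come from the second end vertex reached, via the edge $uw$, and from the closing interior vertex of the remaining long path, as your own excess count forces; this does not affect the bound.
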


Combining these results we prove the following lemma which takes care of many of the possible values of $l_1$, $l_2$, and $l_3$.

\begin{lemma} \label{lem: DPchunk} 
Suppose $G = \Theta(l_1,l_2,l_3)$ where $l_1$, $l_2$, and $l_3$ all have the same parity, $l_1 \leq l_2 \leq l_3$, and $l_2 \geq 2$.  If $l_1 + l_2 \geq 8$, or $l_1 + l_2 = 6$ and $l_3 > l_2$, then $P_{\ell}(G,m) = P(G,m)$ whenever $m \geq 3$. Moreover, when $l_1 + l_2 = 6$ and $l_3 = l_2$, $P_{\ell}(G,m) = P(G,m)$ whenever $m \geq 5$.
\end{lemma}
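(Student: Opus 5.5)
Fix $m \geq 3$ and an $m$-assignment $L$ for $G$; we must show $P(G,L) \geq P(G,m)$. We split according to whether $L$ is constant on edges.

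\emph{Case 1: $L(u)=L(v)$ for every edge $uv$.} Since $G$ is connected, $L$ assigns the same list to every vertex. Hence $P(G,L)=P(G,m)$ and there is nothing to prove.

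\emph{Case 2: some edge $uv$ has $L(u)\neq L(v)$.} Let $d = |L(u)-L(v)| \geq 1$. By Lemma~\ref{thm: countMath} we may take $C=(m-1)^{n-5}(m-2)^2$ in Lemma~\ref{lem: DPconnection}, where $n=l_1+l_2+l_3$. This gives
\[
P(G,L) \geq P_{DP}(G,m) + (m-1)^{n-5}(m-2)^2 .
\]
So it suffices to prove the purely numerical inequality
\[
P(G,m) - P_{DP}(G,m) \leq (m-1)^{n-5}(m-2)^2. \qquad (\ast)
\]

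To attack $(\ast)$, substitute $a=m-1\ge 2$ and $\epsilon=(-1)^{l_1}$; all three $l_i$ share this sign. Expand the formula for $P(G,m)$ from \cite{BK23} and subtract the formula from Theorem~\ref{thm: DPformulas}. Both expressions have leading term $a^n/m$, which cancels. We expect the difference to be a short signed sum of terms like $a^{l_i+l_j}/m$ plus lower-order terms with factors $\epsilon$. Concretely, in the first product each factor is $a(a^{l_i}+\epsilon)$ and the denominator is $(a+1)^2a^2$; in the second each factor is $a(a^{l_i-1}+\epsilon)$. Expanding both products symmetrically in $x_i=a^{l_i}$ should give
\[
P(G,m)-P_{DP}(G,m)=\frac{1}{a+1}\Big(\sum_{i<j} a^{l_i+l_j} + (\text{terms of order } a^{\max l_i})\Big)\cdot c(a)
\]
for an explicit rational factor $c(a)$. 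The dominant term is $a^{l_2+l_3}$, up to a constant.

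Comparing with $a^{n-5}(a-1)^2$ then reduces $(\ast)$ roughly to
\[
a^{l_2+l_3}\lesssim a^{n-5}(a-1)^2,
\]
that is, $a^{5-l_1}\lesssim (a-1)^2$ together with corrections from the terms $a^{l_1+l_3}$ and $a^{l_1+l_2}$. When $l_1$ is large this is immediate. For small $l_1$, the terms $a^{l_1+l_3}$ matter and the hypothesis on $l_1+l_2$ enters. We would bound $a^{l_1+l_2}+a^{l_1+l_3}+a^{l_2+l_3}$ by a majorization (Karamata) argument. The vector $(l_1+l_2,l_1+l_3,l_2+l_3)$ with $l_1+l_2\ge 8$ is majorized by an extremal configuration, and convexity of $t\mapsto a^t$ lets us reduce to the extreme exponent pattern. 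AM-GM handles the mixed lower-order terms.

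\emph{Main obstacle.} The hardest part is this final inequality for small $m$ (especially $m=3$, i.e.\ $a=2$) near the boundary cases $l_1+l_2\in\{6,8\}$, where the margin is thin. Our plan there is as follows.
\begin{itemize}
\item When $l_1+l_2\ge 8$, reduce via Karamata to finitely many extremal $(l_1,l_2)$ patterns and verify them directly, treating general $l_3$ through monotonicity in $l_3$ (dividing both sides by $a^{l_3}$).
\item When $l_1+l_2=6$, the pairs are $(2,4)$ and $(3,3)$; handle them by explicit polynomial inequalities in $a$.
\item For $l_3>l_2$, the extra factor of $a$ supplies the slack needed at $a=2$.
\item For $l_3=l_2$, the inequality only holds once $a\ge 4$, which matches the stated bound $m\ge5$.
\end{itemize}
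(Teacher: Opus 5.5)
Your reduction to $(\ast)$ is the same as the paper's. The constant case is trivial; otherwise Lemma~\ref{lem: DPconnection}, with $C=(m-1)^{n-5}(m-2)^2$ from Lemma~\ref{thm: countMath} and $d\ge 1$, does the rest. The gap is that $(\ast)$ carries all the remaining content, and your plan for it rests on a sign error.

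Since $(-1)^{l_i+1}=-\epsilon$, each factor of the first product is $a(a^{l_i}-\epsilon)$, not $a(a^{l_i}+\epsilon)$. With the correct sign, the terms $\mp\epsilon\sum_{i<j}a^{l_i+l_j}$ coming from the two products cancel exactly. This gives $P(G,m)=\frac{1}{m}\bigl[a^{n}+a\sum_i a^{l_i}+\epsilon a(a-1)\bigr]$, and hence
\[
P(G,m)-P_{DP}(G,m)=(m-1)^{l_1}+(m-1)^{l_2}+(m-1)^{l_3}+(-1)^{l_1}(m-3).
\]
So there is no $a^{l_2+l_3}$ term; the difference is linear in the $a^{l_i}$. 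This is not cosmetic. If the difference really had a dominant term of order $a^{l_2+l_3}$, then, as your own heuristic says, $(\ast)$ would require $a^{5-l_1}\lesssim (a-1)^2$. That is false for every $l_1\le 3$, because $a^{5-l_1}\ge a^2>(a-1)^2$. The terms $a^{l_1+l_2}$ and $a^{l_1+l_3}$ only enlarge the left-hand side. So neither the hypothesis on $l_1+l_2$ nor a Karamata reduction could handle cases such as $\Theta(1,7,7)$ or $\Theta(2,4,6)$, both of which the lemma covers.

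With the correct expression, $(\ast)$ is a short estimate, which is exactly how the paper finishes. Use $(m-2)^2\ge 1$ and $(-1)^{l_1}(m-3)\le m-3<m-1$, then divide by $(m-1)^{n-5}$. It then suffices that $\sum_{i<j}(m-1)^{5-(l_i+l_j)}+(m-1)^{6-n}\le 1$. The controlling quantities are the pairwise sums $l_i+l_j$, not $l_1$ alone:
\begin{itemize}
\item If $l_1+l_2\ge 8$, all four terms are at most $(m-1)^{-3}\le \frac18$.
\item If $l_1+l_2=6$ and $l_3\ge l_2+2$, only one term equals $(m-1)^{-1}\le\frac12$; the others are at most $(m-1)^{-3}$, $(m-1)^{-3}$ and $(m-1)^{-5}$.
\item If $l_3=l_2$, up to three terms equal $(m-1)^{-1}$, which is why this crude estimate needs $m\ge 5$.
\end{itemize}
No majorization argument is needed.

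There are two smaller slips. First, for $l_1+l_2=6$ you omit the pair $(1,5)$, which is allowed since only $l_2\ge 2$ is assumed. Second, your assertion that for $l_3=l_2$ the inequality fails at $a=3$ is inaccurate. If you keep the factor $(m-2)^2$, then $(\ast)$ holds at $m=4$ for $\Theta(1,5,5)$, $\Theta(2,4,4)$ and $\Theta(3,3,3)$ (for the last, $80\le 324$), and fails only at $m=3$. This does not affect the lemma as stated.
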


\begin{proof}
Suppose $L$ is an arbitrary $m$-assignment of $G$ with $m \geq 3$. Note that if we can show $P(G,L) \geq P(G,m)$, then $P_{\ell}(G,m) = P(G,m)$. If $L$ assigns the same list to every vertex of $G$, $P(G,L)=P(G,m)$. So, we may assume there is an edge $qs \in E(G)$ such that $L(q) \neq L(s)$. By Lemma~\ref{lem: DPconnection}, Theorem~\ref{thm: DPformulas}, and Lemma~\ref{thm: countMath}, we have
\begin{align*} P(G,L)
&\geq \frac{1}{m}\bigg[(m-1)^{l_1+l_2+l_3}-(m-1)^{l_1}-(m-1)^{l_2}-(m-1)^{l_3}+2(-1)^{l_1+l_2+l_3}\bigg] \\
&+(m-1)^{l_1+l_2+l_3-5} (m-2)^2.
\end{align*}
We also know
\begin{align*}
&P(G,m) \\
&= \frac{((m-1)^{l_1+1}+(-1)^{l_1+1}(m-1))((m-1)^{l_2+1}+(-1)^{l_2+1}(m-1))((m-1)^{l_3+1}+(-1)^{l_3+1}(m-1))}{(m(m-1))^2} \\
&+ \frac{((m-1)^{l_1}+(-1)^{l_1}(m-1))((m-1)^{l_2}+(-1)^{l_2}(m-1))((m-1)^{l_3}+(-1)^{l_3}(m-1))}{m^2}.
\end{align*}
From this, a calculation shows
$$P(G,L)-P(G,m) \geq (m-1)^{l_1+l_2+l_3-5}(m-2)^2-(m-1)^{l_1}-(m-1)^{l_2}-(m-1)^{l_3}+(-1)^{l_1+1}(m-3).$$ 

Suppose $l_1+l_2 \geq 8$. Notice that 
\begin{align*}
&P(G,L)-P(G,m) \\
&\geq(m-1)^{l_1+l_2+l_3-5}-(m-1)^{l_1}-(m-1)^{l_2}-(m-1)^{l_3}-(m-3) \\
&=(m-1)^{l_1+l_2+l_3-5}\left(1-\frac{(m-1)^5}{(m-1)^{l_2+l_3}}-\frac{(m-1)^5}{(m-1)^{l_1+l_3}}-\frac{(m-1)^5}{(m-1)^{l_1+l_2}}-\frac{(m-1)^6}{(m-1)^{l_1+l_2+l_3}}\right)+2 \\
&\geq (m-1)^{l_1+l_2+l_3-5}(1-4(m-1)^{-3})+2 \\
&\geq 2^{l_1+l_2+l_3-5}\left(\frac{1}{2}\right)+2 > 0.
\end{align*}
Now, suppose $l_1+l_2=6$ and $l_3>l_2$.  Note this means $l_3 \geq l_2+2$ which means $l_1+l_3 \geq 8$ and $l_1+l_2+l_3 \geq 11$. Notice that 
\begin{align*}
&P(G,L)-P(G,m) \\
&\geq(m-1)^{l_1+l_2+l_3-5}-(m-1)^{l_1}-(m-1)^{l_2}-(m-1)^{l_3}-(m-3) \\
&=(m-1)^{l_1+l_2+l_3-5}\left(1-\frac{(m-1)^5}{(m-1)^{l_2+l_3}}-\frac{(m-1)^5}{(m-1)^{l_1+l_3}}-\frac{(m-1)^5}{(m-1)^{l_1+l_2}}-\frac{(m-1)^6}{(m-1)^{l_1+l_2+l_3}}\right)+2 \\
&\geq (m-1)^{l_1+l_2+l_3-5}(1- 2(m-1)^{-3} -(m-1)^{-1}-(m-1)^{-5}) + 2 \\
&\geq 2^{l_1+l_2+l_3-5}\left(\frac{7}{32}\right)+2 > 0.
\end{align*}
Now, suppose $l_1+l_2=6$, $l_3=l_2$, and $m \geq 5$.  Notice that 
\begin{align*}
&P(G,L)-P(G,m) \\
&\geq(m-1)^{l_1+l_2+l_3-5}-(m-1)^{l_1}-(m-1)^{l_2}-(m-1)^{l_3}-(m-3) \\
&=(m-1)^{l_1+l_2+l_3-5}\left(1-\frac{(m-1)^5}{(m-1)^{l_2+l_3}}-\frac{(m-1)^5}{(m-1)^{l_1+l_3}}-\frac{(m-1)^5}{(m-1)^{l_1+l_2}}-\frac{(m-1)^6}{(m-1)^{l_1+l_2+l_3}}\right)+2 \\
&\geq (m-1)^{l_1+l_2+l_3-5}(1 -3(m-1)^{-1}-(m-1)^{-3}) + 2 \\
&\geq 4^{l_1+l_2+l_3-5}\left(\frac{15}{64}\right)+2 > 0.
\end{align*}
Thus, in each case, $P(G,L) \geq P(G,m)$, and hence $P_{\ell}(G,m)=P(G,m)$.
\end{proof}

By Lemma~\ref{lem: DPchunk}, in order to show that $P_{\ell}(G,m)=P(G,m)$ whenever $m \geq 3$, it remains to prove the following statements.
\begin{enumerate}
\item For even $l_3$ satisfying $l_3 \geq 2$ and $m \geq 3$, $P_{\ell}(\Theta(2,2,l_3),m)=P(\Theta(2,2,l_3),m)$.
\item For odd $l_3$ satisfying $l_3 \geq 3$ and $m \geq 3$, $P_{\ell}(\Theta(1,3,l_3),m)=P(\Theta(1,3,l_3),m)$.
\item For $m \in \{3,4\}$, $P_{\ell}(\Theta(1,5,5),m)=P(\Theta(1,5,5),m)$.
\item For $m \in \{3,4\}$, $P_{\ell}(\Theta(2,4,4),m)=P(\Theta(2,4,4),m)$.
\item For $m \in \{3,4\}$, $P_{\ell}(\Theta(3,3,3),m)=P(\Theta(3,3,3),m)$.
\end{enumerate}
Statement~(i) has already been established in the literature.
\begin{theorem} \cite{AM25}
Suppose $G=\Theta(2,2,2k)$ with $k \in \N$.  Then $P_{\ell}(G,m)=P(G,m)$ whenever $m \geq 3$.
\end{theorem}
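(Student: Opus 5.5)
The plan is to count proper $L$-colorings of $G=\Theta(2,2,2k)$ directly, by conditioning on the colors of the two end vertices. The approach through Lemma~\ref{lem: DPconnection} is too weak here: with Theorem~\ref{thm: DPformulas} and Lemma~\ref{thm: countMath}, the lower bound from the proof of Lemma~\ref{lem: DPchunk} becomes
\[
(m-1)^{2k-1}(m-2)^2-2(m-1)^2-(m-1)^{2k}-(m-3),
\]
which is negative at $m=3$ for large $k$. The case $k=1$ is $K_{2,3}$ and is covered by Theorem~\ref{thm:examples}, so I assume $k\ge 2$.

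\textbf{Setup.} Let $u,v$ be the end vertices, let $a,b$ be the middle vertices of the two paths of length $2$, and let $Q$ be the path of length $2k$ from $u$ to $v$. For a proper $L$-coloring $f$ of $Q$, the number of extensions to $a$ is $w_a(f)=m-|L(a)\cap\{f(u),f(v)\}|$, and similarly $w_b(f)$ for $b$. Hence
\[
P(G,L)=\sum_{f} w_a(f)w_b(f).
\]
Each weight is at least $m-2$ when $f(u)\neq f(v)$ and at least $m-1$ when $f(u)=f(v)$. Write $N_{=}$ and $N_{\neq}$ for the numbers of $L$-colorings of $Q$ with $f(u)=f(v)$ and $f(u)\ne f(v)$, respectively. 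Then
\[
P(G,L)\ge (m-2)^2N_{\neq}+(m-1)^2N_{=}.
\]
When all lists are equal, these are exactly $P(C_{2k+1},m)$ and $P(C_{2k},m)/m\cdot m$ (colorings of the cycle obtained by identifying $u$ and $v$), and the bound is equality with $P(G,m)$.

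\textbf{Comparison step.} I would rewrite the bound as $(m-2)^2N+(2m-3)N_{=}$, where $N=N_{=}+N_{\neq}$. Greedy coloring along $Q$ gives $N\ge m(m-1)^{2k}$, with equality only under strong list coincidences along $Q$. The quantity $N_{=}$ counts the $L'$-colorings of the cycle $C_{2k}$ obtained by identifying $u$ and $v$, where the merged vertex receives the list $L(u)\cap L(v)$. If $L(u)=L(v)$, Theorem~\ref{thm:examples}\ref{examplesb} gives $N_{=}\ge P(C_{2k},m)$, and the inequality $P(G,L)\ge P(G,m)$ follows immediately.

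\textbf{Main obstacle: $L(u)\neq L(v)$.} Here $N_{=}$ can be small, so the loss must be paid for by slack elsewhere. I would proceed in the following order.
\begin{enumerate}
\item Let $d=|L(u)\setminus L(v)|$. Passing from the merged cycle count to $N_{=}$ loses at most about $d(m-1)^{2k-2}$ colorings.
\item Each color pair $(x,y)$ with $x\in L(u)\setminus L(v)$ and $y\in L(v)$ enlarges $N_{\neq}$ beyond its value in the equal-list case. There are at least $(m-1)^{2k-2}$ path colorings per such pair (color inward from both ends).
\item Any color of $\{f(u),f(v)\}$ missing from $L(a)$ or $L(b)$ adds further to the weights.
\end{enumerate}
Balancing the loss in the first item against the gains in the second and third, weighted by $(m-2)^2$ against $(2m-3)$, should be a clean comparison for $m\ge 4$. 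It is delicate at $m=3$, where $(m-2)^2=1$. At $m=3$ I would first try to get the missing slack by an induction on $k$: contract two consecutive interior edges of $Q$ whose lists coincide, and handle separately the case where consecutive lists along $Q$ differ, which itself forces $N$ well above $m(m-1)^{2k}$.
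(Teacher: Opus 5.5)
The case $L(u)=L(v)$ is proved correctly: the identity $P(G,L)=\sum_f w_a(f)w_b(f)$, the bound $P(G,L)\ge (m-2)^2N+(2m-3)N_{=}$, and $P(G,m)=(m-2)^2m(m-1)^{2k}+(2m-3)P(C_{2k},m)$ are all right, and with $N\ge m(m-1)^{2k}$ and Theorem~\ref{thm:examples} they settle it. The gap is the case $L(u)\neq L(v)$, which is where all the difficulty lies, and the one quantitative claim you make there is false. Write $Q=x_0x_1\cdots x_{2k}$ with $x_0=u$ and $x_{2k}=v$. Give $x_0,\dots,x_{2k-1}$ the list $[m]$ and give $v$ the list $\{2,\dots,m+1\}$. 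Then $N_{=}=\frac{m-1}{m}P(C_{2k},m)$, so deleting one color from the merged vertex costs $P(C_{2k},m)/m\approx (m-1)^{2k}/m$, not $(m-1)^{2k-2}$. Meanwhile $N=m(m-1)^{2k}+(m-1)^{2k-1}$ grows only slightly. The extra unequal pairs in your item (ii) only move mass from $N_{=}$ to $N_{\neq}$; in the form $(m-2)^2N+(2m-3)N_{=}$, only growth of $N$ counts as gain. At $m=3$ your bound in this example is $N+3N_{=}=\frac{11}{2}4^k+4<6\cdot 4^k+6=P(G,3)$, so no bookkeeping of (i)--(ii) can work there. At $m=4$, $k=2$ it gives $1719$ against $P(G,4)=1716$, so even the range $m\ge 4$ needs a precise argument you have not supplied, not a ``clean comparison''.

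Your fallback at $m=3$ handles that example but not the residual case. The contraction step is in fact valid. If $x_{i-1},x_i,x_{i+1}$ share a list $A$, then $P(G,L)=(m-2)P(G-x_i,L)+P(G',L')$, where $G'\cong\Theta(2,2,2k-2)$ is obtained by merging the three vertices into one vertex with list $A$. Since $G-x_i$ is a $4$-cycle with pendant paths, induction reduces you to assignments in which no three consecutive vertices of $Q$ share a list. But there a large $N$ does not suffice. Take $k=2$ and lists $\{1,2,3\},\{1,2,3\},\{1,2,4\},\{4,5,6\},\{4,5,6\}$ along $Q$. Then $N=72=\frac32\cdot m(m-1)^{2k}$ and $N_{=}=0$, so your bound gives $72<102=P(G,3)$. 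The missing idea is to use the weights $w_a,w_b$ quantitatively; your item (iii) only mentions them. Since $|L(u)\cup L(v)|=m+d$, each of $L(a)$ and $L(b)$ omits at least $d$ colors of $L(u)\cup L(v)$. Every coloring of $Q$ that puts an omitted color on $u$ or $v$ therefore has weight at least $m-1$, rather than $m-2$, at that middle vertex. You must bound the number of such colorings from below (e.g.\ via Lemma~\ref{lem: even path simple} applied to $Q$). Then show that this surplus plus $(m-2)^2\bigl(N-m(m-1)^{2k}\bigr)$ covers $(2m-3)\bigl(P(C_{2k},m)-N_{=}\bigr)$. As written, the proof is complete only when $L(u)=L(v)$.
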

We now turn our attention to Statement~(ii).  We begin by establishing some notation and conventions that will be used for the remainder of the paper.  When $G=\Theta(l_1,l_2,l_3)$, let $x$ and $y$ denote the common end vertices of the three paths, and let $S_1$, $S_2$, and $S_3$ denote the paths of lengths $l_1$, $l_2$, and $l_3$, respectively.  If $L$ is an $m$-assignment for $G$, $i \in [3]$, and $(c,d) \in L(x) \times L(y)$, let $L_i$ denote the restriction of $L$ to $V(S_i)$, and let $N_i(c,d)$ denote the number of proper $L_i$-colorings of $S_i$ that color $x$ with $c$ and $y$ with $d$.

We are now ready to present two tools that will be used frequently throughout the remainder of the paper.

\begin{lemma} \cite{AM25} \label{lem: sum}
Suppose $G=\Theta(l_1,l_2,l_3)$, and let $L$ be an $m$-assignment for $G$.  Then,
\[
P(G,L)=\sum_{(c,d) \in L(x) \times L(y)} \prod_{i=1}^3 N_i(c,d).
\]
\end{lemma}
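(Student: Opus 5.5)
The plan is to condition on the colors of the two end vertices $x$ and $y$, since the three paths $S_1,S_2,S_3$ meet only at $x$ and $y$. Fix an $m$-assignment $L$ for $G$. Every proper $L$-coloring of $G$ assigns some pair $(c,d)\in L(x)\times L(y)$ to $(x,y)$, so I would partition the set of proper $L$-colorings according to this pair and show that, for each fixed pair, the number of extensions factors over the three paths.

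For the factorization step, I would observe that the subgraph obtained by deleting $x$ and $y$ is the disjoint union of the interiors of $S_1$, $S_2$, and $S_3$. There is no edge of $G$ joining interior vertices of distinct paths. The only edges incident to the interior of $S_i$ are edges of $S_i$ itself, namely interior edges and the edges joining the path's ends to $x$ or $y$.

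Hence, once $x$ is colored $c$ and $y$ is colored $d$, a coloring of the remaining vertices is a proper $L$-coloring of $G$ exactly when, for each $i$, its restriction to $V(S_i)$, together with the colors $c$ and $d$, is a proper $L_i$-coloring of $S_i$. These three conditions are independent. So the number of proper $L$-colorings with $x\mapsto c$ and $y\mapsto d$ equals $\prod_{i=1}^3 N_i(c,d)$, and summing over all pairs gives the formula.

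Two small points need care. First, if $c=d$, the product should count zero colorings whenever $x$ and $y$ are adjacent. This is automatic when $l_1=1$: then $S_1$ is the single edge $xy$, so $N_1(c,c)=0$. When $l_1\ge 2$, the vertices $x$ and $y$ are not adjacent and $c=d$ is allowed. Second, each interior vertex lies on exactly one path, so no vertex is counted twice. I do not expect any real obstacle here; the main thing is stating the decomposition cleanly, including the degenerate case $l_1=1$.
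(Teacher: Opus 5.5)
Your proof is correct: conditioning on the pair $(c,d)$ and gluing three proper path colorings that agree at $x$ and $y$ gives the required bijection, and $N_1(c,c)=0$ correctly covers the case $l_1=1$. The paper itself does not prove this lemma but quotes it from Allred and Mudrock, and your conditioning-on-endpoints argument is the standard reasoning behind it.
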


\begin{lemma} \cite{CL26} \label{lem: even path simple}
Let $P$ be a path with $k \in \N$ edges whose vertices, in order, are $x_0,x_1,\ldots,x_k$.  Suppose $L$ is an $m$-assignment for $P$ with $m \geq 2$.  For each $(c,d) \in L(x_0) \times L(x_k)$, let $N(c,d)$ denote the number of proper $L$-colorings of $P$ that color $x_0$ with $c$ and $x_k$ with $d$.  Then,
\[
N(c,d) \geq \min \left\{
\frac{(m-1)^k-(-1)^k}{m},
\frac{(m-1)^k-(-1)^k}{m}+(-1)^k
\right\}.
\]
Moreover, there is a partition $\{A,B\}$ of $L(x_0)\times L(x_k)$ such that $|A|=m$, $|B|=m(m-1)$, and
\[
N(c,d)\geq
\begin{cases}
\displaystyle \frac{(m-1)^k-(-1)^k}{m}+(-1)^k,
& \text{if } (c,d)\in A, \\[1em]
\displaystyle \frac{(m-1)^k-(-1)^k}{m},
& \text{if } (c,d)\in B.
\end{cases}
\]
\end{lemma}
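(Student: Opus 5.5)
We prove the lemma by induction on $k$, tracking, for a fixed first color $c \in L(x_0)$, the vector of counts $N(c,\cdot)$ on the last vertex. For $j\ge 1$ and $e \in L(x_j)$, let $N_j(c,e)$ denote the number of proper $L$-colorings of the subpath $x_0x_1\cdots x_j$ that color $x_0$ with $c$ and $x_j$ with $e$. Set $a_j=\frac{(m-1)^j-(-1)^j}{m}$ and $b_j=a_j+(-1)^j$. These are exactly the counts for identical lists: $a_j$ when the two end colors differ, and $b_j$ when they coincide. They satisfy
\[
b_{j+1}=(m-1)a_j,\qquad a_{j+1}=b_j+(m-2)a_j .
\]

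The inductive claim for fixed $c$ is that, for every $j\ge1$, there is at most one color $e^\ast_j\in L(x_j)$ with $N_j(c,e^\ast_j)$ possibly below $\max\{a_j,b_j\}$. At that color we only guarantee $N_j(c,e^\ast_j)\ge\min\{a_j,b_j\}$, and every other color receives at least $\max\{a_j,b_j\}$. More precisely, exactly one color is assigned the "$b_j$-bound" and the remaining $m-1$ colors are assigned the "$a_j$-bound", with $N_j$ at least the assigned bound.

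For the base case $j=1$, we have $N_1(c,e)=1=a_1$ if $e\ne c$ and $N_1(c,c)=0=b_1$. If $c\in L(x_1)$, designate $c$ as the $b$-color. Otherwise every color gets at least $1\ge \max\{a_1,b_1\}$, and we designate an arbitrary color as the $b$-color.

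For the inductive step, use
\[
N_{j+1}(c,e)=\sum_{f\in L(x_j),\,f\ne e}N_j(c,f).
\]
By induction, the total $\sum_{f} N_j(c,f)$ is at least $b_j+(m-1)a_j=a_{j+1}+a_j$. Hence
\[
N_{j+1}(c,e)\ge (b_j+(m-1)a_j)-N_j(c,e)\quad\text{if } e\in L(x_j).
\]
This is sloppy, since $N_j(c,e)$ could be large, so we argue directly by cases instead.

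If $e\notin L(x_j)$, the sum includes all of $L(x_j)$, so $N_{j+1}(c,e)\ge b_j+(m-1)a_j=a_{j+1}+a_j\ge\max\{a_{j+1},b_{j+1}\}$, since $a_j\ge0$ and $b_{j+1}=(m-1)a_j\le a_{j+1}+a_j$ (equivalent to $(m-2)a_j\le a_{j+1}$, which holds because $b_j\ge0$). If $e=e^\ast_j$ is the $b$-color, we get $N_{j+1}\ge(m-1)a_j=b_{j+1}$. If $e$ is any other color of $L(x_j)$, we get $N_{j+1}\ge b_j+(m-2)a_j=a_{j+1}$.

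So at stage $j+1$, at most one color, namely $e^\ast_j$ if it lies in $L(x_{j+1})$, has the $b$-bound, and all others have at least the $a$-bound. If $e^\ast_j\notin L(x_{j+1})$, designate any color as the $b$-color; this is harmless because its true value is at least $a_{j+1}+a_j\ge b_{j+1}$. This completes the induction, and for each $c$ we get exactly one designated $d_c\in L(x_k)$ with $N(c,d_c)\ge b_k$ and $N(c,d)\ge a_k$ for $d\ne d_c$. The first assertion, $N(c,d)\ge\min\{a_k,b_k\}$, follows immediately.

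For the partition, take $A=\{(c,d_c): c\in L(x_0)\}$ and $B$ its complement. Then $|A|=m$ and $|B|=m^2-m$, and the stated bounds hold by construction.

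The main obstacle is the bookkeeping when $e^\ast_j\notin L(x_{j+1})$, or in the base case when $c\notin L(x_1)$. There the $b$-designation must be reassigned to a color whose true count may only be known to be at least $a$, so we need $a_{j+1}\ge b_{j+1}$ or an extra margin. The inequality $a_{j+1}+a_j\ge b_{j+1}$ supplies this margin for colors outside $L(x_j)$. For colors inside $L(x_j)$ other than $e^\ast_j$, we have exactly $a_{j+1}$, which may be smaller than $b_{j+1}$ when $j+1$ is even. To handle this, when $e^\ast_j\notin L(x_{j+1})$ we pick the designated color in $L(x_{j+1})\setminus L(x_j)$ if that set is nonempty. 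If it is empty, then $L(x_{j+1})=L(x_j)$, which contradicts $e^\ast_j\notin L(x_{j+1})$. So a valid choice always exists, and the induction closes.
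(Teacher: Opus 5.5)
Your proof is correct. The paper does not prove this lemma itself: it is quoted from \cite{CL26} and used as a black box, so there is no in-paper argument to compare your route against.

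Your invariant is: for each fixed $c$, exactly one $e^\ast_j\in L(x_j)$ carries the bound $b_j$, and the other $m-1$ colors carry $a_j$. It is preserved by $N_{j+1}(c,e)=\sum_{f\in L(x_j),\,f\neq e}N_j(c,f)$, using three facts:
\begin{itemize}
\item the identities $b_{j+1}=(m-1)a_j$ and $a_{j+1}=b_j+(m-2)a_j$;
\item the nonnegativity $a_j,b_j\ge 0$, which gives $a_{j+1}+a_j\ge\max\{a_{j+1},b_{j+1}\}$ for colors outside $L(x_j)$;
\item when $e^\ast_j\notin L(x_{j+1})$, choosing $e^\ast_{j+1}\in L(x_{j+1})\setminus L(x_j)$, a set that is nonempty because both lists have exactly $m$ colors.
\end{itemize}
The argument gives slightly more than the lemma asks: $A$ can be taken to contain exactly one pair from each row $\{c\}\times L(x_k)$. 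That strengthening is harmless, since the applications in the paper only use $|A|=m$.

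Two intermediate sentences are false as written and should be removed from a final write-up, even though your last paragraph repairs the second one.
\begin{itemize}
\item The informal version of the invariant, ``at most one color with $N_j$ possibly below $\max\{a_j,b_j\}$,'' fails for even $j$. There $b_j=a_j+1$, so all $m-1$ non-designated colors may sit at $a_j<\max\{a_j,b_j\}$; identical lists already show this for $m\ge 3$. Only your ``more precisely'' formulation is the right invariant.
\item ``If $e^\ast_j\notin L(x_{j+1})$, designate any color as the $b$-color; this is harmless'' is unjustified. A color in $L(x_j)\setminus\{e^\ast_j\}$ is only guaranteed $a_{j+1}$, which equals $b_{j+1}-1$ when $j+1$ is even.
\end{itemize}
State the correct choice rule, $e^\ast_{j+1}\in L(x_{j+1})\setminus L(x_j)$, directly in the inductive step. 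Likewise, drop the ``sloppy'' inequality $N_{j+1}(c,e)\ge (b_j+(m-1)a_j)-N_j(c,e)$; it is true but plays no role.
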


We now return to Statement~(ii).  To prove Statement~(ii), it suffices to show that for any $m \geq 3$ and any $m$-assignment $L$ for $G=\Theta(1,3,l_3)$ with $l_3$ odd, $P(G,L) \geq P(G,m)$.  We begin with the case in which $N_2(c,d) \geq ((m-1)^3+1)/m$ for all $(c,d) \in L(x) \times L(y)$ with $c \neq d$.

\begin{lemma} \label{lem: lucasftw}
Suppose $G=\Theta(1,3,l_3)$, where $l_3$ is odd.  Suppose $L$ is an $m$-assignment for $G$ with $m \geq 3$ such that $N_2(c,d) \geq ((m-1)^3+1)/m$ for all $(c,d) \in L(x) \times L(y)$ with $c \neq d$.  Then, $P(G,L)\geq P(G,m)$.
\end{lemma}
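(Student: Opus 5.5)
The plan is to exploit the fact that $l_1=1$, so $x$ and $y$ are adjacent and $S_1$ is just the edge $xy$. For every $(c,d)\in L(x)\times L(y)$ we then have $N_1(c,d)=1$ if $c\neq d$ and $N_1(c,d)=0$ if $c=d$. Lemma~\ref{lem: sum} therefore reduces to
\[
P(G,L)=\sum_{\substack{(c,d)\in L(x)\times L(y)\\ c\neq d}} N_2(c,d)\,N_3(c,d).
\]
By hypothesis, each $N_2(c,d)$ in this sum is at least $((m-1)^3+1)/m=m^2-3m+3$. Since every $N_3(c,d)\ge 0$, we can factor this bound out:
\[
P(G,L)\ \ge\ \frac{(m-1)^3+1}{m}\sum_{\substack{(c,d)\in L(x)\times L(y)\\ c\neq d}} N_3(c,d).
\]

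Next I would interpret the remaining sum combinatorially. Let $C$ be the subgraph of $G$ formed by $S_3$ together with the edge $xy$; it is a cycle of length $l_3+1$, which is even because $l_3$ is odd. The sum $\sum_{c\neq d}N_3(c,d)$ counts exactly the proper colorings of $C$ under the restriction of $L$ to $V(C)$. That restriction is an $m$-assignment for $C$. By Theorem~\ref{thm:examples}\ref{examplesb}, cycles are enumeratively chromatic-choosable, so this count is at least
\[
P(C_{l_3+1},m)=(m-1)^{l_3+1}+(m-1).
\]
Combining the two displays gives $P(G,L)\ge \frac{(m-1)^3+1}{m}\bigl((m-1)^{l_3+1}+(m-1)\bigr)$.

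Finally, I would check that the right-hand side equals $P(G,m)$. Take the assignment giving every vertex the list $[m]$. For any $c\neq d$, the number of proper colorings of the path $S_2$ (three edges) with ends colored $c$ and $d$ is exactly $((m-1)^3+1)/m$, independent of the particular pair. For instance, this follows from the cycle formula $P(C_4,m)=m(m-1)\cdot((m-1)^3+1)/m$ together with symmetry. Hence $P(G,m)=\frac{(m-1)^3+1}{m}\,P(C_{l_3+1},m)$, and we get $P(G,L)\ge P(G,m)$ as required.

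There is no serious obstacle in this argument. The only points needing care are the identification of $\sum_{c\neq d}N_3(c,d)$ with the list-coloring count of the even cycle, and the exact evaluation of the uniform-list $N_2$ value.
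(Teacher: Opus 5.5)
Your proposal is correct and follows the paper's proof essentially step for step: reduce via Lemma~\ref{lem: sum} using $N_1$ as the indicator of $c\neq d$, factor out the hypothesized lower bound on $N_2$, and identify $\sum_{c\neq d}N_3(c,d)$ with the number of proper $L_3$-colorings of the even cycle formed by $S_3$ and $xy$, so that enumerative chromatic choosability of cycles applies. The only cosmetic difference is that you verify $P(G,m)=\frac{(m-1)^3+1}{m}\bigl((m-1)^{l_3+1}+(m-1)\bigr)$ by a symmetry argument with constant lists, whereas the paper obtains it by substituting into the theta-graph chromatic polynomial formula.
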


\begin{proof}
We must show that $P(G,L) \geq P(G,m)$. By the formula for the chromatic polynomial of a theta graph, we know that
$$P(G,m)=\left(\frac{(m-1)^3+1}{m}\right)\left((m-1)^{l_3+1}+(m-1)\right).$$

Furthermore, by Lemma~\ref{lem: sum},
$$P(G,L) = \sum_{(c,d) \in L(x) \times L(y)} N_1(c,d)N_2(c,d)N_3(c,d).$$
Since $S_1$ is a single edge, $N_1(c,d)=0$ when $c=d$ and $N_1(c,d)=1$ when $c \neq d$. Thus,
$$P(G,L) = \sum_{(c,d) \in L(x) \times L(y),\, c \neq d} N_2(c,d)N_3(c,d).$$
Then, by our hypothesis,
$$P(G,L) \geq \frac{(m-1)^3+1}{m} \sum_{(c,d) \in L(x) \times L(y),\, c \neq d} N_3(c,d).$$

Now, suppose $H$ is the cycle obtained from the path $S_3$ by adding an edge between $x$ and $y$. Notice that
$$P(H,L_3)=\sum_{(c,d) \in L(x) \times L(y),\, c \neq d} N_3(c,d).$$
Since cycles are enumeratively chromatic-choosable by Theorem~\ref{thm:examples} and $H$ is a $(l_3+1)$-cycle,
$$P(H,L_3) \geq P(C_{l_3+1},m)=(m-1)^{l_3+1}+(m-1),$$
where the final equality follows because $l_3$ is odd. Consequently,
$$P(G,L) \geq \frac{(m-1)^3+1}{m}\left((m-1)^{l_3+1}+(m-1)\right)=P(G,m).$$
\end{proof}

In order to handle the remaining list assignments, we prove the following lemma.

\begin{lemma} \label{lem: p4}
Suppose $G=P_4$ and the vertices of $G$, in order, are $x_0,x_1,x_2,x_3$. Suppose $L$ is an $m$-assignment for $G$ with $m \geq 3$, and for each $(c,d) \in L(x_0) \times L(x_3)$, let $N(c,d)$ be the number of proper $L$-colorings of $G$ that color $x_0$ with $c$ and $x_3$ with $d$. Let $I=L(x_1) \cap L(x_2)$. If there exists $(\alpha,\beta) \in L(x_0) \times L(x_3)$ such that $\alpha \neq \beta$ and
$$N(\alpha,\beta)=\frac{(m-1)^3+1}{m}-1=(m-1)(m-2),$$
then the following statements hold.

(i) We have $|I|=m-1$, $L(x_1)=I \cup \{\alpha\}$, and $L(x_2)=I \cup \{\beta\}$.

(ii) For any $(c,d) \in L(x_0) \times L(x_3)-\{(\alpha,\beta)\}$ with $c \neq d$, $N(c,d)>(m-1)(m-2)$.

(iii) If $L(x_0)=L(x_3)$, then $N(\beta,\alpha)=m^2-m+1$, $N(\beta,c) \geq m^2-2m+2$ whenever $c \in L(x_3)-\{\alpha,\beta\}$, and $N(c,\alpha) \geq m^2-2m+2$ whenever $c \in L(x_0)-\{\alpha,\beta\}$.
\end{lemma}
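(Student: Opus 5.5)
The plan is to reduce everything to one explicit counting formula for $N(c,d)$ and then read off (i)--(iii) from its equality and near-equality cases.

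\textbf{The counting formula.} Fix $(c,d)\in L(x_0)\times L(x_3)$ and put $A=L(x_1)-\{c\}$ and $B=L(x_2)-\{d\}$. A proper $L$-coloring with $x_0\mapsto c$ and $x_3\mapsto d$ is exactly a pair $(a,b)\in A\times B$ with $a\neq b$. Hence
\[
N(c,d)=|A||B|-|A\cap B|.
\]
We have $|A|,|B|\in\{m-1,m\}$ and $|A\cap B|\le\min\{|A|,|B|\}$.

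\textbf{When the minimum $(m-1)(m-2)$ is attained.}
\begin{itemize}
\item If $|A|=|B|=m-1$, then $N(c,d)\ge (m-1)^2-(m-1)=(m-1)(m-2)$. Equality holds exactly when $A=B$.
\item If, say, $|A|=m$, then $N(c,d)\ge m|B|-|B|=(m-1)|B|\ge (m-1)^2>(m-1)(m-2)$. The case $|B|=m$ is symmetric.
\end{itemize}
So $N(c,d)=(m-1)(m-2)$ holds if and only if $c\in L(x_1)$, $d\in L(x_2)$, and $L(x_1)-\{c\}=L(x_2)-\{d\}$. Verifying this characterization is the main step; the rest is bookkeeping.

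\textbf{Part (i).} Apply the characterization to $(\alpha,\beta)$, and let $J=L(x_1)-\{\alpha\}=L(x_2)-\{\beta\}$, a set of size $m-1$. Then $L(x_1)=J\cup\{\alpha\}$ and $L(x_2)=J\cup\{\beta\}$. Since $\alpha\neq\beta$, $\alpha\notin J$ and $\beta\notin J$, we get $\alpha\notin L(x_2)$ and $\beta\notin L(x_1)$. Therefore $I=J$, which gives (i).

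\textbf{Part (ii).} Suppose $N(c,d)=(m-1)(m-2)$ for some $(c,d)$ with $c\neq d$. By the characterization, $c\in L(x_1)$, $d\in L(x_2)$, and $L(x_1)-\{c\}=L(x_2)-\{d\}$.
\begin{itemize}
\item If $c\neq\alpha$, then $\alpha$ lies in $L(x_1)-\{c\}$ but not in $L(x_2)$, a contradiction. So $c=\alpha$.
\item By the symmetric argument with $\beta$, $d=\beta$.
\end{itemize}
Combined with the lower bound $N\ge(m-1)(m-2)$, this gives strict inequality for every other pair, which is (ii).

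\textbf{Part (iii).} Assume $L(x_0)=L(x_3)$, so that $\beta\in L(x_0)$ and $\alpha\in L(x_3)$. We use (i) throughout.
\begin{itemize}
\item For $N(\beta,\alpha)$: $\beta\notin L(x_1)$ gives $A=L(x_1)$, and $\alpha\notin L(x_2)$ gives $B=L(x_2)$. Also $A\cap B=I$ has size $m-1$. So $N(\beta,\alpha)=m^2-(m-1)=m^2-m+1$.
\item For $N(\beta,c)$ with $c\in L(x_3)-\{\alpha,\beta\}$: we have $A=L(x_1)$ and $B=L(x_2)-\{c\}$.
  \begin{itemize}
  \item If $c\in I$, then $|B|=m-1$ and $A\cap B=I-\{c\}$ has size $m-2$. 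So $N(\beta,c)=m(m-1)-(m-2)=m^2-2m+2$.
  \item If $c\notin I$, then $c\notin L(x_2)$, so $|B|=m$ and $|A\cap B|=m-1$. This gives $m^2-m+1\ge m^2-2m+2$.
  \end{itemize}
\item For $N(c,\alpha)$ with $c\in L(x_0)-\{\alpha,\beta\}$: the argument is symmetric, exchanging the roles of $x_1$ and $x_2$.
\end{itemize}
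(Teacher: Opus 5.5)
Your proof is correct and complete. It rests on the same identity the paper uses, $N(c,d)=|A||B|-|A\cap B|$ with $A=L(x_1)-\{c\}$ and $B=L(x_2)-\{d\}$, and your part (iii) is essentially the paper's computation. The difference is in how you organize (i) and (ii).

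You first prove one equality characterization: $N(c,d)=(m-1)(m-2)$ if and only if $c\in L(x_1)$, $d\in L(x_2)$, and $L(x_1)-\{c\}=L(x_2)-\{d\}$. Both (i) and (ii) then follow from it. The paper instead proves (i) by a chain of separate contradiction arguments: first $|I|\geq m-1$, then $\alpha\in L(x_1)$ and $\beta\in L(x_2)$, then $|I|\neq m$, then $\alpha,\beta\notin I$. It proves (ii) by using the structure from (i) to compute or bound $N(c,d)$ explicitly in four cases: $c=\alpha$; $d=\beta$; $c,d\notin I$; and $\{c,d\}\cap I\neq\emptyset$.

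Your version is shorter and shows why the minimizing pair is unique: any other minimizer would force $\alpha\in L(x_2)$ or $\beta\in L(x_1)$. It also never uses $c\neq d$, so it gives $N(c,d)>(m-1)(m-2)$ for every pair other than $(\alpha,\beta)$. The paper's case analysis produces explicit values such as $(m-1)^2$ and $(m-1)^2-(m-2)$. However, only the strict inequality $N(c,d)\geq (m-1)(m-2)+1$ is used later in Lemma~\ref{lem: finish}, so your approach loses nothing.
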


\begin{proof}
First, we prove statement~(i). Let $L'(x_1)=L(x_1)-\{\alpha\}$ and $L'(x_2)=L(x_2)-\{\beta\}$. Then,
$$N(\alpha,\beta)=|L'(x_1)||L'(x_2)|-|L'(x_1)\cap L'(x_2)|.$$
Notice that $|L'(x_i)|\in\{m-1,m\}$ for each $i\in[2]$. If $|I|<m-1$, then
$$(m-1)(m-2)=|L'(x_1)||L'(x_2)|-|L'(x_1)\cap L'(x_2)|\geq (m-1)^2-(m-2),$$
which is a contradiction. Consequently, $|I|\in\{m-1,m\}$.

We claim that $\alpha\in L(x_1)$. Suppose for the sake of contradiction that $\alpha\notin L(x_1)$. Then $|L'(x_1)|=m$. If $\beta\in L(x_2)$, then
$$|L'(x_1)||L'(x_2)|-|L'(x_1)\cap L'(x_2)|\geq m(m-1)-(m-1)>(m-1)(m-2),$$
which is a contradiction. If $\beta\notin L(x_2)$, then
$$|L'(x_1)||L'(x_2)|-|L'(x_1)\cap L'(x_2)|\geq m^2-m>(m-1)(m-2),$$
which is also a contradiction. Thus, $\alpha\in L(x_1)$. A similar argument shows that $\beta\in L(x_2)$.

Now, suppose for the sake of contradiction that $|I|=m$. Then $L(x_1)=L(x_2)$, and hence $|L'(x_1)\cap L'(x_2)|=m-2$. Therefore,
$$|L'(x_1)||L'(x_2)|-|L'(x_1)\cap L'(x_2)|=(m-1)^2-(m-2)>(m-1)(m-2),$$
which is a contradiction. Thus, $|I|=m-1$.

To show that $L(x_1)=I\cup\{\alpha\}$, it remains only to show that $\alpha\notin I$. Suppose, to the contrary, that $\alpha\in I$. Then $|L'(x_1)\cap L'(x_2)|\leq m-2$, and hence
$$|L'(x_1)||L'(x_2)|-|L'(x_1)\cap L'(x_2)|\geq (m-1)^2-(m-2)>(m-1)(m-2),$$
which is a contradiction. Thus, $\alpha\notin I$. A similar argument shows that $\beta\notin I$. Therefore, $L(x_1)=I\cup\{\alpha\}$ and $L(x_2)=I\cup\{\beta\}$, as desired.

We now prove statement~(ii). Suppose $(c,d)\in L(x_0)\times L(x_3)-\{(\alpha,\beta)\}$ with $c\neq d$. Let $L''(x_1)=L(x_1)-\{c\}$ and $L''(x_2)=L(x_2)-\{d\}$. Then
$$N(c,d)=|L''(x_1)||L''(x_2)|-|L''(x_1)\cap L''(x_2)|.$$

Suppose first that $c=\alpha$. Then $|L''(x_1)|=m-1$ and $d\neq\beta$. If $d\in I$, then
$$N(c,d)=(m-1)^2-(m-2)>(m-1)(m-2).$$
If $d\notin I$, then
$$N(c,d)=m(m-1)-(m-1)>(m-1)(m-2).$$
Thus, when $c=\alpha$, $N(c,d)>(m-1)(m-2)$. A similar argument shows that the desired inequality holds when $d=\beta$.

Now, suppose $c\notin I\cup\{\alpha\}$ and $d\notin I\cup\{\beta\}$. Then,
$$N(c,d)=m^2-(m-1)>(m-1)(m-2).$$
Finally, if $|\{c,d\}\cap I|\geq 1$, then
$$N(c,d)\geq (m-1)^2-(m-2)>(m-1)(m-2).$$
This proves statement~(ii).

Finally, we prove statement~(iii). Suppose $L(x_0)=L(x_3)$. If $x_0$ is colored with $\beta$ and $x_3$ is colored with $\alpha$, then
$$N(\beta,\alpha)=|L(x_1)||L(x_2)|-|L(x_1)\cap L(x_2)|=m^2-m+1.$$

Now, suppose $c\in L(x_3)-\{\alpha,\beta\}$. If $x_0$ is colored with $\beta$ and $x_3$ is colored with $c$, we may complete a proper $L$-coloring of $G$ by coloring $x_1$ with an element $d$ of $L(x_1)$ and then coloring $x_2$ with an element of $L(x_2)-\{c,d\}$. We consider two cases. If $c\notin L(x_2)$, then
$$N(\beta,c)=|L(x_1)||L(x_2)|-|L(x_1)\cap L(x_2)|=m^2-m+1\geq m^2-2m+2.$$
If $c\in L(x_2)$, then $c\in I$, and
$$N(\beta,c)=|L(x_1)||L(x_2)-\{c\}|-|L(x_1)\cap(L(x_2)-\{c\})|=m^2-2m+2.$$
A similar argument shows that $N(c,\alpha)\geq m^2-2m+2$ whenever $c\in L(x_0)-\{\alpha,\beta\}$.
\end{proof}

We are now ready to complete the proof of Statement~(ii).

\begin{lemma} \label{lem: finish}
Suppose $G=\Theta(1,3,l_3)$ where $l_3$ is odd. Suppose $L$ is an $m$-assignment for $G$ with $m \geq 3$ such that there exists $(\alpha,\beta) \in L(x) \times L(y)$ with $\alpha \neq \beta$ and
$$N_2(\alpha,\beta)=(m-1)(m-2).$$
Then, $P(G,L)\geq P(G,m)$.
\end{lemma}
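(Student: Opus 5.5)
The plan is to start from the decomposition of Lemma~\ref{lem: sum}, exactly as in the proof of Lemma~\ref{lem: lucasftw}, and to treat the single deficient pair $(\alpha,\beta)$ as a correction term. Write $T=((m-1)^3+1)/m=m^2-3m+3$, so that $(m-1)(m-2)=T-1$. Since $S_1$ is a single edge, we have
\[
P(G,L)=\sum_{(c,d)\in L(x)\times L(y),\, c\neq d} N_2(c,d)N_3(c,d).
\]
Applying Lemma~\ref{lem: p4} to $S_2$ (with $x_0=x$ and $x_3=y$), part~(ii) gives $N_2(c,d)\ge T$ for every pair with $c\neq d$ and $(c,d)\neq(\alpha,\beta)$. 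Since $N_2(\alpha,\beta)=T-1$, we get
\[
P(G,L)\ \ge\ T\,P(H,L_3)-N_3(\alpha,\beta)+\sum_{c\neq d}\bigl(N_2(c,d)-T\bigr)N_3(c,d),
\]
where $H$ is the $(l_3+1)$-cycle formed from $S_3$ and the edge $xy$. We always have $P(H,L_3)\ge P(C_{l_3+1},m)$, and $T\,P(C_{l_3+1},m)=P(G,m)$. It therefore suffices to show that the nonnegative surplus terms pay for the single loss $N_3(\alpha,\beta)$. For this loss I will use the trivial upper bound $N_3(\alpha,\beta)\le (m-1)^{l_3-1}$: colour the internal vertices of $S_3$ greedily from one end, with at most $m-1$ choices each, except that the last one has at most $m-1$ choices and there are $l_3-1$ internal vertices. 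Note also that $l_3\ge 3$.

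\emph{Case 1: $L(x)=L(y)$.} By Lemma~\ref{lem: p4}(iii), $N_2(\beta,\alpha)=m^2-m+1=T+2(m-1)$. The surplus is therefore at least $2(m-1)N_3(\beta,\alpha)$. Lemma~\ref{lem: even path simple} with $k=l_3$ odd gives $N_3(\beta,\alpha)\ge ((m-1)^{l_3}+1)/m-1$. The required inequality
\[
2(m-1)\left(\frac{(m-1)^{l_3}+1}{m}-1\right)\ \ge\ (m-1)^{l_3-1}
\]
is then a routine check for $m\ge 3$ and $l_3\ge 3$; the left side is about $\tfrac{2}{m}(m-1)^{l_3+1}$. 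If needed, the additional surpluses $N_2(\beta,c)-T\ge m-1$ from part~(iii) can also be used.

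\emph{Case 2: $L(x)\neq L(y)$.} Now the edge $xy$ of the cycle $H$ has distinct lists, so I apply Lemma~\ref{lem: DPconnection} to $H$ with $d=|L(x)-L(y)|\ge 1$. For the even cycle, $P_{DP}(C_{l_3+1},m)=(m-1)^{l_3+1}-1$, which falls short of $P(C_{l_3+1},m)$ by exactly $m$. The constant $C$ is a lower bound on the number of $L_3$-colourings of $H$ with prescribed distinct colours on $x,y$, that is, on $\min N_3(c,d)$ over $c\neq d$. Lemma~\ref{lem: even path simple} gives $C\ge ((m-1)^{l_3}+1)/m-1$. This yields $T\,P(H,L_3)\ge P(G,m)+T(Cd-m)$, and it remains to show $T(Cd-m)$ plus the $N_2$-surplus is at least $N_3(\alpha,\beta)$.

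I expect this last comparison to be the main obstacle. For large $m$ or $l_3$ it is comfortable, since $TC\approx (m-1)^{l_3+1}$ dominates $(m-1)^{l_3-1}$. However, for $m=3$ and $l_3=3$ the crude version fails (roughly $3(2-3)$ against $4$). There I would first try to recover slack from the $N_2$-surplus. By Lemma~\ref{lem: p4}(i), $L(x_1)=I\cup\{\alpha\}$ and $L(x_2)=I\cup\{\beta\}$. Hence any pair $(c,d)$ with $c\notin I\cup\{\alpha\}$ or $d\notin I\cup\{\beta\}$ gets $N_2(c,d)-T\ge m-1$, and when $L(x)\neq L(y)$ there are many such pairs. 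Failing that, I would sharpen the bound on $N_3(\alpha,\beta)$ and count the finitely many small configurations ($m\in\{3,4\}$, $l_3=3$) directly.
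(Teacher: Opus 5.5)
The step you call trivial is false, and both of your cases rest on it. An internal vertex of $S_3$ has at most $m-1$ admissible colours only if its list contains its predecessor's colour, and nothing forces that. The correct trivial bound is $N_3(\alpha,\beta)\le m^{l_3-1}$, and it is attained under the hypotheses of the lemma. For example, take $m=3$, $l_3=3$, $L(x)=L(y)=\{1,2,3\}$ and $(\alpha,\beta)=(1,2)$. Put $\{1,3,4\}$ and $\{2,3,4\}$ on the neighbours of $x$ and $y$ in $S_2$, so that $N_2(1,2)=2=(m-1)(m-2)$. Put $\{5,6,7\}$ and $\{8,9,10\}$ on the internal vertices of $S_3$. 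Then $N_3(1,2)=9>4$.

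Once $(m-1)^{l_3-1}$ is replaced by $m^{l_3-1}$, your estimates no longer close. The inequality you call routine in Case 1 becomes $8\ge 9$ at $m=3$, $l_3=3$. Even after adding the surpluses from Lemma~\ref{lem: p4}(iii), the available gain $2(m-1)^2\bigl(\frac{(m-1)^{l_3}+1}{m}-1\bigr)$ is $80<81=m^{l_3-1}$ at $m=3$, $l_3=5$.

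The obstruction is structural. Every gain you use is $O_m((m-1)^{l_3})$: this covers the DP slack $T(Cd-m)$ and the surpluses weighted by the lower bounds of Lemma~\ref{lem: even path simple}. The loss, however, can be $m^{l_3-1}$. So bounding the loss, the surplus and $P(H,L_3)$ separately fails for large $l_3$ in both cases, and your claim that Case 2 is comfortable for large $l_3$ is unfounded. Separately, you leave Case 2 open for small parameters such as $m=3$, $l_3=3$. A minor correction: for $(\alpha,d)$ with $d\notin I\cup\{\beta\}$ one gets $N_2(\alpha,d)=(m-1)^2=T+(m-2)$, so the surplus there is $m-2$, not $m-1$.

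What is missing is an estimate that couples the loss $N_3(\alpha,\beta)$ to the other terms. One such estimate uses the fact that $S_3$ has at least two internal vertices. Fix the colour $d$ of $y$, let $u$ be the neighbour of $x$ on $S_3$, and let $R$ count the proper colourings of the remaining internal vertices in which the neighbour of $y$ avoids $d$. For each $a\in L(u)$, at most $R$ of these extend with $u$ coloured $a$, while summed over $a\in L(u)$ there are at least $(m-1)R$ extensions. Since $N_3(c,d)$ is this sum with at most one term removed, $N_3(c,d)\ge\frac{m-2}{m-1}N_3(c',d)$ for all $c,c'$. The same holds in the second coordinate, so $N_3(\beta,\alpha)\ge\bigl(\frac{m-2}{m-1}\bigr)^2N_3(\alpha,\beta)$. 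Because $2(m-1)\bigl(\frac{m-2}{m-1}\bigr)^2\ge 1$ for $m\ge 3$, this repairs your Case 1.

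It does not by itself settle Case 2. For $m=3$ with $L(x)=I\cup\{\alpha\}$ and $L(y)=I\cup\{\beta\}$, the only positive surpluses are $+1$ on the two off-diagonal pairs of $I\times I$. A substantial part of the loss must then come from $T\bigl(P(H,L_3)-P(C_{l_3+1},m)\bigr)$, and you have no estimate of that quantity in terms of $N_3(\alpha,\beta)$.

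The paper avoids the issue by applying AM--GM to $\sum_{c\neq d}N_2(c,d)N_3(c,d)$. In product form only \emph{lower} bounds on the factors are needed: Lemma~\ref{lem: even path simple} for $N_3$ and Lemma~\ref{lem: p4}(ii),(iii) for $N_2$. The deficient pair then costs only a factor $(T-1)/T$. That factor is absorbed by the extra $(m^2-m+1)$-st term when $L(x)\neq L(y)$, and by the enlarged values $N_2(\beta,\alpha)$, $N_2(\beta,c)$, $N_2(c,\alpha)$ when $L(x)=L(y)$.
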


\begin{proof}
First, note that the chromatic polynomial of $G$ is given by
$$P(G,m)=m(m-1)\left(\frac{(m-1)^3+1}{m}\right)\left(\frac{(m-1)^{l_3}+1}{m}\right).$$
By Lemma~\ref{lem: sum},
$$P(G,L)=\sum_{(c,d) \in L(x) \times L(y)}N_1(c,d)N_2(c,d)N_3(c,d).$$
Since $S_1$ is a single edge, $N_1(c,d)=0$ when $c=d$ and $N_1(c,d)=1$ when $c\neq d$. Thus,
$$P(G,L)=\sum_{(c,d) \in L(x) \times L(y),\,c\neq d}N_2(c,d)N_3(c,d).$$

We first show the desired result when $L(x)\neq L(y)$. In this case, the sum has at least $m(m-1)+1$ terms. Let
$$a_m=\frac{(m-1)^3+1}{m}\qquad\text{and}\qquad b_m=\frac{(m-1)^{l_3}+1}{m}.$$
Then, by the AM-GM inequality, Lemma~\ref{lem: even path simple}, and Lemma~\ref{lem: p4},
$$P(G,L)\geq \left(m(m-1)+1\right)\left((a_m-1)a_m^{m^2-m}(b_m-1)^m b_m^{m^2-2m+1}\right)^{\frac{1}{m^2-m+1}}.$$
Furthermore, in the special case that $l_3=3$, we have
$$P(G,L)\geq \left(m(m-1)+1\right)\left((a_m-1)^2a_m^{2m^2-2m}\right)^{\frac{1}{m^2-m+1}}.$$

We first consider the case $l_3=3$. Since $m\geq 3$,
\begin{align*}
1+\frac{2}{(m-1)(m-2)}+\frac{1}{(m-1)^2(m-2)^2}
&\leq 1+1+\frac{1}{2}\\
&\leq 1+\frac{m^2-m+1}{m(m-1)}
+\frac{m^2-m+1}{2(m^2-m)}\\
&\leq \left(1+\frac{1}{m(m-1)}\right)^{m^2-m+1}.
\end{align*}
Thus,
$$\left(\frac{a_m}{a_m-1}\right)^2
=\left(1+\frac{1}{(m-1)(m-2)}\right)^2
\leq \left(1+\frac{1}{m(m-1)}\right)^{m^2-m+1}.$$
After rearranging,
$$(m^2-m+1)\left((a_m-1)^2a_m^{2(m^2-m)}\right)^{\frac{1}{m^2-m+1}}
\geq m(m-1)a_m^2,$$
which implies $P(G,L)\geq P(G,m)$.

Now, suppose $l_3\geq 5$. Since $m\geq 3$, one checks that
$$\ln(2.5)\geq \frac{1}{(m-1)(m-2)}
+\frac{m^2}{(m-1)^{l_3}+1-m}
=\frac{1}{a_m-1}+\frac{m}{b_m-1}.$$
Consequently,
$$2.5\geq \exp\left(\frac{1}{a_m-1}+\frac{m}{b_m-1}\right).$$
Using $1+z\leq e^z$, we obtain
$$2.5\geq
\left(1+\frac{1}{a_m-1}\right)
\left(1+\frac{1}{b_m-1}\right)^m
=\left(\frac{a_m}{a_m-1}\right)
\left(\frac{b_m}{b_m-1}\right)^m.$$
Also,
$$\left(1+\frac{1}{m(m-1)}\right)^{m^2-m+1}
\geq 1+\frac{m^2-m+1}{m(m-1)}
+\frac{m^2-m+1}{2(m^2-m)}
\geq 2.5.$$
Therefore,
$$\left(1+\frac{1}{m(m-1)}\right)^{m^2-m+1}
\geq
\left(\frac{a_m}{a_m-1}\right)
\left(\frac{b_m}{b_m-1}\right)^m.$$
After rearranging,
$$\left(m(m-1)+1\right)
\left((a_m-1)a_m^{m^2-m}(b_m-1)^m b_m^{m^2-2m+1}\right)^{\frac{1}{m^2-m+1}}
\geq m(m-1)a_mb_m=P(G,m).$$
Thus, $P(G,L)\geq P(G,m)$ when $L(x)\neq L(y)$.

Now, suppose $L(x)=L(y)$. In this case,
$$P(G,L)=\sum_{(c,d) \in L(x) \times L(y),\,c\neq d}N_2(c,d)N_3(c,d),$$
and this sum has exactly $m(m-1)$ terms. Notice that
$$a_m=\frac{(m-1)^3+1}{m}=m^2-3m+3.$$
Then, by the AM-GM inequality and Lemma~\ref{lem: p4},
\begin{align*}
P(G,L)
&\geq m(m-1)\Big((a_m-1)(a_m+2m-2)(a_m+m-1)^{2m-4}\\
&\hspace{3cm}\cdot a_m^{m^2-3m+2}(b_m-1)^m b_m^{m^2-2m}\Big)^{\frac{1}{m(m-1)}}.
\end{align*}

Since $l_3\geq 3$, we have $b_m\geq a_m$. Consequently,
$$\left(1+\frac{1}{b_m-1}\right)^m
\leq \left(1+\frac{1}{a_m-1}\right)^m.$$
Also, for $m\geq 3$,
$$\left(1-\frac{1}{a_m}\right)\left(1+\frac{2m-2}{a_m}\right)
=1+\frac{2m-3}{a_m}-\frac{2m-2}{a_m^2}\geq 1,$$
and
$$\frac{m-1}{a_m}\geq \frac{1}{a_m-1}.$$

When $m\geq 4$, we have $2m-4\geq m$, and hence
$$\left(1+\frac{m-1}{a_m}\right)^{2m-4}
\geq \left(1+\frac{1}{a_m-1}\right)^{2m-4}
\geq \left(1+\frac{1}{a_m-1}\right)^m
\geq \left(1+\frac{1}{b_m-1}\right)^m.$$
Therefore, when $m\geq 4$,
$$\left(1-\frac{1}{a_m}\right)
\left(1+\frac{2m-2}{a_m}\right)
\left(1+\frac{m-1}{a_m}\right)^{2m-4}
\geq \left(1+\frac{1}{b_m-1}\right)^m.$$

We claim the same inequality holds when $m=3$. In this case, $a_3=3$ and $b_3\geq 3$, so
\begin{align*}
\left(1-\frac{1}{a_m}\right)
\left(1+\frac{2m-2}{a_m}\right)
\left(1+\frac{m-1}{a_m}\right)^{2m-4}
&=\left(\frac{2}{3}\right)\left(\frac{7}{3}\right)\left(\frac{5}{3}\right)^2\\
&=\frac{350}{81}\\
&\geq \frac{27}{8}\\
&\geq \left(1+\frac{1}{b_m-1}\right)^3.
\end{align*}
Thus, for all $m\geq 3$,
$$\left(1-\frac{1}{a_m}\right)
\left(1+\frac{2m-2}{a_m}\right)
\left(1+\frac{m-1}{a_m}\right)^{2m-4}
\geq \left(1+\frac{1}{b_m-1}\right)^m.$$
After rearranging,
$$(a_m-1)(a_m+2m-2)(a_m+m-1)^{2m-4}(b_m-1)^m
\geq a_m^{2m-2}b_m^m.$$
It follows that
$$(a_m-1)(a_m+2m-2)(a_m+m-1)^{2m-4}
a_m^{m^2-3m+2}(b_m-1)^m b_m^{m^2-2m}
\geq a_m^{m^2-m}b_m^{m^2-m}.$$
Therefore,
$$P(G,L)\geq m(m-1)a_mb_m=P(G,m).$$
This completes the proof.
\end{proof}

By Lemma~\ref{lem: even path simple}, either the hypothesis of Lemma~\ref{lem: lucasftw} holds or there exists $(\alpha,\beta)\in L(x)\times L(y)$ with $\alpha\neq\beta$ such that $N_2(\alpha,\beta)=(m-1)(m-2)$. Thus, Lemmas~\ref{lem: lucasftw} and~\ref{lem: finish} together establish Statement~(ii). It remains to prove Statements~(iii), (iv), and (v), which we handle in the next three lemmas. We begin with Statement~(iii).

\begin{lemma} \label{lem: 155amgm}
Suppose $G= \Theta(1,5,5)$. Then, $P_{\ell}(G,m)=P(G,m)$ whenever $m \in \{3,4\}$.
\end{lemma}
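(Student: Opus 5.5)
The plan is to work directly from Lemma~\ref{lem: sum}, in the spirit of Lemmas~\ref{lem: lucasftw} and~\ref{lem: finish}, but with a perturbative argument in place of a pure AM-GM bound. Let $G=\Theta(1,5,5)$, let $m\in\{3,4\}$, and let $L$ be an $m$-assignment for $G$. Since $S_1$ is a single edge, Lemma~\ref{lem: sum} gives
\[
P(G,L)=\sum_{(c,d)\in L(x)\times L(y),\,c\neq d} N_2(c,d)N_3(c,d).
\]
Set $b=((m-1)^5+1)/m$, so $b=11$ when $m=3$ and $b=61$ when $m=4$. From the formula for the chromatic polynomial of a theta graph, $P(G,m)=m(m-1)b^2$.

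Applying Lemma~\ref{lem: even path simple} with $k=5$ to each of $S_2$ and $S_3$ gives $N_i(c,d)\geq b-1$ always, and $N_i(c,d)\geq b$ except on a set $A_i$ of at most $m$ pairs. Write $N_2=b+u$ and $N_3=b+v$, so that $u,v\geq -1$. Let $T$ be the set of pairs $(c,d)$ with $c\neq d$, so $|T|\geq m(m-1)$. Expanding the product,
\[
P(G,L)=|T|b^2+b\Big(\sum_T u+\sum_T v\Big)+\sum_T uv .
\]
Next I use the cycle trick from Lemma~\ref{lem: lucasftw}. For $i\in\{2,3\}$, add the edge $xy$ to $S_i$ to obtain a $6$-cycle $H_i$. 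By Theorem~\ref{thm:examples}, $\sum_T N_i = P(H_i,L_i)\geq P(C_6,m)=(m-1)^6+(m-1)=m(m-1)b$. Hence $\sum_T u\geq (m(m-1)-|T|)b$, and similarly for $v$.

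If $|T|\geq m(m-1)+1$, then the first two terms together are at least $m(m-1)b^2$, and the cross term needs to be controlled. In this case the AM-GM computation from Lemma~\ref{lem: finish} already suffices. The bound is
\[
P(G,L)\geq |T|\big((b-1)^{2m}b^{2|T|-2m}\big)^{1/|T|},
\]
and with $|T|=m^2-m+1$ this reduces to checking $(1+\tfrac{1}{m(m-1)})^{m^2-m+1}\geq (\tfrac{b}{b-1})^{2m}$. For $m=3$ this reads $(7/6)^7\approx 2.94\geq (11/10)^6\approx 1.77$, and for $m=4$ it is even easier. Larger $|T|$ only helps.

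So the real case is $L(x)=L(y)$, where $|T|=m(m-1)$, $\Sigma u:=\sum_T u\geq 0$ and $\Sigma v:=\sum_T v\geq 0$. A product $uv$ is negative only when one factor is $-1$ and the other is positive. Consider the terms with $u=-1$ and $v>0$: together they contribute at least $-\sum_{v>0}v$ to $\sum_T uv$. Since at most $m$ entries have $v=-1$, we have $\sum_{v>0}v=\Sigma v+\#\{v=-1\}\leq \Sigma v+m$. The symmetric bound holds for the terms with $v=-1$ and $u>0$. Therefore
\[
P(G,L)-P(G,m)\geq (b-1)(\Sigma u+\Sigma v)-2m .
\]
Since the $u$, $v$ are integers, this is positive as soon as $\Sigma u+\Sigma v\geq 1$, because $2m<b-1$ for $m\in\{3,4\}$.

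The main obstacle is the tight case $\Sigma u=\Sigma v=0$, meaning $P(H_2,L_2)=P(H_3,L_3)=P(C_6,m)$ exactly. Here I would sharpen the cross-term bound. The negative contribution is $-\sum v$ over pairs with $u=-1$ and $v>0$, and this can only reach $-m$ if all the positive mass of $v$ sits on the at most $m$ pairs in $A_2$. To rule this out, I would first derive, in the manner of Lemma~\ref{lem: p4}, the structure of a path $P_6$ whose $N$-values are exactly $b-1$ on $A$, with $L(x_0)=L(x_5)$ and the cycle count tight. From that structure I would show that $A_i$ is a set of ``parallel'' pairs, analogous to $(\alpha,\beta)$ and its reverse, on which the values of the other $N_j$ are controlled, giving $\sum_T uv\geq 0$. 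If this structural route becomes messy, the fallback is that $m\in\{3,4\}$ and $|T|\leq 12$ make the tight configurations a finite check: enumerate the possible $(u,v)$ vectors with zero sums, $u,v\geq -1$ and at most $m$ entries equal to $-1$, and verify $\sum_T uv\geq 0$ directly using the constraints from Lemma~\ref{lem: even path simple}.
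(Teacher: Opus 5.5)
Everything up to the tight case is sound: the expansion of $\sum_T(b+u)(b+v)$, the cycle bound $\sum_T N_i=P(H_i,L_i)\ge P(C_6,m)=m(m-1)b$, the AM-GM treatment of $L(x)\ne L(y)$ (which is exactly the paper's), and the estimate $P(G,L)-P(G,m)\ge (b-1)(\Sigma u+\Sigma v)-2m$ when $L(x)=L(y)$. The gap is the case $\Sigma u=\Sigma v=0$. You leave it open, and neither proposed route closes it.

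Your fallback cannot work as stated. The constraints you would check are $u,v\ge -1$, zero sums, and at most $m$ entries equal to $-1$. For $m=3$ these are met by $u=(-1,-1,-1,3,0,0)$ and $v=(3,0,0,-1,-1,-1)$, which give $\sum_T uv=-6=-2m$, i.e.\ $P(G,L)-P(G,m)=-6$. So the bounds of Lemma~\ref{lem: even path simple} alone are consistent with failure, and some genuinely list-theoretic input is needed. The structural route via ``parallel pairs'' is only a hope, with no argument behind it.

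The missing fact is that the cycle bound is \emph{strict} for nonconstant lists. Suppose $L_i$ is not constant on the $6$-cycle $H_i$, and pick an edge $qs$ of $H_i$ with $L(q)\ne L(s)$. Any coloring of $q,s$ with distinct colors extends greedily around the cycle in at least $(m-1)^3(m-2)$ ways. So Lemma~\ref{lem: DPconnection}, together with $P_{DP}(C_6,m)=(m-1)^6-1$, gives $P(H_i,L_i)\ge 71>66$ when $m=3$ and $P(H_i,L_i)\ge 782>732$ when $m=4$. Thus $\Sigma u=0$ forces $L_2$ to be constant, whence $N_2\equiv b$ on $T$, $u\equiv 0$, and $\sum_T uv=0$; likewise for $v$.

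This is precisely the ingredient the paper uses when $L(x)=L(y)$. There it feeds the strict sum bound into Karamata's inequality to get $\prod_T N_i\ge b^{m(m-1)}$ and concludes by AM-GM. Once you insert this fact, your additive argument is a complete and arguably simpler alternative to the Karamata step. If $L_2$ or $L_3$ is nonconstant, then $\Sigma u+\Sigma v\ge 5$ and your estimate is positive. If both are constant, then $L$ is constant (as $V(G)=V(S_2)\cup V(S_3)$), and $P(G,L)=P(G,m)$.
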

\begin{proof}
It is easy to calculate that $P(G,3)=726$ and $P(G,4)=44652$.  Suppose $L$ is an arbitrary $m$-assignment of $G$ where $m \in \{3,4\}$. We must show $P(G,L) \geq 726$ if $m=3$ and $P(G,L) \geq 44652$ if $m=4$.  By Lemma~\ref{lem: sum} and the fact that $N_1(c,d) = 0$ when $c=d$ and $N_1(c,d)=1$ when $c \neq d$, we know that $$P(G,L)= {\sum_{(c,d) \in L(x) \times L(y), c \neq d} N_2(c,d)N_3(c,d)}.$$ 
We first show the desired result in the case $L(x) \neq L(y)$. Since $L(x) \neq L(y)$, we have $|L(x)\cap L(y)|\leq m-1$. Thus, there are at least $m^2-m+1$ pairs $(c,d)\in L(x)\times L(y)$ such that $c\neq d$. Since each term in the sum is positive, we may restrict the sum to any $m^2-m+1$ such pairs. Let $a_m=((m-1)^5+1)/m$, and note that $a_3=11$ and $a_4=61$. By Lemma~\ref{lem: even path simple}, among the resulting $2(m^2-m+1)$ factors $N_2(c,d)$ and $N_3(c,d)$, at most $2m$ are only guaranteed to be at least $a_m-1$, while all remaining factors are at least $a_m$. Thus, by the AM-GM inequality,
$$P(G,L)\geq (m^2-m+1)\left(a_m^{2m^2-4m+2}(a_m-1)^{2m}\right)^{1/(m^2-m+1)}.$$ 
When $m=3$,
$$ P(G,L)\geq 7(11^8\cdot 10^6)^{1/7}>780>726=P(G,3).$$
When $m=4$,
$$ P(G,L)\geq 13(61^{18}\cdot 60^8)^{1/13}>47883>44652=P(G,4).$$
So, the desired result holds when $L(x)\neq L(y)$.

Now suppose that $L(x) = L(y)$.  First, note that the AM-GM inequality gives
\begin{align*}
P(G,L) &\geq(m^2-m) \left( \prod_{(c,d) \in L(x) \times L(y), c \neq d} N_2(c,d)N_3(c,d) \right)^{1/(m^2-m)} \\
&= (m^2-m) \left( \prod_{(c,d) \in L(x) \times L(y), c \neq d} N_2(c,d) \right)^{1/(m^2-m)}\left( \prod_{(c,d) \in L(x) \times L(y), c \neq d} N_3(c,d) \right)^{1/(m^2-m)}.    
\end{align*}
We claim that
$$\prod_{(c,d)\in L(x)\times L(y),\,c\neq d}N_i(c,d)\geq a_m^{m^2-m}$$
for each $i\in\{2,3\}$. This will imply
$$P(G,L)\geq (m^2-m)a_m^2=P(G,m).$$

We will prove our claimed lower bound on $\prod_{(c,d)\in L(x)\times L(y),\,c\neq d}N_2(c,d)$ since the same argument works when $i=3$. If $L_2$ is constant, then
$$\prod_{(c,d)\in L(x)\times L(y),\,c\neq d}N_2(c,d)=a_m^{m^2-m}.$$
So, suppose $L_2$ is not constant. Let $H$ be the graph obtained from $S_2$ by adding the edge $xy$. Then $H=C_6$ and
$$P(H,L_2)=\sum_{(c,d)\in L(x)\times L(y),\,c\neq d}N_2(c,d).$$
Since $H$ is connected and $L_2$ is nonconstant, there are adjacent vertices $q,s\in V(H)$ such that $L(q)\neq L(s)$. Thus, $|L(q)-L(s)|\geq 1$. For any $a\in L(q)$ and $b\in L(s)$ with $a\neq b$, after coloring $q$ with $a$ and $s$ with $b$, there are at least $(m-1)^3(m-2)$ ways to greedily complete the coloring around $H$. Hence, by Lemma~\ref{lem: DPconnection} and the formula $P_{DP}(C_6,m)=(m-1)^6-1$ (see e.g.~\cite{KM19}),
$$P(H,L_2)\geq P_{DP}(H,3)+8=63+8=71$$
when $m=3$, and
$$P(H,L_2)\geq P_{DP}(H,4)+54=728+54=782$$
when $m=4$.

By Lemma~\ref{lem: even path simple}, all $m(m-1)$ terms in the sum giving $P(H,L_2)$ are at least $a_m-1$, at most $m$ of the terms can equal $a_m-1$, and all remaining terms are at least $a_m$. Since increasing any term increases the product, to minimize the product it suffices to assume that the sum attains its lower bound. Subject to this fixed sum and the preceding lower bounds, the product is minimized by taking $m$ terms equal to $a_m-1$, all but one of the remaining terms equal to $a_m$, and concentrating the remaining excess in the last term. Indeed, this sequence majorizes every other admissible sequence with the same sum. Thus, Karamata's inequality (see~\cite[Page 11]{MOA11}), applied to the convex function $\phi(x)=-\log(x)$, shows that this sequence minimizes the product.
Thus, when $m=3$,
$$\prod_{(c,d)\in L(x)\times L(y),\,c\neq d}N_2(c,d)\geq 10^3\cdot 11^2\cdot(71-3(10)-2(11))\geq 11^6.$$
Similarly, when $m=4$,
$$\prod_{(c,d)\in L(x)\times L(y),\,c\neq d}N_2(c,d)\geq 60^4\cdot 61^7\cdot(782-4(60)-7(61))\geq 61^{12}.$$
So, $P(G,L) \geq P(G,m)$.
\end{proof}

We now prove Statement~(iv).

\begin{lemma} \label{lem: 244amgm}
Suppose $G= \Theta(2,4,4)$. Then, $P_{\ell}(G,m)=P(G,m)$ whenever $m \in \{3,4\}$.
\end{lemma}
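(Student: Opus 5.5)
For $m=4$ I would use the DP route of Lemma~\ref{lem: DPchunk}, which works for $\Theta(2,4,4)$ at $m=4$ even though Lemma~\ref{lem: DPchunk} states its conclusion only for $m\ge 5$. For $m=3$ that route falls just short, so I would finish with a direct decomposition through Lemma~\ref{lem: sum}.

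\textbf{The values of the chromatic polynomial.} With constant lists, Lemma~\ref{lem: sum} gives each value directly. The middle path has $N=m-1$ on equal ends and $m-2$ on distinct ends. A path of length $4$ has $N=\frac{(m-1)^4-1}{m}+1$ on equal ends and $\frac{(m-1)^4-1}{m}$ on distinct ends. This gives
\[P(G,3)=3\cdot 2\cdot 6^2+6\cdot 1\cdot 5^2=366\]
and
\[P(G,4)=4\cdot 3\cdot 21^2+12\cdot 2\cdot 20^2=14892.\]

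\textbf{The case $m=4$.} Let $L$ be a $4$-assignment. If $L$ is constant, $P(G,L)=P(G,4)$. Otherwise some edge $qs$ has $L(q)\ne L(s)$. Theorem~\ref{thm: DPformulas} gives
\[P_{DP}(G,4)=\tfrac14\left(3^{10}-3^2-2\cdot 3^4+2\right)=14720.\]
Lemma~\ref{thm: countMath} gives $C\ge 3^5\cdot 2^2=972$. Lemma~\ref{lem: DPconnection} then yields $P(G,L)\ge 14720+972>14892$, which settles $m=4$.

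\textbf{The case $m=3$.} The same bounds give $P_{DP}(G,3)=330$ and $C\ge 32$, so $P(G,L)\ge 362$, which is $4$ short of $366$. If some edge has $|L(q)-L(s)|\ge 2$, we get $330+64>366$ and are done. So we may assume that adjacent lists differ in at most one colour.

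In this remaining case I would write $S_1=xzy$ and use $N_1(c,d)=|L(z)-\{c,d\}|$. This gives $N_1(c,c)\ge 2$ and $N_1(c,d)\ge 1$ for $c\ne d$. Hence
\[P(G,L)\ge \sum_{(c,d)} N_2N_3+\sum_{c\in L(x)\cap L(y)}N_2(c,c)N_3(c,c)+E,\]
where $E\ge 0$ collects the extra terms from pairs with $c$ or $d$ outside $L(z)$. The first sum equals $P(C_8,L_2\cup L_3)$, where $C_8$ is the cycle $S_2\cup S_3$. It is at least $P(C_8,3)=258$ by Theorem~\ref{thm:examples}~(ii), and it is at least $P_{DP}(C_8,3)+8d'$ if the restricted assignment is nonconstant. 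So it remains to obtain roughly $108$ from the diagonal terms and $E$.

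I would split according to $|L(x)\cap L(y)|$ and to $L(z)$.
\begin{itemize}
\item If $L(x)=L(y)=L(z)$, every term matches the constant case except possibly the $N_2,N_3$ values. Lemma~\ref{lem: even path simple} gives $N_i(c,c)\ge 5$, with the partition $\{A,B\}$ controlling how often the value $6$ is lost. I would compare the total against the $C_8$ count via AM-GM and Karamata, as in Lemma~\ref{lem: 155amgm}.
\item If the lists at $x,y,z$ differ, I would argue that $E$ grows exactly when diagonal pairs disappear. Each colour of $L(x)\cup L(y)$ missing from $L(z)$ raises $N_1$ by one on $m$ pairs.
\end{itemize}

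\textbf{Main obstacle.} The hard step is the $m=3$ subcase where the lists at $x,y,z$ coincide but $L_2$ or $L_3$ is nonconstant. There the diagonal values $N_i(c,c)$ can drop from $6$ to $5$. I would first try to show that the drop forces a compensating gain in the off-diagonal values, using the fixed sum from the $C_8$ bound together with the structure in Lemma~\ref{lem: even path simple}. If that fails, I would refine Lemma~\ref{thm: countMath} for a carefully chosen differing edge in $S_2$ or $S_3$, aiming for $C\ge 36$.
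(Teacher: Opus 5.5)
\paragraph{Where the proposal stands.} Your $m=4$ argument is correct, and it is simpler than the paper's. Keeping the factor $(m-2)^2$ from Lemma~\ref{thm: countMath} gives $P(G,L)\ge 14720+972>14892$ for every nonconstant $4$-assignment. The paper instead redoes $m=4$ with the same two-case split it uses for $m=3$. The restriction $m\ge 5$ in Lemma~\ref{lem: DPchunk} comes only from discarding $(m-2)^2$.

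At $m=3$, two of your steps are also correct: the reduction to adjacent lists differing in at most one colour, and the identity $P(G,L)=P(C_8,L')+\sum_{(c,d)}(N_1(c,d)-1)N_2(c,d)N_3(c,d)$. When $L(x),L(z),L(y)$ are not all equal, this subcase can be finished directly rather than by the heuristic you give. We have $N_1(c,d)=1$ only when $c\neq d$ and $c,d\in L(z)$, and there are at most four such pairs. So at least five pairs contribute at least $5\cdot 5=25$ each, and $P(G,L)\ge 258+125>366$.

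\paragraph{The gap.} The gap is the subcase you flag yourself: $L(x)=L(z)=L(y)$ with $L'$ nonconstant on $C_8=S_2\cup S_3$. There $P(G,L)=P(C_8,L')+\sum_{c}N_2(c,c)N_3(c,c)$ exactly. The diagonal sum is only guaranteed to be at least $3\cdot 25=75$, so you need $P(C_8,L')\ge 291$. None of the bounds you cite reach this:
\begin{itemize}
\item $258$ from enumerative chromatic choosability of cycles;
\item $255+8$ from your constant $8d'$;
\item even the greedy constant $2^5\cdot 1=32$, which gives only $287$.
\end{itemize}
Neither fallback plan is carried out, so the $m=3$ case is not proved.

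\paragraph{The missing idea.} Apply Lemma~\ref{lem: DPconnection} to the $8$-cycle itself, taking $C$ from Lemma~\ref{lem: even path simple}. For an edge $uv$ of $C_8$ with $L(u)\neq L(v)$, colorings of $C_8$ with $u$ and $v$ prescribed are exactly colorings of the $7$-edge path $C_8-uv$ with prescribed ends. Hence $C\ge\min\{43,42\}=42$. This gives $P(C_8,L')\ge P_{DP}(C_8,3)+42=297$, and therefore $P(G,L)\ge 297+75=372>366$. This is how the paper closes that case.
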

\begin{proof}
It is easy to calculate that $P(G,3)=366$ and $P(G,4)=14892$. Suppose $L$ is an arbitrary $m$-assignment for $G$, where $m\in\{3,4\}$. We must show that $P(G,L)\geq 366$ when $m=3$ and $P(G,L)\geq 14892$ when $m=4$. Recall that the internal vertex of $S_1$ is $x_1$. We consider separately the cases in which $L(x)=L(x_1)=L(y)$ and in which these three lists are not all equal.

First, suppose $L(x)$, $L(x_1)$, and $L(y)$ are not all equal. Let $\beta=|L(x)\cap L(x_1)\cap L(y)|$, $\gamma=|L(x_1)\cap(L(x)-L(y))|$, and $\mu=|L(x_1)\cap(L(y)-L(x))|$. Notice that $\beta\in\{0,1,\ldots,m-1\}$ and $\beta+\gamma+\mu\leq |L(x_1)|=m$. We have $N_1(c,d)\geq m-2$ for each $(c,d)\in L(x)\times L(y)$. Furthermore, in order for $N_1(c,d)=m-2$, we must have $c\neq d$ and $c,d\in L(x_1)$. Consequently, the number of pairs $(c,d)\in L(x)\times L(y)$ satisfying $N_1(c,d)=m-2$ is
$$(\gamma+\beta)(\mu+\beta)-\beta.$$
Since $\beta+\gamma+\mu\leq m$, it is straightforward to check that the maximum value of this expression is $4$ when $m=3$ and $9$ when $m=4$. By Lemma~\ref{lem: sum},
$$P(G,L)=\sum_{(c,d)\in L(x)\times L(y)}N_1(c,d)N_2(c,d)N_3(c,d).$$
Suppose $m=3$. Then, among the nine terms in the sum above, the factor $N_1(c,d)$ is equal to $1$ at most four times and is at least $2$ at least five times. Thus, by Lemma~\ref{lem: even path simple} and the AM-GM inequality,
$$P(G,L)\geq 9\left(2^5\cdot 6^3\cdot 5^6\cdot 6^3\cdot 5^6\right)^{1/9}>366=P(G,3).$$

Now, suppose $m=4$. Then, among the sixteen terms in the sum above, the factor $N_1(c,d)$ is equal to $2$ at most nine times and is at least $3$ at least seven times. Thus, by Lemma~\ref{lem: even path simple} and the AM-GM inequality,
$$P(G,L)\geq 16\left(2^9\cdot 3^7\cdot 21^4\cdot 20^{12}\cdot 21^4\cdot 20^{12}\right)^{1/16}>14892=P(G,4).$$

Now, suppose $L(x)=L(x_1)=L(y)$. Note that if $L$ is a constant $m$-assignment, then the desired result follows immediately. So, we may assume $L$ is nonconstant. Let $H=S_2\cup S_3$, and let $L'$ be the restriction of $L$ to $V(H)$. Since $L(x)=L(x_1)=L(y)$ and $L$ is nonconstant, $L'$ is a nonconstant $m$-assignment for the $8$-cycle $H$. Thus, there are adjacent vertices $u,v\in V(H)$ such that $L'(u)\neq L'(v)$. Suppose $c_1\in L'(u)$ and $c_2\in L'(v)$ with $c_1\neq c_2$. The number of proper $L'$-colorings of $H$ that color $u$ with $c_1$ and $v$ with $c_2$ is the same as the number of proper $L'$-colorings of $H-uv$ that color $u$ with $c_1$ and $v$ with $c_2$. Since $H-uv$ is a path with $7$ edges, Lemma~\ref{lem: even path simple} implies that this number is at least $42$ when $m=3$ and at least $546$ when $m=4$.

Thus, by Lemma~\ref{lem: DPconnection} and the formula $P_{DP}(C_8,m)=(m-1)^8-1$ (see~\cite{KM19}), when $m=3$ we have
$$P(H,L')\geq P_{DP}(C_8,3)+42=255+42=297,$$
and when $m=4$ we have
$$P(H,L')\geq P_{DP}(C_8,4)+546=6560+546=7106.$$
By Lemma ~\ref{lem: sum} we know that
\[P(G,L) = \sum_{(c,d) \in L(x) \times L(y)} N_1(c,d)N_2(c,d)N_3(c,d).\]

Suppose $m=3$. Name the elements of $L(x)\times L(y)$ so that $L(x)\times L(y)=\{\boldsymbol{d}_1,\ldots,\boldsymbol{d}_9\}$. Since $L(x)=L(x_1)=L(y)$, there are exactly three pairs $(c,d)\in L(x)\times L(y)$ such that $N_1(c,d)=2$, while $N_1(c,d)=1$ for all remaining pairs. Thus, we may assume $N_1(\boldsymbol{d}_i)=2$ for each $i\in[3]$. For each $i\in[9]$, let $d_i=N_2(\boldsymbol{d}_i)N_3(\boldsymbol{d}_i)$. Then,
$$P(H,L')=\sum_{i=1}^9d_i\geq297.$$
By Lemma~\ref{lem: even path simple}, $N_2(\boldsymbol{d}_i)\geq5$ and $N_3(\boldsymbol{d}_i)\geq5$ for each $i\in[9]$, so $d_i\geq25$. Therefore,
$$P(G,L)\geq2\sum_{i=1}^3d_i+\sum_{i=4}^9d_i=(d_1+d_2+d_3)+\sum_{i=1}^9d_i\geq3(25)+297>366=P(G,3).$$

Now, suppose $m=4$. Name the elements of $L(x)\times L(y)$ so that $L(x)\times L(y)=\{\boldsymbol{d}_1,\ldots,\boldsymbol{d}_{16}\}$. Since $L(x)=L(x_1)=L(y)$, there are exactly four pairs $(c,d)\in L(x)\times L(y)$ such that $N_1(c,d)=3$, while $N_1(c,d)=2$ for all remaining pairs. Thus, we may assume $N_1(\boldsymbol{d}_i)=3$ for each $i\in[4]$. For each $i\in[16]$, let $d_i=N_2(\boldsymbol{d}_i)N_3(\boldsymbol{d}_i)$. Then,
$$P(H,L')=\sum_{i=1}^{16}d_i\geq7106.$$
By Lemma~\ref{lem: even path simple}, $d_i\geq20^2=400$ for each $i\in[16]$. Therefore,
$$P(G,L)\geq3\sum_{i=1}^4d_i+2\sum_{i=5}^{16}d_i=(d_1+d_2+d_3+d_4)+2\sum_{i=1}^{16}d_i\geq4(400)+2(7106)>14892=P(G,4).$$
\end{proof}

Finally, we prove Statement~(v).

\begin{lemma} \label{lem: 3s}
Suppose $G= \Theta(3,3,3)$. Then, $P_{\ell}(G,m)=P(G,m)$ whenever $m \in \{3,4\}$.
\end{lemma}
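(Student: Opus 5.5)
We will argue as in Lemmas~\ref{lem: 155amgm} and~\ref{lem: 244amgm}, bounding $P(G,L)$ termwise from the sum in Lemma~\ref{lem: sum}. Throughout, set $a_m=((m-1)^3+1)/m=m^2-3m+3$, so $a_3=3$ and $a_4=7$. For a path with three edges and endpoints colored $c=d$, the constant assignment gives exactly $((m-1)^3-(m-1))/m=(m-1)(m-2)=a_m-1$ colorings. Hence
\[
P(G,m)=m(a_m-1)^3+m(m-1)a_m^3,
\]
which gives $P(G,3)=186$ and $P(G,4)=4980$. Fix an arbitrary $m$-assignment $L$ with $m\in\{3,4\}$. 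By Lemma~\ref{lem: sum},
\[
P(G,L)=\sum_{(c,d)\in L(x)\times L(y)} N_1(c,d)N_2(c,d)N_3(c,d),
\]
and we must show this sum is at least $P(G,m)$.

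\textbf{Per-factor information.} Lemma~\ref{lem: even path simple} gives $N_i(c,d)\ge a_m-1$ for every pair. It also gives a set $A_i$ of $m$ pairs outside which $N_i\ge a_m$. The crude bound obtained this way is not enough. If the three sets $A_i$ coincide, we recover exactly $P(G,m)$. If they are disjoint, however, the total deficit becomes $3m a_m^2$, which exceeds $m\bigl(a_m^3-(a_m-1)^3\bigr)$. So we need more precise information about which pairs can be deficient. This comes from Lemma~\ref{lem: p4} applied to each $S_i$:
\begin{enumerate}[(a)]
\item a pair $(c,d)$ with $c\neq d$ and $N_i(c,d)=a_m-1$ forces the rigid structure of Lemma~\ref{lem: p4}(i), and by Lemma~\ref{lem: p4}(ii) there is at most one such pair for each $i$;
\item for pairs with $c=d$, the identity $N_i(c,c)=|L(x_1)-\{c\}||L(x_2)-\{c\}|-|(L(x_1)\cap L(x_2))-\{c\}|$ shows that $N_i(c,c)=a_m-1$ forces $c\in L(x_1)=L(x_2)$, where $x_1,x_2$ are the internal vertices of $S_i$.
\end{enumerate}
The plan is to use these facts to show that deficient pairs in different paths cannot be spread out without creating surplus elsewhere.

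\textbf{Case split.} We split on whether $L(x)=L(y)$.
\begin{enumerate}[(1)]
\item If $L(x)\neq L(y)$, there are at most $m-1$ diagonal pairs and at least $m^2-m+1$ off-diagonal pairs.
\begin{itemize}
\item Each $N_i$ is deficient on at most one off-diagonal pair and on at most $m-1$ diagonal pairs.
\item We lower-bound each product term by the worst allowed combination of deficits.
\item We then apply AM-GM exactly as in Lemma~\ref{lem: 155amgm}. The expectation is that the extra off-diagonal term, worth about $a_m^3$, absorbs the loss.
\end{itemize}
If the plain AM-GM bound falls short, we instead bound the sum directly: keep the diagonal terms at their minimum and use the off-diagonal terms. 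Here Lemma~\ref{lem: p4}(iii) is useful whenever an off-diagonal deficit occurs, since it gives large values for the neighbouring pairs.
\item If $L(x)=L(y)$, we sub-case on whether $L$ is constant on each $S_i$.
\begin{itemize}
\item When $L$ is constant on every $S_i$, the whole assignment is constant and $P(G,L)=P(G,m)$.
\item Otherwise, for a nonconstant $S_i$, consider the $6$-cycle $H_{ij}=S_i\cup S_j$, or the cycle $S_i+xy$.
\item As in Lemma~\ref{lem: 155amgm}, Lemma~\ref{lem: DPconnection} together with $P_{DP}(C_n,m)=(m-1)^n-1$ (for even $n$) gives a surplus in $\sum_{(c,d)} N_i(c,d)$ over its constant value.
\item Karamata's inequality with $-\log$, applied as in Lemma~\ref{lem: 155amgm}, then converts this surplus into a lower bound on the relevant products.
\end{itemize}
\end{enumerate}

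\textbf{Expected obstacle.} The hardest part is the case $L(x)=L(y)$ where several of the $S_i$ are nonconstant but their deficient pairs are spread across different diagonal entries. In this situation neither AM-GM nor the global DP bound suffices. The global DP bound from Theorem~\ref{thm: DPformulas} plus Lemma~\ref{thm: countMath} gives only $162+16=178<186$ when $m=3$.

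What we would try first is to observe that a diagonal deficit $N_i(c,c)=a_m-1$ forces $L(x_1)=L(x_2)\ni c$ on $S_i$. Given $L(x)=L(y)$, each nonconstant $S_i$ can then have only a limited number of diagonal deficits, and each missing deficit should be compensated by a gain of at least $a_m^2$ in the corresponding product. Concretely, we would enumerate the possible intersection patterns $|L(x_1)\cap L(x_2)\cap L(x)|$ for each $S_i$. Since $m\le 4$, this is a short finite check, and we would bound the resulting sums explicitly for $m=3$ and $m=4$.
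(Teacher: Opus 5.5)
\textbf{There is a genuine gap.} The case you identify as the crux is left as a plan, and the case you do spell out fails under the constraints you state.

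\textbf{Case (1).} You allow each $N_i$ to equal $a_m-1$ on one off-diagonal pair \emph{and} on up to $m-1$ diagonal pairs. That is too weak. Take $m=3$ and $|L(x)\cap L(y)|=2$, and let every $N_i$ equal $2$ on both diagonal pairs and on its own off-diagonal pair. The termwise bound is then $2\cdot 8+3\cdot 18+4\cdot 27=178<186$, and AM-GM only lowers it. Your fallback through Lemma~\ref{lem: p4}(iii) is unavailable, because (iii) assumes $L(x_0)=L(x_3)$, i.e.\ $L(x)=L(y)$.

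What rescues case (1) is a consequence of your own facts (a) and (b). An off-diagonal deficit on $S_i$ forces $L(x_1)=I\cup\{\alpha\}\neq I\cup\{\beta\}=L(x_2)$, whereas a diagonal deficit forces $L(x_1)=L(x_2)$. So each $N_i$ is deficient either only on diagonal pairs, or on at most one pair altogether. With this exclusivity, direct termwise summation gives at least $204$ for $m=3$ and $5107$ for $m=4$.

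\textbf{Case (2).} This case is only a plan (``we would enumerate the intersection patterns''). Its surplus step is also misidentified. $S_i+xy$ is a $4$-cycle, for which Lemma~\ref{lem: DPconnection} with the greedy count guarantees only $15+2=17<18=P(C_4,3)$. The $6$-cycle $S_i\cup S_j$ controls $\sum_{(c,d)}N_i(c,d)N_j(c,d)$, not $\sum_{(c,d)}N_i(c,d)$. The usable surplus comes from the path itself: an edge of $S_i$ with unequal lists gives $P(S_i,L_i)\ge (m^2-m+1)(m-1)^2$, i.e.\ $28$ or $117$. Even so, the Karamata step as in Lemma~\ref{lem: 155amgm} (at most $m$ entries equal to $a_m-1$) followed by AM-GM gives only about $178$ for $m=3$.

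\textbf{The missing idea.} Write $\Pi_i=\prod_{(c,d)\in L(x)\times L(y)}N_i(c,d)$. The paper applies your observation (b) to a \emph{single} nonconstant path $S_1$, with no split on whether $L(x)=L(y)$. It uses only the crude bounds $\Pi_2,\Pi_3\ge 2^3\cdot 3^6$ (resp.\ $6^4\cdot 7^{12}$) for the other two paths.
\begin{itemize}
\item If $L(x_1)\neq L(x_2)$, at most one entry of $N_1$ equals $a_m-1$. Karamata with the surplus then gives $\Pi_1\ge 2\cdot 3^7\cdot 5$ (resp.\ $6\cdot 7^{14}\cdot 13$).
\item If $L(x_1)=L(x_2)$, reverse $S_1$ if needed so that some $z\in L(x)-L(x_1)$ exists. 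Then at most $m-1$ entries are deficient, and $N_1(z,d)\ge 4$ (resp.\ $9$) for every $d\in L(y)$. This gives $\Pi_1\ge 2^2\cdot 4^3\cdot 3^4$ (resp.\ $6^3\cdot 9^4\cdot 7^9$).
\end{itemize}
AM-GM over all $m^2$ terms then yields $P(G,L)>186$ (resp.\ $>4980$). So configurations with several nonconstant paths and spread-out deficits are not an obstacle: only one path's product has to be improved.
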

\begin{proof}
It is easy to calculate that $P(G,3)=186$ and $P(G,4)=4980$. Suppose $L$ is an arbitrary $m$-assignment of $G$, where $m \in \{3,4\}$. We must show $P(G,L) \geq 186$ if $m=3$ and $P(G,L) \geq 4980$ if $m=4$.

If $L$ is a constant list assignment, the result immediately follows. So, we may assume $L$ is nonconstant. By symmetry, we may without loss of generality assume $L_1$ is nonconstant. Recall that the internal vertices of $S_1$ are $x_1$ and $x_2$. We have
$$P(S_1,L_1)=\sum_{(c,d) \in L(x) \times L(y)}N_1(c,d).$$
By Lemma~\ref{lem: even path simple}, when $m=3$ each term of this sum is at least $2$, with at most three terms equal to $2$, and when $m=4$ each term is at least $6$, with at most four terms equal to $6$.

Since $L_1$ is nonconstant, there must be an edge $e$ of $S_1$ such that the lists corresponding to the endpoints of $e$ are different. Thus, there are at least $m(m-1)+1$ ways to properly color $e$. Each such coloring can be greedily extended to a proper $L_1$-coloring of $S_1$ in at least $(m-1)^2$ ways. Therefore, $P(S_1,L_1)\geq 7\cdot 4=28$ when $m=3$, and $P(S_1,L_1)\geq 13\cdot 9=117$ when $m=4$.

First, suppose $L(x_1)\neq L(x_2)$. When $m=3$, at most one term of $\sum_{(c,d)\in L(x)\times L(y)}N_1(c,d)$ can equal $2$, since this can only happen for a pair $(q,s)\in L(x)\times L(y)$ satisfying $L(x_1)-\{q\}=L(x_2)-\{s\}$. Similarly, when $m=4$, at most one term can equal $6$. Using Karamata's inequality (see~\cite[Page 11]{MOA11}) applied to the convex function $\phi(x)=-\log(x)$, together with the lower bounds on $P(S_1,L_1)$ above, we obtain
$$\prod_{(c,d)\in L(x)\times L(y)}N_1(c,d)\geq 2\cdot 3^7\cdot(28-2-7(3))=2\cdot 3^7\cdot 5$$
when $m=3$, and
$$\prod_{(c,d)\in L(x)\times L(y)}N_1(c,d)\geq 6\cdot 7^{14}\cdot(117-6-14(7))=6\cdot 7^{14}\cdot 13$$
when $m=4$.

Now, suppose $L(x_1)=L(x_2)$. Since $L_1$ is nonconstant, by reversing $S_1$ if necessary, we may assume $L(x)\neq L(x_1)$. Choose $z\in L(x)-L(x_1)$. When $m=3$, at most two terms of $\sum_{(c,d)\in L(x)\times L(y)}N_1(c,d)$ can equal $2$, since this can only happen when $c=d\in L(x_1)$. Moreover, for each $d\in L(y)$, $N_1(z,d)\geq4$ since $z\notin L(x_1)$. Thus, among the nine terms in the sum, at most two are equal to $2$, three terms are at least $4$, and all remaining terms are at least $3$. Consequently,
$$\prod_{(c,d)\in L(x)\times L(y)}N_1(c,d)\geq 2^2\cdot 4^3\cdot 3^4.$$

Similarly, when $m=4$, at most three terms can equal $6$, and for each $d\in L(y)$, $N_1(z,d)\geq9$. Thus,
$$\prod_{(c,d)\in L(x)\times L(y)}N_1(c,d)\geq 6^3\cdot 9^4\cdot 7^9.$$

Since $2\cdot 3^7\cdot 5>2^2\cdot 4^3\cdot 3^4$ and $6^3\cdot 9^4\cdot 7^9>6\cdot 7^{14}\cdot 13$, we have
$$\prod_{(c,d)\in L(x)\times L(y)}N_1(c,d)\geq 2^2\cdot 4^3\cdot 3^4$$
when $m=3$, and
$$\prod_{(c,d)\in L(x)\times L(y)}N_1(c,d)\geq 6\cdot 7^{14}\cdot 13$$
when $m=4$.

By Lemma~\ref{lem: sum},
$$P(G,L)=\sum_{(c,d)\in L(x)\times L(y)}N_1(c,d)N_2(c,d)N_3(c,d).$$

Suppose $m=3$. By Lemma~\ref{lem: even path simple}, for each $i\in\{2,3\}$, at most three of the factors $N_i(c,d)$ can equal $2$, while all remaining factors are at least $3$. Thus,
$$\prod_{(c,d)\in L(x)\times L(y)}N_i(c,d)\geq 2^3\cdot 3^6.$$
Therefore, by the AM-GM inequality,
$$P(G,L)\geq 9\left(2^2\cdot 4^3\cdot 3^4\cdot 2^6\cdot 3^{12}\right)^{1/9}>186=P(G,3).$$

Now, suppose $m=4$. By Lemma~\ref{lem: even path simple}, for each $i\in\{2,3\}$, at most four of the factors $N_i(c,d)$ can equal $6$, while all remaining factors are at least $7$. Thus,
$$\prod_{(c,d)\in L(x)\times L(y)}N_i(c,d)\geq 6^4\cdot 7^{12}.$$
Therefore, by the AM-GM inequality,
$$P(G,L)\geq 16\left(6\cdot 7^{14}\cdot 13\cdot 6^8\cdot 7^{24}\right)^{1/16}>4980=P(G,4).$$
\end{proof}

\section{Finishing the Proof} \label{finish}

In this section, we complete the proof of Theorem~\ref{thm: cone}. As noted in Section~\ref{intro}, if $G = \Theta(l_1,l_2,l_3)$ is enumeratively chromatic-choosable, then the desired result follows from Theorem~\ref{thm:examples}. Thus, by Theorem~\ref{thm: characterize}, we may assume throughout this section that $G=\Theta(l_1,l_2,l_3)$ is not enumeratively chromatic-choosable; that is, $l_1$, $l_2$, and $l_3$ all have the same parity and $\{l_1,l_2,l_3\}\neq\{2\}$. It follows that $G$ is bipartite. Throughout this section, we retain the notation and conventions introduced in Section~\ref{almost}. Let $X$ and $Y$ denote the partite sets of $G$. Without loss of generality, assume $|X|\geq |Y|$. It is easy to see that $|V(G)|=l_1+l_2+l_3-1$ and $|Y|\geq (|V(G)|-1)/2$.

Throughout this section, let $H=K_1\vee G$, and let $w$ denote the universal vertex used to construct $H$. It suffices to prove the following theorem.

\begin{theorem} \label{thm: conetheta}
Suppose $G=\Theta(l_1,l_2,l_3)$, where $l_1$, $l_2$, and $l_3$ all have the same parity and $\{l_1,l_2,l_3\}\neq\{2\}$. If $H=K_1\vee G$, then $H$ is enumeratively chromatic-choosable.
\end{theorem}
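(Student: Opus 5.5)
We first dispose of all $m\neq 3$. Since $G$ is bipartite and contains an edge, $\chi(H)=3$, so $P(H,m)=0$ for $m\le 2$ and the equality $P_\ell(H,m)=P(H,m)$ is trivial there. For $m\ge 4$, Section~\ref{almost} gives $P_\ell(G,m-1)=P(G,m-1)$ because $m-1\ge 3$. This uses Lemma~\ref{lem: DPchunk}, Statements~(i)--(ii), and Lemmas~\ref{lem: 155amgm}, \ref{lem: 244amgm}, \ref{lem: 3s}. Note that Lemma~\ref{lem: DPchunk} only covers $\Theta(1,5,5)$, $\Theta(2,4,4)$, $\Theta(3,3,3)$ for $m\ge 5$, and the three lemmas cover $m\in\{3,4\}$. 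Theorem~\ref{thm:examples}\ref{examplese} then lifts this to $P_\ell(H,m)=P(H,m)$. So the whole proof reduces to showing $P(H,L)\ge P(H,3)$ for every $3$-assignment $L$ of $H$.

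Next we compute the target. Any proper $3$-coloring of $H$ gives $w$ one color and properly $2$-colors the connected bipartite graph $G$ with the other two colors. Hence
\[
P(H,3)=3\cdot P(G,2)=3\cdot 2=6 .
\]
So it suffices to show that every $3$-assignment $L$ of $H$ admits at least $6$ proper $L$-colorings. We plan to get this from the enumerative Combinatorial Nullstellensatz / Alon--Tarsi count: if $D$ is an orientation of $H$ with $EE(D)\neq EO(D)$ and every outdegree is at most $2$, then the number of proper $L$-colorings is at least the minimum of $\prod_v (|L(v)|-t_v)$-type quantities. The precise statement we intend to invoke is the Alon--Füredi-type bound: the number of nonzeros of the graph polynomial on the grid $\prod L(v)$ is at least the minimum over admissible degree reductions. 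A cleaner route is to find a monomial $\prod_v x_v^{t_v}$ with $\sum t_v=|E(H)|$, $t_v\le 2$, and nonzero coefficient. Then, after reducing the remaining slack, we use the quantitative version: the number of colorings is at least $\prod_v (3-t_v)$ minimized over suitable monomials.

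The main obstacle is to choose $t_v$ with a nonzero coefficient and $\prod(3-t_v)\ge 6$. Since $|E(H)|=|E(G)|+|V(G)|=(l_1+l_2+l_3)+(l_1+l_2+l_3-1)$ and $|V(H)|=l_1+l_2+l_3$, the average outdegree is just under $2$. We therefore need all but about one vertex to have outdegree $2$. The product $\prod(3-t_v)$ is then only $1$ or $2$, which falls short of $6$. So we instead use the Alon--Füredi lower bound on nonzeros of a polynomial over a grid, applied after fixing the color of $w$. For each $c\in L(w)$, $G$ must be colored from $L(v)-\{c\}$, lists of size $\ge 2$. For a bipartite theta graph with $2$-lists, we use an orientation of $G$ with outdegree $\le 1$ and $EE\neq EO$. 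Such an orientation exists: $G$ has $|E|=|V|+1$, so one vertex gets outdegree $2$. To handle that, we orient so that the degree-$2$ vertex is one whose list avoids $c$ for some choice. Concretely, we orient $H$ with $w$ having outdegree $0$, each vertex of $Y$ pointing toward $w$, and $G$ oriented to balance. We then check $EE\neq EO$ by the standard eulerian-subgraph parity count, exploiting that all three paths have the same parity. Finally, we sum over the at least three colors of $w$, aiming to show at least two colorings for each $c$; this gives $3\cdot 2=6$. Where some $L(v)-\{c\}$ has size $3$ the count only increases.

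The hardest step will be verifying the nonzero coefficient (the $EE\neq EO$ computation) and guaranteeing two colorings for every $c$, rather than just one. Our first attempt is to use the slack vertices of $X$ (since $|X|\ge|Y|$) to absorb the extra edge, and to apply the Alon--Füredi bound with exponents chosen so that one vertex of $Y$ has two free choices.
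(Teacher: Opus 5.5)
Your reduction to $m=3$ (via Section~\ref{almost} and Theorem~\ref{thm:examples}\ref{examplese}) and the value $P(H,3)=6$ match the paper. Your per-color plan also works in one case. Suppose every color $c\in L(w)$ is missing from at least two lists of $G$. Then after deleting $c$ at least two vertices of $G$ keep $3$-lists. Colorability with one $3$-list and all other lists of size $2$ (Proposition~\ref{pro: one3list}) shows the graph polynomial of $G$ does not vanish on the grid. Alon--F\"{u}redi with $\sum_i q_i\ge -(n+1)+(2n+2)=n+1$ then gives at least $2$ colorings. This is exactly the paper's Lemma~\ref{lem: twocase}.

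The gap is that the claim ``at least two colorings for every $c\in L(w)$'' is false in general, so the sum $3\cdot 2$ cannot be reached this way. If $c$ lies in every list of $G$, then $L(v)-\{c\}$ is an arbitrary $2$-assignment of $G$, which may admit no coloring at all. Take $G=\Theta(1,3,3)$ with ends $x,y$ and interior vertices $a_1,a_2$ of $S_2$ and $b_1,b_2$ of $S_3$. Let $L(w)=\{4,5,6\}$ and $L(v)=A_v\cup\{4\}$, where $A_x=A_y=\{1,2\}$, $A_{a_1}=A_{b_2}=\{1,3\}$, and $A_{a_2}=A_{b_1}=\{2,3\}$. Then no proper $L$-coloring of $H$ gives $w$ the color $4$. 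Even for $\Theta(2,2,2k)$ with $k\ge 2$, which is $2$-choosable, $P_\ell(G,2)=1$.

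Your own count $|E(G)|=|V(G)|+1$ explains this. With only $2$-lists no admissible Alon--Tarsi orientation exists. With exactly one $3$-list, Alon--F\"{u}redi only yields $\prod q_i\ge 1$.

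So whenever some color of $L(w)$ is missing from at most one list of $G$ (the paper's cases $h(3)\in\{0,1\}$), you must let surplus colorings at some colors of $w$ compensate the deficit at another. Your proposal has no mechanism for this. The paper handles these cases by direct counting. It counts colorings in which an entire partite class $X$ or $Y$ of $G$ receives one common color. Removing that class leaves a star $K_{1,|X|}$ or $K_{1,|Y|}$, whose colorings are counted exactly by Observation~\ref{obs: star}. When $h(3)=1$, this is supplemented by one coloring with $w$ colored $3$, supplied by Proposition~\ref{pro: one3list}.
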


By the results of Section~\ref{almost}, we have $P_{\ell}(G,m)=P(G,m)$ whenever $m\geq3$. Thus, by Theorem~\ref{thm:examples}, $P_{\ell}(H,m)=P(H,m)$ whenever $m\geq4$. Since $G$ is connected and bipartite, $P(H,3)=6$. Therefore, to prove Theorem~\ref{thm: conetheta}, it remains only to show that $P_{\ell}(H,3)=6$. To do this, we will use the Alon--Tarsi Theorem and an enumerative version of the Combinatorial Nullstellensatz.

\begin{theorem}[\cite{AT92}] \label{thm: AandT}
    Let $D$ be an orientation of the simple graph $G$. Suppose that $L$ is a list assignment for $G$ such that $|L(y)| \geq d^+_D(y) + 1$ for every $y \in V(G)$. If the numbers of even and odd circulations contained in $D$ differ, then $G$ is $L$-colorable.
\end{theorem}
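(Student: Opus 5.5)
The plan is to deduce the statement from the Combinatorial Nullstellensatz applied to the graph polynomial of $G$ with respect to $D$, namely
\[
f_D(\mathbf{x}) \;=\; \prod_{(u,v) \in A(D)} (x_u - x_v),
\]
where the product runs over the arcs of $D$, each arc $(u,v)$ being directed from $u$ to $v$. An assignment $c:V(G)\to \mathbb{R}$ (or any field containing all the colors) is a proper coloring exactly when $f_D(c)\neq 0$. The polynomial $f_D$ is homogeneous of degree $|E(G)| = \sum_{y} d^+_D(y)$. So if the coefficient of the monomial $\prod_{y} x_y^{d^+_D(y)}$ is nonzero, the Combinatorial Nullstellensatz applies: for any sets $L(y)$ with $|L(y)| > d^+_D(y)$ it gives a point $c \in \prod_y L(y)$ with $f_D(c) \neq 0$, which is a proper $L$-coloring. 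The whole proof therefore reduces to identifying that coefficient.

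To compute the coefficient, I would expand the product. Each term of the expansion chooses, for every arc $(u,v)$, either $x_u$ (with sign $+$) or $-x_v$. Record this choice as an orientation $D'$ of $G$: the edge $uv$ is oriented toward whichever endpoint was \emph{not} chosen, so the chosen variable is always the tail in $D'$. The resulting monomial is then $\prod_y x_y^{d^+_{D'}(y)}$, and its sign is $(-1)^{r}$, where $r$ is the number of arcs reversed in passing from $D$ to $D'$. Hence the coefficient of $\prod_y x_y^{d^+_D(y)}$ is the signed count $\sum_{D'} (-1)^{|D \triangle D'|}$, taken over all orientations $D'$ of $G$ with the same out-degree sequence as $D$.

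Next I would biject these orientations $D'$ with circulations (spanning Eulerian subdigraphs) of $D$. Send $D'$ to the set $R$ of arcs of $D$ that $D'$ reverses. Reversing $R$ leaves every out-degree unchanged exactly when each vertex has equal in- and out-degree within $R$, that is, exactly when $R$ is a circulation in $D$. Conversely, reversing any circulation of $D$ produces a valid $D'$. Under this bijection the sign $(-1)^{|R|}$ is $+1$ for even circulations and $-1$ for odd ones. Therefore the coefficient equals the number of even circulations minus the number of odd circulations, which is nonzero by hypothesis, and the Nullstellensatz step finishes the proof.

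The only delicate point is the sign and orientation bookkeeping in the expansion step, specifically checking that equal out-degree sequences correspond exactly to the reversed arc set being Eulerian. The rest is a direct application of the Combinatorial Nullstellensatz.
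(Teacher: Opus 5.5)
The paper does not prove this theorem; it quotes it from Alon and Tarsi. Your argument is correct and is the standard proof: the $D$-oriented graph polynomial, the Combinatorial Nullstellensatz, and the identification of the coefficient of $\prod_y x_y^{d^+_D(y)}$ with the number of even circulations minus the number of odd circulations, via the set of reversed arcs. The only blemish is notational: read literally as a symmetric difference of arc sets, $|D\triangle D'|$ equals $2r$, so you should write the sign as $(-1)^{r}$, where $r$ is the number of edges on which $D$ and $D'$ disagree (Alon and Tarsi's $|D\oplus D'|$). Your preceding sentence makes clear this is what you mean.
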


\begin{theorem} [\cite{AF93}] \label{thm: AandF} 
Let $\mathbb{F}$ be an arbitrary field, let $A_1$, $A_2$, $\ldots$, $A_n$ be any non-empty subsets of $\mathbb{F}$, and let $B = \prod_{i=1}^n A_i$.  Suppose that $P \in \mathbb{F}[c_1, \ldots, c_n]$ is a polynomial of degree $d$ that does not vanish on all of $B$.  Then, the number of points in $B$ for which $P$ has a non-zero value is at least $\min \prod_{i=1}^n q_i$ where the minimum is taken over all integers $q_i$ such that $1 \leq q_i \leq |A_i|$ and $\sum_{i=1}^n q_i \geq -d + \sum_{i=1}^n |A_i|$.
\end{theorem}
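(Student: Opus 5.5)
We would prove the statement by induction on $n$, after first reducing $P$ so that its degree in each variable is small. This reduction is what later lets us pass from ``nonzero polynomial'' to ``does not vanish on the grid''.

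\emph{Reduction.} For each $i\in[n]$ let $g_i(c_i)=\prod_{a\in A_i}(c_i-a)$. In every monomial of $P$, repeatedly replace $c_i^{|A_i|}$ by $c_i^{|A_i|}-g_i(c_i)$, which has lower degree. Each replacement does not increase the total degree, and since $g_i$ vanishes on $A_i$ it does not change any value of $P$ on $B$. We obtain a polynomial $\bar P$ with three properties: $\deg \bar P\le d$, $\deg_{c_i}\bar P\le |A_i|-1$ for every $i$, and $\bar P$ agrees with $P$ on $B$. Since the lower bound is monotone decreasing in $d$, it suffices to prove the statement for $\bar P$ with its own degree. We also record the standard fact (the Combinatorial Nullstellensatz, or a direct induction on $n$): a polynomial with $\deg_{c_i}<|A_i|$ for all $i$ that vanishes on all of $B$ is the zero polynomial. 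In particular $\bar P\neq 0$.

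\emph{Induction.} For $n=1$, $\bar P$ is a nonzero univariate polynomial of degree at most $\min\{d,|A_1|-1\}$. It therefore has at most $d$ roots in $A_1$, so it is nonzero on at least $\max\{1,|A_1|-d\}$ points. This equals the minimum of $q_1$ over $1\le q_1\le|A_1|$ with $q_1\ge |A_1|-d$.

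For $n\ge2$, write
\[
\bar P=\sum_{j=0}^{k} P_j(c_1,\dots,c_{n-1})\,c_n^{\,j},
\]
where $k=\deg_{c_n}\bar P\le |A_n|-1$ and $P_k\neq0$. Then $\deg P_k\le d-k$, and $\deg_{c_i}P_k<|A_i|$ for $i<n$. By the fact recorded above, $P_k$ does not vanish on $B'=\prod_{i<n}A_i$. The induction hypothesis then gives at least $\prod_{i<n}q_i$ points $b'\in B'$ with $P_k(b')\neq0$, for some integers $1\le q_i\le|A_i|$ with
\[
\sum_{i<n}q_i\ge \sum_{i<n}|A_i|-(d-k).
\]
For each such $b'$, the univariate polynomial $\bar P(b',c_n)$ has degree exactly $k$. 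It is therefore nonzero at at least $|A_n|-k$ points of $A_n$.

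Set $q_n=|A_n|-k$. Then $1\le q_n\le |A_n|$, and
\[
\sum_{i=1}^n q_i\ge \sum_{i=1}^n|A_i|-d.
\]
So $\bar P$, and hence $P$, is nonzero at at least $\prod_{i=1}^n q_i$ points of $B$, which is at least the stated minimum.

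The main point needing care is the reduction step. We must check that the replacements never raise the total degree, and that the leading coefficient $P_k$ really is nonvanishing on $B'$; this second check is exactly where the condition $\deg_{c_i}<|A_i|$ is used. Everything else in the argument is bookkeeping of the constraint $\sum q_i\ge -d+\sum|A_i|$.
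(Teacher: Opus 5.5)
Your proof is correct: the degree reduction, the leading coefficient $P_k$ in $c_n$ (nonvanishing on $\prod_{i<n}A_i$ by the Nullstellensatz-type lemma, with $\deg P_k\le d-k$), the choice $q_n=|A_n|-k\in[1,|A_n|]$, and the monotonicity-in-$d$ bookkeeping all check out. The paper does not prove this theorem but quotes it from Alon and F\"uredi~\cite{AF93}, and your argument is essentially the standard inductive proof of that result.
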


We will use Theorem~\ref{thm: AandT} to establish the following proposition.

\begin{pro} \label{pro: one3list}
Suppose $G=\Theta(l_1,l_2,l_3)$, where $l_1$, $l_2$, and $l_3$ all have the same parity. Suppose $L$ is a list assignment for $G$ such that $|L(a)|=3$ for exactly one vertex $a\in V(G)$ and $|L(b)|=2$ for every $b\in V(G)-\{a\}$. Then $G$ is $L$-colorable.
\end{pro}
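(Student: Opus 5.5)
The plan is to apply Theorem~\ref{thm: AandT} directly. We need an orientation $D$ of $G$ with $d^+_D(a)\le 2$ and $d^+_D(b)\le 1$ for every $b\neq a$, and we need the numbers of even and odd circulations in $D$ to differ. The second requirement should come for free from the parity hypothesis, so the real content is building the orientation.

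\emph{Step 1: parity of circulations.} Every cycle of $G$ is the union of two of the paths $S_i,S_j$, so it has length $l_i+l_j$. This is even because $l_1,l_2,l_3$ all have the same parity; in particular $G$ is bipartite. Take any orientation $D$ of $G$. A circulation, that is, a spanning subgraph with in-degree equal to out-degree at every vertex, decomposes into edge-disjoint directed cycles. Each of these has even length, so every circulation has an even number of edges. Hence $D$ has no odd circulations. It has at least one even circulation, namely the empty one. So the two counts differ for \emph{every} orientation $D$.

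\emph{Step 2: constructing the orientation.} Note that $|E(G)|=l_1+l_2+l_3=|V(G)|+1$, so the degree bound forces $d^+(a)=2$ and $d^+(b)=1$ for all other $b$.
\begin{enumerate}
\item Pick any edge $e=aa'$ incident to $a$. Every edge of a theta graph lies on a cycle, so $G-e$ is connected. It has $|V(G)|$ edges, so it is a connected unicyclic graph.
\item Orient the unique cycle of $G-e$ cyclically. Orient every remaining edge of $G-e$ toward the cycle, i.e.\ along the unique path to the cycle. Then every vertex has out-degree exactly $1$ in $G-e$.
\item Orient $e$ from $a$ to $a'$. This gives $d^+_D(a)=2$ and leaves all other out-degrees equal to $1$.
\end{enumerate}

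\emph{Step 3: conclusion.} We now have $|L(a)|=3\ge d^+_D(a)+1$ and $|L(b)|=2\ge d^+_D(b)+1$ for every $b\ne a$. By Step~1 the even and odd circulation counts differ. Theorem~\ref{thm: AandT} then gives that $G$ is $L$-colorable.

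The only point I expect to need care is the unicyclic orientation in Step~2, and even that is routine. The parity observation in Step~1 carries the main weight and makes the circulation count trivial.
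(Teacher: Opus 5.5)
Your proof is correct, but it is organized differently from the paper's.

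The paper splits into two cases according to where $a$ lies. If $a$ is an internal vertex of a path, say $S_1$, it does not use Theorem~\ref{thm: AandT} at all. It $L$-colors the even cycle $S_2\cup S_3$, using the classical fact that even cycles are $2$-choosable. It then colors $S_1$ greedily inward from $x$ and $y$, coloring $a$ last, since $a$ has three colors and only two neighbors.

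Only when $a\in\{x,y\}$ does the paper invoke Alon--Tarsi. There it orients $S_1$ and $S_2$ from $x$ to $y$ and $S_3$ from $y$ to $x$. This is exactly the orientation your construction produces when $a=x$, $e$ is the edge of $S_1$ at $x$, and the cycle $S_2\cup S_3$ of $G-e$ is oriented from $x$ to $y$ along $S_2$.

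Your unicyclic construction handles every position of $a$ at once. It also makes clear that the argument uses only connectivity, $|E(G)|=|V(G)|+1$, the fact that $a$ lies on a cycle, and bipartiteness. The paper's first case gives up this uniformity in exchange for a more elementary greedy argument.

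Your remark that the degree bounds force $d^+_D(a)=2$ and $d^+_D(b)=1$ is correct but not needed.
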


\begin{proof}
First, suppose $a\in V(G)-\{x,y\}$. Without loss of generality, suppose $a\in V(S_1)$. Let $G'=S_2\cup S_3$, and let $L'$ be the restriction of $L$ to $V(G')$. Since $l_2$ and $l_3$ have the same parity, $G'$ is an even cycle, and hence $G'$ is $L'$-colorable. Let $h$ be a proper $L'$-coloring of $G'$. Starting from $x$ and $y$, we may greedily color the vertices of $S_1$ toward $a$. Since every vertex of $S_1$ other than $a$ has a list of size $2$ and $|L(a)|=3$, we can then color $a$ with a color different from the colors of its neighbors. Thus, there is a proper $L_1$-coloring of $S_1$ that agrees with $h$ on $x$ and $y$, and together these colorings form a proper $L$-coloring of $G$.

Now, suppose $a\in\{x,y\}$. By symmetry, we may assume $a=x$. Let $D$ be the orientation of $G$ obtained by orienting the edges of $S_1$ and $S_2$ from $x$ to $y$ and the edges of $S_3$ from $y$ to $x$. Then $d_D^+(x)=2$ and $d_D^+(b)=1$ for each $b\in V(G)-\{x\}$. Thus, $|L(v)|\geq d_D^+(v)+1$ for every $v\in V(G)$. Since $G$ is bipartite, $D$ has no odd circulations.  $D$ also clearly has at least one even circulation. Therefore, by Theorem~\ref{thm: AandT}, $G$ is $L$-colorable.
\end{proof}

Now, let $L$ be an arbitrary $3$-assignment for $H$. By relabeling colors if necessary, we may assume $L(w)=[3]$. For each $i\in[3]$, let $h(i)=|\{v\in V(G):i\notin L(v)\}|$. Without loss of generality, suppose $h(1)\geq h(2)\geq h(3)$. If $h(1)+h(2)+h(3)=0$, then $L(v)=[3]$ for every $v\in V(H)$, and hence $P(H,L)=P(H,3)=6$. Thus, we may assume $h(1)+h(2)+h(3)\geq1$. We complete the proof by showing $P(H,L) \geq 6$ in each of the following cases: $h(3)\geq2$, $h(3)=1$, and $h(3)=0$.

\begin{lemma} \label{lem: twocase}
If $h(3) \ge 2$, then $P(H, L) \ge 6$.
\end{lemma}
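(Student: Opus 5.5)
We color the universal vertex $w$ first and then count extensions to $G$. For each $i\in[3]$, let $L_i$ be the list assignment for $G$ given by $L_i(v)=L(v)-\{i\}$. A proper $L$-coloring of $H$ that colors $w$ with $i$ is exactly a proper $L_i$-coloring of $G$. Hence
\[
P(H,L)=\sum_{i=1}^3 P(G,L_i).
\]
It suffices to show that $P(G,L_i)\geq 2$ for each $i\in[3]$.

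Fix $i\in[3]$. Every $v\in V(G)$ satisfies $|L_i(v)|\geq 2$, and $|L_i(v)|=3$ exactly when $i\notin L(v)$. Since $h(1)\geq h(2)\geq h(3)\geq 2$, we have $h(i)\geq 2$, so there are two distinct vertices $a,b\in V(G)$ with $|L_i(a)|=|L_i(b)|=3$.

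We produce two different proper $L_i$-colorings of $G$ using Proposition~\ref{pro: one3list}, applied twice.
\begin{enumerate}
\item Define a list assignment $L'$ by $L'(a)=L_i(a)$, and for every $v\neq a$ let $L'(v)$ be any $2$-element subset of $L_i(v)$. Then $a$ is the unique vertex with a $3$-list and all other lists have size $2$. By Proposition~\ref{pro: one3list}, $G$ has a proper $L'$-coloring $f_1$, which is also a proper $L_i$-coloring.
\item Define $L''$ to agree with $L'$ except that $L''(b)=L_i(b)-\{f_1(b)\}$. This set has size $2$ because $|L_i(b)|=3$. Again $a$ is the unique vertex with a $3$-list, so Proposition~\ref{pro: one3list} gives a proper $L''$-coloring $f_2$ of $G$.
\end{enumerate}
Since $f_2(b)\neq f_1(b)$, the colorings $f_1$ and $f_2$ are distinct proper $L_i$-colorings of $G$. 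Therefore $P(G,L_i)\geq 2$.

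Summing over $i\in[3]$ gives $P(H,L)\geq 6$.

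The only point needing care is that Proposition~\ref{pro: one3list} requires exactly one $3$-list and all other lists of size exactly $2$. The trimming in both steps guarantees this. The hypothesis $h(3)\geq 2$ is what supplies the second vertex $b$, for every $i$, whose spare color lets us force $f_2\neq f_1$.
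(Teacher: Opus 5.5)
Your proof is correct, but it gets $P(G,L_i)\ge 2$ by a different route than the paper.

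The paper forms the graph polynomial $F=\prod_{s>q,\,v_qv_s\in E(G)}(a_s-a_q)$, which has degree $|E(G)|=n+1$. It uses Proposition~\ref{pro: one3list} only to certify that $F$ does not vanish on the grid $B=\prod_i L_j(v_i)$. It then applies the Alon--F\"uredi theorem (Theorem~\ref{thm: AandF}). Since two lists have size $3$, $\sum_i |L_j(v_i)|\ge 2n+2$. So every admissible $(q_1,\dots,q_n)$ has $\sum q_i\ge n+1$, which forces some $q_i\ge 2$ and hence at least two nonvanishing points.

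You instead apply the proposition constructively, twice. You get distinctness explicitly by deleting $f_1(b)$ from the $3$-list at the second vertex $b$. The points you flag all check out: exactly one $3$-list at $a$, every other list of size exactly $2$, and $L''(v)\subseteq L_i(v)$, so $f_2$ is a proper $L_i$-coloring.

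Your argument is more elementary and self-contained. This lemma is the only place the paper uses Theorem~\ref{thm: AandF}, so your version would make the enumerative Nullstellensatz unnecessary for the whole paper. The Alon--Tarsi theorem is still needed inside Proposition~\ref{pro: one3list}.

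The paper's approach buys scalability. When more lists have size $3$, the Alon--F\"uredi bound grows roughly exponentially in the number of extra $3$-lists, whereas your trimming trick as written gives only $2$. That is, of course, all this lemma requires.
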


\begin{proof}
Since $h(1)\geq h(2)\geq h(3)\geq2$, we have $h(i)\geq2$ for each $i\in[3]$. For each $i\in[3]$, let $\mathcal{C}_i$ be the set of proper $L$-colorings of $H$ that color $w$ with $i$, and let $L_i(v)=L(v)-\{i\}$ for each $v\in V(G)$. Since $L(w)=[3]$, we have $|\mathcal{C}_i|=P(G,L_i)$ for each $i\in[3]$. Therefore,
$$P(H,L)=\sum_{i=1}^3|\mathcal{C}_i|=\sum_{i=1}^3P(G,L_i).$$
It suffices to show that $P(G,L_i)\geq2$ for each $i\in[3]$.

Fix a $j\in[3]$. We will show $P(G,L_j)\geq2$. Arbitrarily name the vertices of $G$ so that $V(G)=\{v_1,\ldots,v_n\}$. Note that $n=l_1+l_2+l_3-1$. Let $F\in\mathbb{R}[a_1,\ldots,a_n]$ be given by
$$F(a_1,\ldots,a_n)=\prod_{\substack{s>q,\\ v_qv_s\in E(G)}}(a_s-a_q).$$
Observe that $F$ is homogeneous of degree $n+1$. Suppose $B=\prod_{i=1}^nL_j(v_i)$. Notice that every proper $L_j$-coloring of $G$ corresponds to an element of $B$ where $F$ is nonzero. Since $h(j)\geq2$, at least two of the lists corresponding to $L_j$ have size $3$, and $|L_j(v)|\geq2$ for each $v\in V(G)$. By restricting all but one of the lists of size $3$ to lists of size $2$, Proposition~\ref{pro: one3list} implies that $F$ does not vanish on all of $B$.

By Theorem~\ref{thm: AandF}, the number of points in $B$ for which $F$ has a nonzero value is at least $\min\prod_{i=1}^n q_i$, where the minimum is taken over all integers $q_i$ such that $1\leq q_i\leq |L_j(v_i)|$ for each $i\in[n]$ and
$$\sum_{i=1}^n q_i\geq -(n+1)+\sum_{i=1}^n|L_j(v_i)|\geq n+1.$$
It is easy to see from these conditions that $\min\prod_{i=1}^n q_i\geq2$. Therefore, $P(G,L_j)\geq2$. Since $j\in[3]$ was arbitrary, $P(G,L_i)\geq2$ for each $i\in[3]$.  Thus, $P(H,L)\geq6$.
\end{proof}

We now turn to the cases $h(3)=1$ and $h(3)=0$. We will use the following simple observation about list colorings of stars.

\begin{obs} \label{obs: star}
Suppose $G=K_{1,n}$ with bipartition $\{a\},\{b_1,\ldots,b_n\}$. If $L$ is a list assignment for $G$ with $c\in L(a)$, then the number of proper $L$-colorings that color $a$ with $c$ is $\prod_{i=1}^n |L(b_i)-\{c\}|.$
\end{obs}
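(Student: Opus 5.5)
Observation~\ref{obs: star} is an elementary counting fact, so the plan is a direct product-rule argument rather than any appeal to the heavier tools above. Fix the assignment of color $c$ to the center $a$; this is permitted since $c\in L(a)$. The only edges of $K_{1,n}$ are the edges $ab_i$, so once $a$ is colored $c$, a map $f$ with $f(a)=c$ is a proper $L$-coloring exactly when $f(b_i)\in L(b_i)$ and $f(b_i)\neq c$ for every $i\in[n]$. In other words, $f(b_i)$ must lie in $L(b_i)-\{c\}$.

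The key point is that the leaves $b_1,\ldots,b_n$ are pairwise nonadjacent, since the set $\{b_1,\ldots,b_n\}$ is a partite set. Hence no constraint couples the choices at different leaves, and the proper $L$-colorings with $f(a)=c$ are in bijection with the Cartesian product $\prod_{i=1}^n \bigl(L(b_i)-\{c\}\bigr)$, via $f\mapsto (f(b_1),\ldots,f(b_n))$. Injectivity holds because $f(a)$ is fixed. Surjectivity holds because every tuple from the product defines a coloring in which each edge $ab_i$ receives distinct colors and there are no other edges to check. Taking cardinalities then gives $\prod_{i=1}^n |L(b_i)-\{c\}|$.

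There is no real obstacle here. The only care needed is to note that the count is correct even when some $L(b_i)-\{c\}$ is empty: in that case the product is $0$, and indeed no proper coloring with $f(a)=c$ exists. The degenerate case $n=0$ is also fine, with an empty product of $1$ matching the single coloring of $a$ by $c$.
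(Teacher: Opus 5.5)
Your proof is correct. The paper states this as a simple observation and gives no proof; your argument supplies exactly the implicit reasoning. Once $a$ is colored $c$, the leaves are pairwise nonadjacent and each is constrained only to $L(b_i)-\{c\}$, so the product rule gives the count.
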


\begin{lemma} \label{lem: onecase}
If $h(3)=1$, then $P(H,L)\geq6$.
\end{lemma}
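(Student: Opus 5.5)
We keep the decomposition from the proof of Lemma~\ref{lem: twocase}: with $L_i(v)=L(v)-\{i\}$,
$$P(H,L)=\sum_{i=1}^3 P(G,L_i).$$
Let $u$ be the unique vertex of $G$ with $3\notin L(u)$. Then $L_3(u)=L(u)$ has size $3$, and every other vertex has $|L_3(v)|=2$. By Proposition~\ref{pro: one3list}, $P(G,L_3)\ge 1$. Since $h(1)\ge h(2)\ge 1$, each of $L_1$ and $L_2$ has at least one list of size $3$ and all other lists of size $2$, so Proposition~\ref{pro: one3list} also gives $P(G,L_1),P(G,L_2)\ge1$.

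The main tool for sharper bounds is the enumerative Nullstellensatz, Theorem~\ref{thm: AandF}, applied to the same polynomial $F$ of degree $n+1$ as in Lemma~\ref{lem: twocase}. If $L_j$ has $k=h(j)$ lists of size $3$, the constraint becomes $\sum q_i\ge n+k-1$ with $q_i\le 2$ or $q_i\le 3$ accordingly. A short optimization over integer vectors should give $P(G,L_j)\ge 2$ when $k\ge2$, and a larger bound (at least $3$ or $4$) when $k$ is larger. I will record this as a small counting claim, namely a lower bound $g(k)$ for $\min\prod q_i$, and use it to dispose of all cases where $h(1)$ is large. For example, $h(2)\ge2$ gives $P(G,L_1)+P(G,L_2)\ge4$, so it then suffices to get $P(G,L_3)\ge2$, or an extra unit from $L_1$.

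The remaining work is to prove $P(G,L_3)\ge2$ directly, or an equivalent extra coloring, in the small cases. I would do this using the structure of $u$. Fix a proper $L_3$-coloring $f$ of $G$, which exists by Proposition~\ref{pro: one3list}. Since $|L(u)|=3$ and $u$ has at most $3$ neighbors, I try to recolor $u$: if the neighbors of $u$ use at most $1$ color from $L(u)$ under $f$, then $u$ has a second option and we are done.

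Otherwise, look at the $2$-lists near $u$ and use the bipartite structure. Consider the orientation used in Proposition~\ref{pro: one3list}, with out-degree $1$ everywhere except at $u$. I would run Alon--Tarsi twice, on two different restrictions of $L(u)$ to $2$-element subsets combined with a recoloring of a neighbor. Alternatively, I would apply Theorem~\ref{thm: AandF} to $F$ multiplied by a linear factor $(a_u-\gamma)$, where $\gamma=f(u)$. This produces a coloring with $u$ colored differently from $f$, provided that this product does not vanish on $B$.

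Observation~\ref{obs: star} will be used at $w$: whenever one of the $P(G,L_i)$ is small, it helps to count the colorings in which $w$ takes color $i$ via a star centered at $u$ with its neighbors. Choosing $u$'s color first and then counting completions lets us trade a deficit in one $P(G,L_i)$ against a surplus in another.

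I expect the main obstacle to be the extremal subcase $h(1)=h(2)=h(3)=1$, where each $L_i$ has exactly one $3$-list and Theorem~\ref{thm: AandF} yields only $1$ per term. There I need to show that at least one of the three terms is at least $2$ and the total is at least $6$. I would first split according to whether the three deficient vertices coincide. If they coincide, then $L(u)$ contains none of $1,2,3$, and $u$ has a full list of $3$ new colors in every $L_i$. If they are distinct, a vertex missing color $i$ keeps a $2$-list $[3]-\{i\}$ in $L_j$ for $j\ne i$, which forces structure on the colorings. In each subcase I would try to exhibit two distinct colorings per $i$ by recoloring $u$ as above.
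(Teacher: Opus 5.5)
Your decomposition $P(H,L)=\sum_{i=1}^3P(G,L_i)$ is exact, and the Alon--F\"uredi bound you sketch is correct: it gives $\min\prod q_i=1,2,3,6$ for $k=1,2,3,4$ lists of size $3$. But it only settles the cases with $g(h(1))+g(h(2))\geq 5$. In \emph{every} remaining case, namely $(h(1),h(2))\in\{(1,1),(2,1),(3,1),(2,2)\}$ and not only $(1,1)$, you need $P(G,L')\geq 2$ for assignments $L'$ with exactly one list of size $3$ and all other lists of size $2$. This is exactly what the proposal leaves as ``remaining work,'' and none of the suggested tools delivers it.
\begin{itemize}
\item If $L(u)$ is cut down to a $2$-subset, every list has size $2$. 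Theorem~\ref{thm: AandT} is then unavailable, because $|E(G)|=|V(G)|+1$ forces a vertex of out-degree at least $2$ in every orientation. You also never say what the recoloring of a neighbor is supposed to accomplish.
\item Applying Theorem~\ref{thm: AandF} to $F\cdot(a_u-\gamma)$ requires this polynomial to be nonzero somewhere on $B$. That is precisely the assertion that some proper coloring gives $u$ a color other than $\gamma$, so the step is circular.
\item Recoloring $u$ only works when the neighbors of $u$ see at most one color of $L(u)$.
\item Observation~\ref{obs: star} applied to a star centered at $u$ does not count colorings of $G$.
\end{itemize}
As written, the proposal reduces the lemma to an unproved claim rather than proving it.

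The paper instead colors an entire partite class of $G$ with a single color, which is possible because that color lies in every list of the class. What remains of $H$ is a star centered at $w$, and Observation~\ref{obs: star} counts its colorings exactly. With $b$ the unique vertex missing $3$, say $b\in Y$:
\begin{itemize}
\item If $\{1,2\}\subseteq L(v)$ for every $v\in Y$, coloring all of $Y$ with $1$, or all of $Y$ with $2$, gives at least $4+4$ colorings.
\item Otherwise, coloring all of $X$ with $3$ gives at least $5$ colorings, and Proposition~\ref{pro: one3list} supplies one more with $w$ colored $3$.
\end{itemize}
If you want to keep your decomposition, you must actually prove the claim. For $u$ internal to some $S_i$ it is easy: the other two paths form an even cycle, which has at least two colorings from $2$-lists by Theorem~\ref{thm:examples}, and each extends greedily as in the proof of Proposition~\ref{pro: one3list}. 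For $u\in\{x,y\}$ you need a genuinely new argument. One option uses the coefficient of $a_u^2\prod_{v\neq u}a_v$ in $F$, which is $\pm3$, since the orientation in Proposition~\ref{pro: one3list} has exactly three Eulerian subdigraphs, all even. By the coefficient formula of Laso\'n and Karasev--Petrov, with the colors treated as indeterminates, a unique coloring would force this coefficient to be $\pm1$. Once the claim holds, restricting lists gives $P(G,L_i)\geq 2$ for each $i$, so $P(H,L)\geq 6$.
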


\begin{proof}
We first suppose that $3\in L(v)$ for each $v\in X$, and let $b\in Y$ be the unique vertex such that $3\notin L(b)$. The proof when the unique vertex whose list does not contain $3$ belongs to $X$ is analogous.

First, suppose $\{1,2\}\subseteq L(y)$ for each $y\in Y$. For each $i\in[2]$, let $\mathcal{C}_{(Y,i)}$ be the set of proper $L$-colorings of $H$ that color each vertex in $Y$ with $i$. Since $\mathcal{C}_{(Y,1)}$ and $\mathcal{C}_{(Y,2)}$ are disjoint, $P(H,L)\geq|\mathcal{C}_{(Y,1)}|+|\mathcal{C}_{(Y,2)}|$.

Suppose each vertex in $Y$ is colored with $2$. Let $L'(v)=L(v)-\{2\}$ for each $v\in V(H-Y)$. Notice that a proper $L'$-coloring of $H-Y$ completes a proper $L$-coloring of $H$. Since $h(2)\geq h(3)=1$ and $\{1,2\}\subseteq L(y)$ for each $y\in Y$, there is a $z\in X$ such that $|L'(z)|=3$. Since $H-Y=K_{1,|X|}$, Observation~\ref{obs: star} implies there are at least $2$ proper $L'$-colorings of $H-Y$ that color $w$ with $1$ and at least $2$ proper $L'$-colorings of $H-Y$ that color $w$ with $3$. Consequently, $|\mathcal{C}_{(Y,2)}|\geq4$. A similar argument shows that $|\mathcal{C}_{(Y,1)}|\geq4$. Thus, $P(H,L)\geq6$.

Now, suppose there is a $z\in Y$ such that $\{1,2\}\not\subseteq L(z)$; note that it is possible that $z=b$. Let $\mathcal{C}_{(X,3)}$ be the set of proper $L$-colorings of $H$ that color each vertex in $X$ with $3$, and let $\mathcal{C}_{(w,3)}$ be the set of proper $L$-colorings of $H$ that color $w$ with $3$. Since $\mathcal{C}_{(X,3)}$ and $\mathcal{C}_{(w,3)}$ are disjoint, $P(H,L)\geq|\mathcal{C}_{(X,3)}|+|\mathcal{C}_{(w,3)}|$.

If we color $w$ with $3$ and let $L'(v)=L(v)-\{3\}$ for each $v\in V(G)$, then exactly one list corresponding to $L'$ has size $3$, while all remaining lists have size $2$. Therefore, Proposition~\ref{pro: one3list} implies that there is at least one way to complete a proper $L$-coloring of $H$ when $w$ is colored with $3$. Hence, $|\mathcal{C}_{(w,3)}|\geq1$. It remains to show that $|\mathcal{C}_{(X,3)}|\geq5$.

Suppose each vertex in $X$ is colored with $3$, and let $L'(v)=L(v)-\{3\}$ for each $v\in V(H-X)$. Notice that a proper $L'$-coloring of $H-X$ completes a proper $L$-coloring of $H$. If $z=b$, consider $|L'(b)-\{1\}|$ and $|L'(b)-\{2\}|$. Both numbers are at least $2$, and at least one is $3$. Since $H-X=K_{1,|Y|}$, Observation~\ref{obs: star} implies there are at least $|L'(b)-\{1\}|$ proper $L'$-colorings of $H-X$ that color $w$ with $1$ and at least $|L'(b)-\{2\}|$ proper $L'$-colorings of $H-X$ that color $w$ with $2$. Consequently, $|\mathcal{C}_{(X,3)}|\geq5$.

If $z\neq b$, consider $|L'(b)-\{j\}|$ for each $j\in[2]$. Both of these numbers are at least $2$. Also, $|L'(z)-\{1\}|$ and $|L'(z)-\{2\}|$ are both at least $1$, and at least one is $2$. Since $H-X=K_{1,|Y|}$, Observation~\ref{obs: star} implies there are at least $(|L'(b)-\{1\}|)(|L'(z)-\{1\}|)$ proper $L'$-colorings of $H-X$ that color $w$ with $1$ and at least $(|L'(b)-\{2\}|)(|L'(z)-\{2\}|)$ proper $L'$-colorings of $H-X$ that color $w$ with $2$. Consequently,
$$|\mathcal{C}_{(X,3)}|\geq(2)(2)+(2)(1)=6.$$
Thus, $P(H,L)\geq6$.
\end{proof}

\begin{lemma} \label{lem: zerocase}
If $h(3)=0$, then $P(H,L)\geq6$.
\end{lemma}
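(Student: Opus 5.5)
The plan is to count colorings directly by conditioning on the color of $w$, without the polynomial method. Since $h(3)=0$, we have $3\in L(v)$ for every $v\in V(G)$, while $h(1)\geq 1$ because $h(1)+h(2)+h(3)\geq1$ and $h(1)$ is largest. For $c\in[3]$, let $\mathcal{C}_c$ be the set of proper $L$-colorings of $H$ that color $w$ with $c$. These sets are disjoint, so $P(H,L)=|\mathcal{C}_1|+|\mathcal{C}_2|+|\mathcal{C}_3|$. Inside each $\mathcal{C}_c$ we will exhibit colorings in which one whole partite set receives a single color. Then $G$ minus that partite set is edgeless, and by Observation~\ref{obs: star} (applied to the star $H-X$ or $H-Y$) the number of completions is a product of list sizes. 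The argument splits into the cases $h(2)=0$ and $h(2)\geq1$.

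\textbf{Case $h(2)=0$.} Here $\{2,3\}\subseteq L(v)$ for every $v\in V(G)$. Fix $c\in[3]$ and let $\{a,b\}\subseteq[3]-\{c\}$ be a pair of colors contained in every list. For $c=1$ take $\{a,b\}=\{2,3\}$. For $c=2$ or $c=3$, note that the other one of $2,3$ lies in every list.
\begin{itemize}
\item If $c=1$, coloring $X$ with $2$ and $Y$ with $3$, or $X$ with $3$ and $Y$ with $2$, gives two colorings in $\mathcal{C}_1$.
\item If $c=2$, color $X$ entirely with $3$ and color each $y\in Y$ from $L(y)-\{2,3\}$, which is nonempty. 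Symmetrically, color $Y$ entirely with $3$ and each $x\in X$ from $L(x)-\{2,3\}$. This gives at least two distinct colorings in $\mathcal{C}_2$.
\item The same construction with the roles of $2$ and $3$ swapped gives $|\mathcal{C}_3|\geq2$.
\end{itemize}
Hence $P(H,L)\geq6$.

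\textbf{Case $h(2)\geq1$ (so also $h(1)\geq1$).} Here we only use $\mathcal{C}_1$ and $\mathcal{C}_2$. For $c\in\{1,2\}$, let $A_c$ be the number of colorings in $\mathcal{C}_c$ with every vertex of $X$ colored $3$, and $B_c$ the number with every vertex of $Y$ colored $3$. These two families are disjoint because $G$ has an edge between $X$ and $Y$. By Observation~\ref{obs: star},
\[
A_c=\prod_{y\in Y}|L(y)-\{3,c\}|\quad\text{and}\quad B_c=\prod_{x\in X}|L(x)-\{3,c\}|,
\]
and each factor is at least $1$ because $3\in L(v)$ and $|L(v)|=3$. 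Since $h(c)\geq1$, some vertex $v$ has $c\notin L(v)$, and then $|L(v)-\{3,c\}|=2$. So whichever of $A_c$ or $B_c$ has $v$ in its index set is at least $2$, and $|\mathcal{C}_c|\geq A_c+B_c\geq3$. Doing this for $c=1$ and $c=2$ gives $P(H,L)\geq 3+3=6$.

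The main points to check carefully are disjointness and that the right sets are nonempty. The colorings counted in $A_c$ and in $B_c$ must be genuinely distinct, and the sets $L(v)-\{3,c\}$ must be nonempty; both follow from $3\in L(v)$ and $|L(v)|=3$. I do not expect a serious obstacle, since $h(3)=0$ makes color $3$ universally available, so the Alon--Tarsi machinery used in Lemma~\ref{lem: twocase} is not needed here.
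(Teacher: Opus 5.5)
Your proof is correct and uses essentially the paper's approach: split on $h(2)=0$ versus $h(2)\geq 1$, colour one whole partite set with the universally available colour $3$, and count the completions with the star observation. Your bookkeeping $A_c+B_c\geq 1+2$ for each $c\in\{1,2\}$ handles in one line what the paper splits into subcases according to where the vertices missing colours $1$ and $2$ lie. The one blemish is the opening sentence of the $h(2)=0$ case, which posits a pair $\{a,b\}\subseteq[3]-\{c\}$ contained in every list. That is false for $c\in\{2,3\}$, because colour $1$ is missing from some list, but your bullet-point constructions never use it.
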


\begin{proof}
Since $h(3)=0$, $3\in L(v)$ for each $v\in V(H)$. We may suppose $h(1)\geq1$, since if $h(1)=0$, then $L$ assigns $[3]$ to every vertex of $H$ and $P(H,L)=P(H,3)=6$.

We first show that $P(H,L)\geq6$ in the case that $h(2)=0$. Let $\mathcal{C}_{(w,j)}$ be the set of all proper $L$-colorings of $H$ that color $w$ with $j$ for each $j\in[3]$. Since $\mathcal{C}_{(w,1)}$, $\mathcal{C}_{(w,2)}$, and $\mathcal{C}_{(w,3)}$ are pairwise disjoint, $P(H,L)\geq\sum_{i=1}^3|\mathcal{C}_{(w,i)}|$. We can construct a proper $L$-coloring as follows: first color $w$ with $2$, then color each vertex in $X$ (resp. $Y$) with $3$, and finally color each $v\in Y$ (resp. $v\in X$) with an element of $L(v)-\{2,3\}$. Thus, $|\mathcal{C}_{(w,2)}|\geq2$. A similar argument shows that $|\mathcal{C}_{(w,3)}|\geq2$. Finally, a proper $L$-coloring can be obtained by coloring $w$ with $1$, every vertex in $X$ (resp. $Y$) with $2$, and every vertex in $Y$ (resp. $X$) with $3$. Thus, $|\mathcal{C}_{(w,1)}|\geq2$. Consequently, $P(H,L)\geq6$.

We will now show that $P(H,L)\geq6$ in the case $h(2)\geq1$. Suppose $q\in V(H-w)$ is such that $2\notin L(q)$. Also, suppose $z\in V(H-w)$ is such that $1\notin L(z)$; note that it could be that $z=q$. Let $\mathcal{C}_{(X,3)}$ be the set of proper $L$-colorings of $H$ that color every vertex in $X$ with $3$, and let $\mathcal{C}_{(Y,3)}$ be the set of proper $L$-colorings of $H$ that color every vertex in $Y$ with $3$. Since $\mathcal{C}_{(X,3)}$ and $\mathcal{C}_{(Y,3)}$ are disjoint, $P(H,L)\geq|\mathcal{C}_{(X,3)}|+|\mathcal{C}_{(Y,3)}|$.

First, suppose $z=q$, and suppose $z\in X$; the case that $z\in Y$ is similar. Color each vertex in $Y$ with $3$, and let $L'(v)=L(v)-\{3\}$ for each $v\in X\cup\{w\}$. Notice that a proper $L'$-coloring of $H-Y$ completes a proper $L$-coloring of $H$. If $w$ is colored with $j$ for some $j\in[2]$, then since $H-Y=K_{1,|X|}$, Observation~\ref{obs: star} implies there are at least $|L'(z)-\{j\}|$ ways to complete a proper $L$-coloring of $H$ that colors $w$ with $j$ and every vertex in $Y$ with $3$. Since $|L'(z)-\{1\}|\geq2$ and $|L'(z)-\{2\}|\geq2$, we have $|\mathcal{C}_{(Y,3)}|\geq4$.

Now, color each vertex in $X$ with $3$, and let $L''(v)=L(v)-\{3\}$ for each $v\in Y\cup\{w\}$. Note that $L''$ is a $2$-assignment for $H-X$. Since $H-X$ is a star and hence chordal, Theorem~\ref{thm:examples} implies that $H-X$ is enumeratively chromatic-choosable. Thus, there are at least $2$ proper $L''$-colorings of $H-X$. Notice that a proper $L''$-coloring of $H-X$ completes a proper $L$-coloring of $H$. Thus, $|\mathcal{C}_{(X,3)}|\geq2$, and consequently $P(H,L)\geq6$.

Now, suppose $z\neq q$. First, consider the case where $z,q\in X$; the case that $z,q\in Y$ is similar. Color each vertex in $Y$ with $3$, and let $L'(v)=L(v)-\{3\}$ for each $v\in X\cup\{w\}$. Notice that a proper $L'$-coloring of $H-Y$ completes a proper $L$-coloring of $H$ in which every vertex in $Y$ is colored with $3$. Since $1\notin L'(z)$, Observation~\ref{obs: star} implies there are at least $2$ such colorings when $w$ is colored with $1$. Similarly, since $2\notin L'(q)$, there are at least $2$ such colorings when $w$ is colored with $2$. Thus, $|\mathcal{C}_{(Y,3)}|\geq4$.

Now, color each vertex in $X$ with $3$, and let $L''(v)=L(v)-\{3\}$ for each $v\in Y\cup\{w\}$. Note that $L''$ is a $2$-assignment for $H-X$. Since $H-X$ is a star and hence chordal, Theorem~\ref{thm:examples} implies that $H-X$ is enumeratively chromatic-choosable. Thus, there are at least $2$ proper $L''$-colorings of $H-X$. Notice that a proper $L''$-coloring of $H-X$ completes a proper $L$-coloring of $H$ in which every vertex in $X$ is colored with $3$. Consequently, $P(H,L)\geq|\mathcal{C}_{(X,3)}|+|\mathcal{C}_{(Y,3)}|\geq6$.

Finally, consider the case when $z\in X$ and $q\in Y$; the case that $z\in Y$ and $q\in X$ is similar. Color each vertex in $X$ with $3$. Let $L'(v)=L(v)-\{3\}$ for each $v\in Y\cup\{w\}$. Clearly, $|L'(v)|=2$ for each $v\in Y\cup\{w\}$. Notice that a proper $L'$-coloring of $H-X$ completes a proper $L$-coloring of $H$ in which every vertex in $X$ is colored with $3$. Since $H-X=K_{1,|Y|}$ and $2\notin L'(q)$, Observation~\ref{obs: star} implies there are at least $2$ proper $L'$-colorings of $H-X$ when $w$ is colored with $2$ and at least $1$ proper $L'$-coloring when $w$ is colored with $1$. Thus, $|\mathcal{C}_{(X,3)}|\geq3$.

Now, color each vertex in $Y$ with $3$. Let $L''(v)=L(v)-\{3\}$ for each $v\in X\cup\{w\}$. Clearly, $|L''(v)|=2$ for each $v\in X\cup\{w\}$. Notice that a proper $L''$-coloring of $H-Y$ completes a proper $L$-coloring of $H$ in which every vertex in $Y$ is colored with $3$. Since $H-Y=K_{1,|X|}$ and $1\notin L''(z)$, Observation~\ref{obs: star} implies there are at least $2$ proper $L''$-colorings of $H-Y$ when $w$ is colored with $1$ and at least $1$ proper $L''$-coloring when $w$ is colored with $2$. Thus, $|\mathcal{C}_{(Y,3)}|\geq3$. Consequently, $P(H,L)\geq|\mathcal{C}_{(X,3)}|+|\mathcal{C}_{(Y,3)}|\geq6.$
\end{proof}

Lemmas~\ref{lem: twocase}, \ref{lem: onecase}, and~\ref{lem: zerocase} show that $P(H,L)\geq6$ for every $3$-assignment $L$ of $H$. Hence, $P_{\ell}(H,3)\geq6$, and we have $P_{\ell}(H,3)=P(H,3)$. Together with the discussion preceding Theorem~\ref{thm: conetheta}, this proves Theorem~\ref{thm: conetheta}, and hence Theorem~\ref{thm: cone}.

\paragraph{Declaration of generative AI and AI-assisted technologies in
the manuscript preparation process.}
During the preparation of this manuscript, the authors used ChatGPT
(OpenAI) for limited language editing and manuscript checking. The authors take full responsibility for the final content of the manuscript.

\paragraph{Acknowledgment.} The authors would like to thank Hemanshu Kaul for helpful conversations.  This project was completed by the Coloring Research Group of South Alabama at the University of South Alabama during the spring 2026, summer 2026, and fall 2026 semesters.  The support of the University of South Alabama is gratefully acknowledged.

\bibliographystyle{hplain}
\bibliography{thetabib}{}

@article{AF93,
    author = {N. Alon and Z. F\"{u}redi},
    title = {Covering the cube by affine hyperplanes},
    journal = {European J. Combin.},
    volume = {14},
    year = {1993},
    pages = {79--83}
}

@article{AM25,
  author  = {S. Allred and J. A. Mudrock},
  title   = {Enumerative chromatic choosability},
  journal = {Electron. J. Combin.},
  volume  = {33},
  number  = {4},
  year    = {2026},
  pages   = {Paper No. P4.6},
  doi     = {10.37236/14176},
  url     = {https://doi.org/10.37236/14176}
}

@article{AS90,
  author = {Kostochka, A. V. and Sidorenko, A. F.},
  title = {Problems Proposed at the Problem Session of the Prachatice Conference on Graph Theory},
  journal = {Ann. Discrete Math.},
  volume = {51},
  pages = {375--384},
  year = {1992},
  doi = {10.1016/S0167-5060(08)70659-9},
  url = {https://doi.org/10.1016/S0167-5060(08)70659-9}
}

@article{AT92,
  author  = {N. Alon and M. Tarsi},
  title   = {Colorings and Orientations of Graphs},
  journal = {Combinatorica},
  volume  = {12},
  number  = {2},
  year    = {1992},
  pages   = {125--134},
  doi     = {10.1007/BF01204715}
}

@book {B94,
    AUTHOR = {Biggs, N.},
     TITLE = {Algebraic graph theory},
    SERIES = {Cambridge Mathematical Library},
   EDITION = {Second},
 PUBLISHER = {Cambridge University Press, Cambridge},
      YEAR = {1993},
     PAGES = {viii+205},
      ISBN = {0-521-45897-8},
   MRCLASS = {05C50},
  MRNUMBER = {1271140},
MRREVIEWER = {Robin\ J.\ Wilson},
}

@article {B12,
    AUTHOR = {Birkhoff, G. D.},
     TITLE = {A determinant formula for the number of ways of coloring a
              map},
   JOURNAL = {Ann. of Math. (2)},
  FJOURNAL = {Annals of Mathematics. Second Series},
    VOLUME = {14},
      YEAR = {1912},
    NUMBER = {1-4},
     PAGES = {42--46},
      ISSN = {0003-486X,1939-8980},
   MRCLASS = {99-04},
  MRNUMBER = {1502436},
       DOI = {10.2307/1967597},
       URL = {https://doi.org/10.2307/1967597},
}

@article {BK23,
    AUTHOR = {Bui, M. V. and Kaul, H. and Maxfield, M. and
              Mudrock, J. A. and Shin, P. and Thomason, S.},
     TITLE = {Non-chromatic-adherence of the {DP} color function via
              generalized theta graphs},
   JOURNAL = {Graphs Combin.},
  FJOURNAL = {Graphs and Combinatorics},
    VOLUME = {39},
      YEAR = {2023},
    NUMBER = {3},
     PAGES = {Paper No. 42},
      ISSN = {0911-0119,1435-5914},
   MRCLASS = {05C15 (05A20 05C30 05C69)},
  MRNUMBER = {4570386},
       DOI = {10.1007/s00373-023-02633-z},
       URL = {https://doi.org/10.1007/s00373-023-02633-z},
}

@article {CL26,
    AUTHOR = {Chi, Y. and Lee, S. and Morrissette, F. and Mudrock, J. A. and Nguyen, G. and Whatley, B.},
    TITLE = {Enumeratively chromatic-choosable theta graphs},
    JOURNAL = {Enumerative Combinatorics and Applications},
    VOLUME = {6},
    YEAR = {2026},
    NUMBER = {3},
    PAGES = {Article \#S2R23},
    DOI = {10.54550/ECA2026V6S3R23},
    URL = {https://doi.org/10.54550/ECA2026V6S3R23},
}

@article {DZ22,
    AUTHOR = {Dong, F. and Zhang, M.},
     TITLE = {An improved lower bound of {$P(G, L)- P(G, k)$} for
              {$k$}-assignments {$L$}},
   JOURNAL = {J. Combin. Theory Ser. B},
  FJOURNAL = {Journal of Combinatorial Theory. Series B},
    VOLUME = {161},
      YEAR = {2023},
     PAGES = {109--119},
      ISSN = {0095-8956,1096-0902},
   MRCLASS = {05C31 (05C15)},
  MRNUMBER = {4553004},
       DOI = {10.1016/j.jctb.2023.02.002},
       URL = {https://doi.org/10.1016/j.jctb.2023.02.002},
}

@article {D92,
    AUTHOR = {Donner, Q.},
     TITLE = {On the number of list-colorings},
   JOURNAL = {J. Graph Theory},
  FJOURNAL = {Journal of Graph Theory},
    VOLUME = {16},
      YEAR = {1992},
    NUMBER = {3},
     PAGES = {239--245},
      ISSN = {0364-9024,1097-0118},
   MRCLASS = {05C15},
  MRNUMBER = {1168583},
MRREVIEWER = {N.\ Z.\ Li},
       DOI = {10.1002/jgt.3190160307},
       URL = {https://doi.org/10.1002/jgt.3190160307},
}

@article{DP15,
    TITLE = {Correspondence coloring and its application to list-coloring planar graphs without cycles of lengths 4 to 8},
    JOURNAL = {J. Combin. Theory Ser. B},
    FJOURNAL = {Journal of Combinatorial Theory, Series B},
    VOLUME = {129},
    PAGES = {38--54},
    YEAR = {2018},
    ISSN = {0095-8956},
    DOI = {10.1016/j.jctb.2017.09.001},
    URL = {https://doi.org/10.1016/j.jctb.2017.09.001},
    AUTHOR = {Z. Dvo\v{r}\'{a}k and L. Postle}
}

@inproceedings {ET79,
    AUTHOR = {Erd\H{o}s, P. and Rubin, A. L. and Taylor, H.},
     TITLE = {Choosability in graphs},
 BOOKTITLE = {Proceedings of the {W}est {C}oast {C}onference on
              {C}ombinatorics, {G}raph {T}heory and {C}omputing ({H}umboldt
              {S}tate {U}niv., {A}rcata, {C}alif., 1979)},
    SERIES = {Congress. Numer.},
    VOLUME = {XXVI},
     PAGES = {125--157},
 PUBLISHER = {Utilitas Math., Winnipeg, MB},
      YEAR = {1980},
      ISBN = {0-919628-26-5},
   MRCLASS = {05C15},
  MRNUMBER = {593902},
MRREVIEWER = {R.\ C.\ Entringer},
}

@misc{DZ26,
  author    = {M. Zhang and F. Dong},
  title     = {Non-persistence of equality between chromatic polynomials
and list-color functions},
  howpublished   = {arXiv:2608.19773},
  year      = {2026},
  note      = {preprint}
}

@article {HC92,
    AUTHOR = {H\"{a}ggkvist, R. and Chetwynd, A.},
     TITLE = {Some upper bounds on the total and list chromatic numbers of
              multigraphs},
   JOURNAL = {J. Graph Theory},
  FJOURNAL = {Journal of Graph Theory},
    VOLUME = {16},
      YEAR = {1992},
    NUMBER = {5},
     PAGES = {503--516},
      ISSN = {0364-9024,1097-0118},
   MRCLASS = {05C15},
  MRNUMBER = {1185013},
MRREVIEWER = {Hugh\ Hind},
       DOI = {10.1002/jgt.3190160510},
       URL = {https://doi.org/10.1002/jgt.3190160510},
}

@article {KK23,
    AUTHOR = {Kaul, H. and Kumar, A. and Liu, A. and Mudrock,
              J. A. and Rewers, P. and Shin, P. and Tanahara,
              M. S. and To, K.},
     TITLE = {Bounding the list color function threshold from above},
   JOURNAL = {Involve},
  FJOURNAL = {Involve. A Journal of Mathematics},
    VOLUME = {16},
      YEAR = {2023},
    NUMBER = {5},
     PAGES = {849--882},
      ISSN = {1944-4176,1944-4184},
   MRCLASS = {05C15 (05C30)},
  MRNUMBER = {4677029},
       DOI = {10.2140/involve.2023.16.849},
       URL = {https://doi.org/10.2140/involve.2023.16.849},
}

@article{KM18,
  author = {H. Kaul and J. A. Mudrock},
  title = {Criticality, the list color function, and list coloring the {C}artesian product of graphs},
  journal = {J. Comb.},
  fjournal = {Journal of Combinatorics},
  volume = {12},
  year = {2021},
  number = {3},
  pages = {479--514},
  doi = {10.4310/joc.2021.v12.n3.a4},
  url = {https://doi.org/10.4310/joc.2021.v12.n3.a4}
}

@article{KM19,
  author = {H. Kaul and J. A. Mudrock},
  title = {On the chromatic polynomial and counting {DP}-colorings of graphs},
  journal = {Adv. in Appl. Math.},
  volume = {123},
  year = {2021},
  pages = {102131},
  doi = {10.1016/j.aam.2020.102131},
  url = {https://doi.org/10.1016/j.aam.2020.102131}
}

@article {KN16,
    AUTHOR = {Kirov, R. and Naimi, R.},
     TITLE = {List coloring and {$n$}-monophilic graphs},
   JOURNAL = {Ars Combin.},
  FJOURNAL = {Ars Combinatoria. A Canadian Journal of Combinatorics},
    VOLUME = {124},
      YEAR = {2016},
     PAGES = {329--340},
      ISSN = {0381-7032,2817-5204},
   MRCLASS = {05C15},
  MRNUMBER = {3468819},
}

@book {MOA11,
    AUTHOR = {Marshall, A. W. and Olkin, I. and Arnold, B. C.},
     TITLE = {Inequalities: theory of majorization and its applications},
    SERIES = {Springer Series in Statistics},
   EDITION = {Second},
 PUBLISHER = {Springer, New York},
      YEAR = {2011},
     PAGES = {xxvii+909},
      ISBN = {978-0-387-40087-7},
}

@article {O02,
    AUTHOR = {Ohba, K.},
     TITLE = {On chromatic-choosable graphs},
   JOURNAL = {J. Graph Theory},
  FJOURNAL = {Journal of Graph Theory},
    VOLUME = {40},
      YEAR = {2002},
    NUMBER = {2},
     PAGES = {130--135},
      ISSN = {0364-9024,1097-0118},
   MRCLASS = {05C15},
  MRNUMBER = {1899118},
       DOI = {10.1002/jgt.10033},
       URL = {https://doi.org/10.1002/jgt.10033},
}

@article {T09,
    AUTHOR = {Thomassen, C.},
     TITLE = {The chromatic polynomial and list colorings},
   JOURNAL = {J. Combin. Theory Ser. B},
  FJOURNAL = {Journal of Combinatorial Theory. Series B},
    VOLUME = {99},
      YEAR = {2009},
    NUMBER = {2},
     PAGES = {474--479},
      ISSN = {0095-8956,1096-0902},
   MRCLASS = {05C15},
  MRNUMBER = {2482965},
MRREVIEWER = {Gek\ Ling\ Chia},
       DOI = {10.1016/j.jctb.2008.09.005},
       URL = {https://doi.org/10.1016/j.jctb.2008.09.005},
}

@article {V76,
    AUTHOR = {Vizing, V. G.},
    TITLE = {Coloring the vertices of a graph in prescribed colors},
    JOURNAL = {Diskret. Analiz},
    YEAR = {1976},
    NUMBER = {29},
    PAGES = {3--10},
    NOTE = {In Russian},
    MRCLASS = {05C15},
    MRNUMBER = {498216},
    MRREVIEWER = {D.\ Cvetkovi\'c},
}

@article{WQ17,
  author = {W. Wang and J. Qian and Z. Yan},
  title = {When does the list-coloring function of a graph equal its chromatic polynomial},
  journal = {J. Combin. Theory Ser. B},
  volume = {122},
  year = {2017},
  pages = {543--549},
  doi = {10.1016/j.jctb.2016.08.002},
  url = {https://doi.org/10.1016/j.jctb.2016.08.002}
}

@book {W01,
    AUTHOR = {West, D. B.},
    TITLE = {Introduction to graph theory},
    EDITION = {Second},
    PUBLISHER = {Prentice Hall, Inc., Upper Saddle River, NJ},
    YEAR = {2001},
    ISBN = {0-13-014400-2},
}
\vspace{-5mm}

\end{document}